\definecolor{limegreen}{rgb}{0.196,0.804,0.196}
\definecolor{darkgreen}{rgb}{0.0,0.5,0.0}
\definecolor{darkbluegreen}{rgb}{0,0.3,0.6}
\definecolor{badgerred}{rgb}{0.715,0.004,0.004}
\newcommand{\bd}{{\bar d}}
\newcommand{\bu}{{\bar u}}
\newcommand{\bv}{{\bar v}}
\newcommand{\cH}{\mathcal{H}}
\newcommand{\cC}{\mathcal{C}}
\newcommand{\cL}{\mathcal{L}}
\newcommand{\cN}{\mathcal{N}}
\newcommand{\cP}{\mathcal{P}}
\newcommand{\cI}{\mathcal{I}}
\newcommand{\Rm}{\mathrm{Rm}}
\newcommand{\ric}{\mathrm{Ric}}
\newcommand{\pd}{\partial}
\newcommand{\pdd}[1]{\frac\partial{\partial #1}}
\newcommand{\bdt}{{ \sigma(\tau)}}
\newcommand{\R}{{\mathbb R}}
\newcommand{\sk}{{\smallskip}}
\newcommand{\ds}{\displaystyle}
\newtheorem{theorem}{Theorem}[section]
\newtheorem{proposition}[theorem]{Proposition}
\newtheorem{lemma}[theorem]{Lemma}
\newtheorem{definition}[theorem]{Definition}
\newtheorem{prop}[theorem]{Proposition}
\newtheorem{corollary}[theorem]{Corollary}
\newtheorem{conjecture}[theorem]{Conjecture}
\theoremstyle{remark}
\newtheorem{remark}[theorem]{Remark}
\numberwithin{equation}{section}
\numberwithin{theorem}{section}
\begin{document}
\title[Asymptotics (\today)]
{Unique asymptotics of ancient compact non-collapsed solutions
to the\\
3-dimensional Ricci flow} \date{\today}
\author[Angenent]{Sigurd Angenent}
\address{Department of Mathematics, University of Wisconsin -- Madison}
\author[Daskalopoulos]{Panagiota Daskalopoulos}
\address{Department of Mathematics, Columbia University, New York}
\author[Sesum]{Natasa Sesum}
\address{Department of Mathematics, Rutgers University, New Jersey}

\thanks{
P.
Daskalopoulos thanks the NSF for support in DMS-1266172.
N.
Sesum thanks the NSF for support in DMS-1056387.
}

\begin{abstract}
  We consider compact noncollapsed ancient solutions to the 3-dimensional Ricci flow
  that are rotationally and reflection symmetric.  We prove that these solutions are either the spheres or they all have unique asymptotic behavior as $t\to -\infty$ and we give their precise asymptotic description.  This description applies in particular to the solution constructed by G.  Perelman in \cite{Pe1}.
\end{abstract}
\maketitle
\tableofcontents

\section{Introduction}
A solution to a geometric evolution equation such as the Ricci flow, the Mean Curvature Flow or the Yamabe flow is called \emph{ancient} if it exists for all $t\in(-\infty, t_0]$, for some $t_0 \leq +\infty$.  In the special case where the ancient solution
exists for all $t \in (-\infty, +\infty)$, it is called {\em eternal}.
For all these flows, the requirement that a solution should exist for \emph{all} times $t\leq t_0$, combined with some type of {\em positive curvature} condition or a suitable {\em non-collapsedness} assumption, turns out to be quite restrictive.  In a number of cases there are results which state that the list of possible ancient solutions to a given geometric flow consists of self similar solutions (``solitons'') and a shorter list of non self similar solutions.  Such classification
results play an important role in the singularity analysis of the flow, as blow up limits at a singularity give rise to an ancient solution.

\sk

Ancient compact solutions to the \emph{2-dimensional Ricci flow} were classified by Daskalopoulos, Hamilton and Sesum in \cite{DHS}.
It turns out that in this case, the complete list contains (up to conformal invariance) only the shrinking sphere solitons and the King solution.
The latter is a well known example of ancient {\em collapsed} Ricci flow solution and is given by an explicit formula.
It was first discovered by J.  King \cite{K1} in the context of the logarithmic fast-diffusion equation on $\R^2$ and later independently by P.  Rosenau \cite{R} in the same context.
It also appears as the {\em sausage model} in the context of quantum field theory, in the independent work of Fateev-Onofri-Zamolodchikov \cite{FOZ}.
Although the King ancient solution is not a soliton, it may be visualized as two steady solitons, called ``cigars'', coming from opposite spatial infinities, glued together.
Let us remark that the classification in \cite{DHS} deals with both {\em collapsed} and {\em non-collapsed} solutions.
Non-compact ancient (eternal) solutions to the 2-dimensional Ricci flow were classified by Daskalopoulos and Sesum in \cite{DS1} (see also in \cite{Ham3, Chu}).
It turns out that in this case the only eternal solutions with bounded curvature $R(\cdot,t)$ at each time slice, are the cigar solitons.  The results mentioned above classify all complete 2D-Ricci flow ancient solutions, both compact and non-compact.

\sk
Solutions analogous to the 2-dim Ricci flow King solution exist in the \emph{higher dimensional ($n \ge 3$) Yamabe flow} as well.  Again they are not solitons, although they are given in an explicit form
and were discovered by King \cite{K1} (and later independently by Brendle in a private communication with the authors).  However, in the case of the Yamabe flow many more solutions exist.  In \cite{DDKS1, DDKS2},
Daskalopoulos, del~Pino, King and Sesum constructed a five parameter family of type~I ancient solutions to the Yamabe flow on $S^n$, not given in closed form, however looking similar to the King solutions in \cite{K1}.
In fact the King solutions are part of this class of solutions that can be visualized as two shrinking solitons, coming from spatial infinities and glued together.
In addition in \cite{DDS}, Daskalopoulos, del~Pino, and Sesum constructed type~II compact ancient solutions to the Yamabe flow on $S^n$ (which they called a {\it tower of bubbles}).  Unlike the above mentioned type~I ancient solutions,
the Ricci curvature of the tower of bubbles solutions changes its sign (they still have nonnegative scalar curvature).
The above examples show that the classification of closed ancient solutions to the Yamabe flow is very difficult, if not impossible.

\sk

For \emph{Curve Shortening} (MCF for curves in the plane) Daskalopoulos, Hamilton, and Sesum \cite{DHS1} classified all ancient compact convex solutions by showing that the only possibilities are the shrinking circle and the paperclip solution.
%   In contrast with the 2-dim Ricci flow, the ``non-collapsedness'' condition (which implies convexity)
% is necessary even for CSF, due to other spiral-type collapsed ancient solutions which are broadly believed to exist.
% %(Work in progress: Zhang, Olson, Khan, and Angenent, forthcoming.)

The existence of higher dimensional ancient compact convex solutions was settled by White \cite{Wh} and then also by Haslhofer and Hershkovits \cite{HO} who constructed solutions that are not solitons, and for which no closed form seems to exist.  In \cite{AngFormal} formal asymptotics were derived for White and Haslhofer-Hershkovits' ancient solutions.  In \cite{ADS1} we showed that any ancient, rotationally symmetric, non-collapsed solution of MCF satisfies the {\em unique asymptotics as $t\to-\infty$.  }
The classification of such solutions was established in \cite{ADS},
where we show that every uniformly 2-convex ancient oval must be unique up to rotation, scaling, and translation and hence it must be the ancient oval solution constructed by White and by Haslhofer--Hershkovits (\cite{Wh,HO}).
This implies that every closed, uniformly 2-convex and non-collapsed ancient solution to the mean curvature flow must be either the family of contracting spheres or the unique, up to isometries, ancient oval.

The notion of {\em non-collapsedness} for \emph{mean convex Mean Curvature Flow} was introduced by X.J.~Wang (\cite{XW}) and subsequently was shown by Andrews (\cite{An}) to be preserved along MCF.
It is known (\cite{HK}) that all non-collapsed ancient compact solutions to MCF are convex.
The non-collapsedness condition turns out to be important in the classification of ancient compact convex solutions, as evidenced by the ``pancake type'' examples which become collapsed as $t\to -\infty$ (see \cite{BLT} and \cite{XW} for the existence of such solutions and \cite{BLT} for
a beautiful work on their classification under rotational symmetry).
On the other hand, ancient non compact non-collapsed uniformly 2-convex solutions were considered by Brendle and Choi in \cite{BC} and \cite{BC1}, where the authors show that any noncollapsed, uniformly 2-convex non compact ancient solution to the mean curvature flow must be a rotationally symmetric translating soliton, and hence the Bowl soliton, up to scaling and isometries.

\smallskip

In this paper we focus our attention on {\em 3-dimensional Ricci flow.  }
Consider an ancient compact solution to the 3-dimensional Ricci flow
\begin{equation}
  \label{eq-rf}
  \frac{\partial}{\partial t}g_{ij} = -2R_{ij},
\end{equation}
existing for $t\in (-\infty,T)$ so that it shrinks to a round point at $T$.

Perelman \cite{Pe1} established the existence of a rotationally symmetric ancient $\kappa$-noncollapsed solution on $S^3$.  This ancient solution is of type~II backward in time, in the sense that its scalar curvature satisfies
\[
\sup_{M\times (-\infty,0)} |t| \,|R(x,t)| = \infty
\]
In forward time the ancient solution forms a type~I singularity, as it shrinks to a round point.  Perelman's ancient solution has backward in time limits which are the Bryant soliton and the round cylinder $S^2\times \mathbb{R}$, depending on how the sequence of points and times about which one rescales are chosen.  These are the only backward in time limits of Perelman's ancient solution.
Let us remark that in contrast to the {\em collapsed } King ancient solution of the 2-dimensional Ricci flow, the Perelman ancient solution is
{\em non-collapsed}.  In fact there exist other ancient compact solutions to the 3-dimensional Ricci flow which are collapsed and the analogue of the King solution (see in \cite{Fa2}, \cite{BKN}).  Let us recall the notion of {\em $\kappa$-noncollapsed} metrics,
which was introduced by Perelman in \cite{Pe1}.

\begin{definition}[$\kappa$-noncollapsed property \cite{Pe1}]
  The metric $g$ is called {\em $\kappa$-noncollapsed} on the scale $\rho$, if every metric ball $B_r$ of radius $ r < \rho$ which satisfies
  $|\Rm | \leq r^{-2}$ on $B_r$ has volume at least $\kappa \, r^n$.
  For any \(\kappa>0\) an ancient Ricci flow solution is called $\kappa$-noncollapsed, if it is $\kappa$-noncollapsed on all scales $\rho$.
\end{definition}

It turns out that this is an important notion in the context of ancient solutions and singularities.  In fact, in \cite{Pe1} Perelman proved that every ancient solution arising as a blow-up limit at a singularity of the Ricci flow on compact manifolds is $\kappa$-noncollapsed on all scales for some
$\kappa >0$.  In this context, the following conjecture plays an important role in the classification of singularities.
% \red{Can an ancient oval be the blow-up of some solution that becomes singular in finite time? I don't think so.  If a solution that becomes singular at time $T$ has an ancient oval as blow-up then, since the ancient oval is very close to a sphere at some moment, there is a $t<T$ such that the original solution is very close to a sphere (1/4 pinched).  After that it will just shrink to a round point, and all blow-ups are the sphere soliton.  }

\begin{conjecture}[Perelman]
  \label{conj-perelman}
  Let $(S^3,g(t))$ be a compact ancient $\kappa$-noncollapsed solution to the Ricci flow \eqref{eq-rf} on $S^3$.  Then $g(t)$ is either a family of contracting spheres or Perelman's solution.
\end{conjecture}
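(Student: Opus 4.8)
I would prove the conjecture in three stages: (I) \emph{symmetrization}, reducing an arbitrary compact ancient $\kappa$-noncollapsed solution on $S^3$ that is not a round shrinking sphere to one that is rotationally and reflection symmetric; (II) \emph{asymptotics}, invoking the main theorem of the present paper to pin down the precise behaviour of such a solution as $t\to-\infty$; and (III) \emph{rigidity}, showing that this asymptotic behaviour determines the solution, so that it must coincide with Perelman's. Stage (II) is exactly what is proved here and may be taken as input; the work is in (I) and (III).

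\textbf{Stage I: rotational symmetry.} Since $g(t)$ is a compact ancient $\kappa$-noncollapsed solution, Hamilton--Ivey pinching together with Perelman's estimates give that $g(t)$ has nonnegative sectional curvature and bounded curvature on each time slice. If $g(t)$ is not a family of round spheres, then for $-t$ large Perelman's Canonical Neighbourhood Theorem, combined with the classification of three-dimensional $\kappa$-solutions, forces the topological picture of a ``capped neck'': the manifold is covered by two cap regions, each of which, after rescaling, is arbitrarily close to the Bryant steady soliton, joined by a long neck region arbitrarily close to the round cylinder $S^2\times\R$. On the neck the metric is $\eps$-close to the $SO(3)$-invariant cylinder metric, which furnishes approximate Killing vector fields; one extends these over the whole manifold and corrects them to exact Killing fields by solving the associated linear parabolic (Lichnerowicz-type) system, controlling the error with a Carleman / maximum-principle estimate, exactly along the lines of Brendle's rotational-symmetry arguments for three-dimensional ancient solutions. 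This yields exact $SO(3)$ symmetry. \emph{I expect this to be the main obstacle}: one must propagate the approximate symmetry of the neck globally and backward in time, which requires delicate weighted estimates and the full strength of the $\kappa$-noncollapsing hypothesis in order to rule out degenerate ``cigar-like'' caps.

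\textbf{Stage I, continued, and Stage II.} Once $g(t)$ is rotationally symmetric it is a warped product over an interval, determined by a single profile function vanishing at the two poles. Using the structure theorem again one checks that for $-t$ large there is exactly one neck and that both tips are modelled on the Bryant soliton; since these two tip models are isometric, a moving-plane (Alexandrov reflection) argument applied to the profile in the rescaled backward limit, together with backward uniqueness of the Ricci flow, forces $g(t)$ to be reflection symmetric about its neck. We may therefore apply the main theorem of this paper: being not a family of contracting spheres, $g(t)$ has the unique asymptotic profile described here as $t\to-\infty$, which is also realized by Perelman's solution.

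\textbf{Stage III: rigidity.} It remains to show that two rotationally and reflection symmetric compact ancient $\kappa$-noncollapsed solutions with the same asymptotics coincide; applied to $g(t)$ and Perelman's solution $g_P(t)$ this finishes the proof. Work in the Ricci--DeTurck gauge so that the difference $h=g-g_P$ solves a linear parabolic equation whose coefficients, after the cylindrical rescaling, converge to those of the linearized operator on $S^2\times\R$ (and, in the tip regions, on the Bryant soliton). Decompose $h$ into its $L^2$-projections onto the eigenspaces of the limiting cylindrical operator: the unstable and neutral modes correspond to translations and to scaling, which have been normalized away, so a Merle--Zaag-type ODE dichotomy shows the stable part dominates; a weighted energy estimate, with the tip regions treated separately using the Bryant-soliton geometry, then forces $h\equiv 0$, whence $g(t)=g_P(t)$. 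This last stage parallels the authors' uniqueness argument for two-convex ancient solutions of the mean curvature flow in \cite{ADS}.
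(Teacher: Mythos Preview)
The statement you are attempting to prove is labelled \emph{Conjecture} in the paper and is \emph{not} proved there; the authors state explicitly that they address it ``in a forthcoming work,'' using the present paper's main theorem as one ingredient. So there is no proof in the paper to compare your attempt against.

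That said, your three-stage outline matches the program the paper itself sketches. Stage~I (rotational symmetry) is, as the paper notes, already carried out by Brendle \cite{Br1} and Bamler--Kleiner \cite{BK}, so you should cite those results rather than re-derive them. Stage~II is precisely Theorem~\ref{thm-asym}. Stage~III is what the authors announce as forthcoming, modelled on \cite{ADS}; your description of it (spectral decomposition on the cylinder, separate treatment of the tip regions, Merle--Zaag dichotomy) is the right picture.

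One genuine gap in your outline: Theorem~\ref{thm-asym} requires \emph{reflection} symmetry in addition to rotational symmetry, and neither \cite{Br1} nor \cite{BK} gives this. Your suggestion of an Alexandrov moving-plane argument on the profile function combined with backward uniqueness is plausible in spirit but is not a trivial step---the profile solves the nonlocal equation \eqref{eqn-un}, and moving-plane arguments for such equations are not standard. In the actual resolution of the conjecture one either proves reflection symmetry directly or, more likely, strengthens the asymptotic and uniqueness arguments so that they do not require it a priori (normalizing away the translation mode absorbs the potential asymmetry). You should flag this as a point requiring real work rather than fold it into Stage~I as a routine consequence.
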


The Hamilton-Ivey pinching estimate tells us that any two or three dimensional ancient solution with bounded curvature at each time slice has nonnegative sectional curvature.  Since our solution $(S^3,g(t))$ is closed, the strong maximum principle implies
that the sectional curvatures, and hence the curvature operator, are all strictly positive.  Hence, Hamilton's Harnack estimate (\cite{Ha1}) implies that the scalar curvature of the solution $g(t)$ has $R_t \ge 0$, yielding that there exists a uniform constant $C > 0$ so that $R(\cdot,t) \le C$, for all $t\in (-\infty,t_0]$.  Since the curvature is positive, this yields the curvature bound
\begin{equation}
  \label{eq-curv-bound}
  \|\Rm\|_{g(t)} \le C, \qquad \mbox{for all} \,\,\, -\infty < t \le t_0,
\end{equation}
and for some uniform constant $C$.

The previous discussion implies that any closed 3-dimensional $\kappa$-noncollapsed ancient solution is actually a {\em $\kappa$-solution} in the sense that was defined by Perelman in \cite{Pe1} (see Section \ref{sec-preliminaries} for more details).

\sk
In a recent important paper \cite{Br}, Brendle proved that any three-dimensional {\em non compact } ancient $\kappa$-solution is isometric to either a family of shrinking cylinders or their quotients, or to the Bryant soliton.  Brendle first shows that all three-dimensional ancient $\kappa$-solutions that are non compact must be rotationally symmetric.
He then shows that such a rotationally symmetric solution, if it is not a cylinder or its quotient, must be a steady Ricci soliton.
In the same paper Brendle states that his techniques can easily be applied to obtain the rotational symmetry of
three-dimensional compact ancient $\kappa$-solutions.  In fact this is shown in detail in a very recent work by Brendle \cite{Br1}.
Also very recently Bamler and Kleiner in \cite{BK} obtained the same result in the compact case using different methods from Brendle.

\sk

The {\em summary} is that any solution satisfying the assumptions
of Conjecture \ref{conj-perelman} is rotationally symmetric, hence reducing the resolution of Conjecture \ref{conj-perelman} to establishing the classification of {\em rotationally symmetric} solutions.  The challenge in this problem comes from the fact that Perelman's solution is not given in explicit form and is not a soliton.  Similar challenge appears in the classification of ancient compact
non-collapsed MCF solutions which was resolved in \cite{ADS1}.
Because of the instability of the linearized operator at the cylinder,
which appears as asymptotic limit as $t \to -\infty$ in both flows, one needs to establish refined asymptotics for any ancient
solution under consideration as $t \to -\infty$.

\sk

\sk

In an attempt to resolve Conjecture \ref{conj-perelman}, we will establish in this paper the (unique up to scaling) asymptotic behavior of any reflection and rotationally symmetric compact $\kappa$-noncollapsed ancient solution to the Ricci flow on $S^3$ which is not isometric to a round sphere.  Our main result states as follows.

\begin{theorem}\label{thm-asym}
  Let $(S^3, g(t))$ be any reflection and rotationally symmetric compact, $\kappa$-noncollapsed ancient solution to the Ricci flow on $S^3$ which is not isometric to a round sphere.
  Then the rescaled profile $u(\sigma,\tau)$ (solution to equation \eqref{eq-u}), defined on $\mathbb{R}\times \mathbb{R}$, has the following asymptotic expansions:
  \begin{enumerate}
    \item[(i)] For every $L > 0$,
    \[
    u(\sigma,\tau) = \sqrt{2} \Bigl(1 - \frac{\sigma^2 - 2}{8|\tau|}\Bigr) + o(|\tau|^{-1}), \qquad {\mbox on} \,\, |\sigma| \le L
    \]
    as $\tau \to -\infty$.
    \sk
    \item[(ii)] Define $z := {\sigma}/{\sqrt{|\tau|}}$ and $\bar{u}(\sigma,\tau) := u(z\sqrt{|\tau|}, \tau)$.
    Then,
    \[
    \lim_{\tau \to -\infty} \bar{u}(z,\tau) = \sqrt{2 - \frac{z^2}2}
    \]
    uniformly on compact subsets of $|z| < 2$.
    \sk
    
    \item[(iii)] Let $k(t) := R(p_t,t)$ be the maximal scalar curvature which is attained at the two tips $p_t$, for $t \ll -1$.  Then the rescaled Ricci flow solutions $(S^3,\bar g_{t}(s), p_{t})$, with ${\ds \bar {g}_{t}(\cdot, s) = k(t) \, g(\cdot,t+k(t)^{-1}\, s)}$, converge to the unique Bryant translating soliton with maximal scalar curvature one.
    Furthermore, $k(t)$ and the diameter $d(t)$ satisfy the asymptotics
    \[
    \qquad k(t) = \frac{\log|t|}{|t|} (1 + o(1))\, \quad \mbox{and} \quad d(t) = 4\sqrt{|t|\log |t|}\, (1 +o(1))
    \]
    as $t \to -\infty.$
  \end{enumerate}
\end{theorem}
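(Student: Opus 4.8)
\smallskip

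The plan is to adapt to the Ricci flow on $S^3$ the strategy introduced for rotationally symmetric mean curvature flow in \cite{ADS1}, analyzing the solution in three overlapping regions: the \emph{parabolic region} $\{|\sigma|\le L\}$, the \emph{intermediate region} $\{|z|<2\}$ with $z=\sigma/\sqrt{|\tau|}$, and the \emph{tip region} near $p_t$. First, by the Hamilton--Ivey estimate, Hamilton's Harnack inequality and the curvature bound \eqref{eq-curv-bound}, the solution is a $\kappa$-solution, so by Perelman's structure theory together with non-collapsing its asymptotic soliton (backward blow-down limit) is a shrinking round sphere or cylinder, and since $g(t)$ is not round it is the cylinder $S^2\times\R$. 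Using reflection symmetry to place $\sigma=0$ at the equator, this yields $u(\sigma,\tau)\to\sqrt2$ uniformly on $|\sigma|\le L$ as $\tau\to-\infty$. Writing $u=\sqrt2+v$, equation \eqref{eq-u} linearizes to $v_\tau=\cL v+Q$, where $\cL=\partial_\sigma^2-\tfrac\sigma2\partial_\sigma+1$ is the Ornstein--Uhlenbeck operator on the Gaussian-weighted space $L^2(e^{-\sigma^2/4}d\sigma)$, with eigenvalues $1-\tfrac k2$ and Hermite eigenfunctions, and $Q$ is quadratic in $(v,v_\sigma)$. Reflection symmetry forces $v$ to be even, so only $k=0,2,4,\dots$ occur, and the only nonnegative eigenvalues are $\lambda_0=1$ (unstable) and $\lambda_2=0$ (neutral, eigenfunction $\sigma^2-2$).

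For part (i) I would decompose $v=v_0+v_2+v_+$ into unstable, neutral and stable parts and run a Merle--Zaag-type ODE argument on their norms. Since $v\to0$ as $\tau\to-\infty$ no component may grow, and since $Q$ is quadratic this forces $v_0$ and $v_+$ to be dominated by $v_2$, so $v=a(\tau)(\sigma^2-2)+o(\|v_2\|)$ with $a(\tau)\to0$; projecting \eqref{eq-u} onto the neutral eigenfunction gives $a_\tau=-c\,a^2+\text{(l.o.t.)}$ with $c>0$ the relevant Gaussian moment of $Q$, hence $a(\tau)=-\tfrac{1}{c|\tau|}(1+o(1))$, and evaluating $c$ yields $a(\tau)=-\tfrac{\sqrt2}{8|\tau|}(1+o(1))$, i.e. (i). \textbf{This is the main obstacle}: making the spectral decomposition legitimate needs a priori weighted estimates controlling $v$ out to $|\sigma|\sim\sqrt{|\tau|}$, where $\cL$ degenerates and $Q$ is not small against the weight; and one must show $v_2$ does not decay faster than $1/|\tau|$ — equivalently that the parabolic rescaling can be synchronized with the true extinction time so the unstable mode is switched off while the neutral one is switched on — which uses that the solution closes up and is genuinely not a cylinder.

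For part (ii), passing to $z=\sigma/\sqrt{|\tau|}$ and $\bar u(z,\tau)=u(z\sqrt{|\tau|},\tau)$ turns \eqref{eq-u} into a singular perturbation of the first order ODE $\tfrac12 z\,\bar u_z=\tfrac{\bar u}{2}-\tfrac1{\bar u}$, whose solutions are $\bar u^2=2+Cz^2$; positivity of $\bar u$ together with the fact that $\psi$ must vanish (the surface closes up) forces $C<0$, and matching with the parabolic region near $z=0$ — where (i) gives $\bar u\to\sqrt2$ and $\bar u^2=2-\tfrac{z^2}{2}+o(z^2)$ — pins down $C=-\tfrac12$, i.e. $\bar u=\sqrt{2-z^2/2}$. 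I would establish uniform bounds and gradient estimates for $\bar u$ on $|z|\le 2-\delta$ (using \eqref{eq-curv-bound} and the concavity/monotonicity of the profile), extract a limit, and identify it with $\sqrt{2-z^2/2}$ by uniqueness for the limiting ODE and the matching; the vanishing $\bar u(\pm2,\tau)\to0$ then locates the tips at $z=\pm2$.

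Finally, for part (iii): since $k(t)=R(p_t,t)$ is the spatial maximum of $R$ (shown earlier from the Harnack inequality and the maximum principle) and the neck radius $\sqrt{2(T-t)}\sim\sqrt{|t|}$ is far larger than the tip scale $k(t)^{-1/2}$, any limit of $(S^3,\bar g_t(\cdot,s),p_t)$ is a non-compact ancient $\kappa$-solution, so by Brendle's classification \cite{Br} it is a shrinking cylinder, a quotient thereof, or the Bryant soliton; having a cap — a point where the spheres of symmetry degenerate with bounded positive curvature — it is neither a cylinder nor a quotient, hence it is the Bryant soliton, normalized to maximal scalar curvature $1$ by the choice of $k(t)$, and uniqueness of this soliton upgrades subsequential to full convergence. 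The asymptotics of $k(t)$ and $d(t)$ follow by matching: the rescaled Bryant soliton has $R$ decaying like the reciprocal of the distance to the tip, while near $z=2$ the intermediate profile $\bar u\sim\sqrt{2(2-z)}$ makes $\psi$ behave like the square root of the distance to the tip with $R\sim 2/\psi^2$; equating the two over the overlap gives $k(t)=\tfrac{\log|t|}{|t|}(1+o(1))$, and integrating arclength from tip to tip — the neck contributing $\sim 4\sqrt{|\tau|}$ in $\sigma$, hence $\sim 4\sqrt{|t|\log|t|}$ in length, and the two caps only $O(k(t)^{-1/2})=o(\sqrt{|t|\log|t|})$ — gives $d(t)=4\sqrt{|t|\log|t|}(1+o(1))$. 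This last matching is itself technically involved, requiring sharp control of the Bryant soliton at spatial infinity and of the transition region.
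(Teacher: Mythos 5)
Your three--region architecture (parabolic, intermediate, tip) and the soft preliminaries (the backward limit is the cylinder, spectral decomposition at the cylinder with the neutral mode $\sigma^2-2$ dominant, a Merle--Zaag type dichotomy, matching into the $z$-variable, Perelman compactness plus Brendle's classification at the tip) coincide with the paper's. But at each stage the step you yourself flag as ``the main obstacle'' is exactly the step you leave unproved, and it is not a technicality: the entire content of Sections \ref{sec-cylindrical}--\ref{section-intermediate} is the resolution of those steps by comparison with Brendle's Bryant-type barriers $Y_a$ for equation \eqref{eqn-Y} (Propositions \ref{prop-barriers}, \ref{prop-Br1}, \ref{prop-Br2}). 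In the parabolic region the spectral decomposition is not legitimized by weighted estimates out to $|\sigma|\sim\sqrt{|\tau|}$; it is legitimized by cutting off at $|\sigma|\le\delta(\tau)^{-\theta}$ with $\delta(\tau)=\sup_{\tau'\le\tau}|u(0,\tau')-\sqrt2|$, and the cutoff and nonlocal errors --- note that $J[v]\,v_\sigma$ has no analogue in MCF and cannot be absorbed as a generic ``quadratic'' term --- are controlled by the barrier-induced pointwise bounds $|v|+|v_\sigma|\le C\,\delta(\tau)^{1/8}$ on that interval (Lemma \ref{lemma-simon-crucial}) together with the interpolation bound relating $\delta(\tau)$ to $\sup_{\tau'\le\tau}\|\bar v(\tau')\|$; without these your ODE for the neutral coefficient does not close. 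In the intermediate region your plan to pass to the limit in \eqref{eqn-uz} using only \eqref{eq-curv-bound} and concavity fails quantitatively: concavity gives only $|u_\sigma|\le 1$, i.e.\ $|\bar u_z|\le\sqrt{|\tau|}$, whereas to see that $u_\sigma^2/u$ and the nonlocal drift $J\,u_\sigma$ drop out of \eqref{eqn-uz} (so that the limit really is your first-order ODE, and so that the one-sided bounds can be propagated from the matching line $\sigma=M$ by integration and by characteristics) one needs the sharp gradient bound $u_\sigma^2\le\frac{1+\delta}{2|\tau|}\,(2u^{-2}-1)$ of Proposition \ref{prop-Y-bound-above}, which again comes from the maximum principle against $Y_a$ with $a^2\sim 2|\tau|/(1+\delta)$, initialized by the refined asymptotics \eqref{eqn-u-asym-p}. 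So (i) and (ii) as proposed are the outline the paper starts from, with its two key lemmas missing.

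In (iii) your identification of the tip limit as the Bryant soliton agrees with the paper, but your route to $k(t)$ is both different and, as stated, does not determine $k(t)$: the relation you propose to ``equate over the overlap,'' namely $R\sim 2/\psi^2$, is scale invariant and holds in the far field of every rescaling of the Bryant soliton as well as on every neck, so equating it fixes nothing. A genuine matched-asymptotics argument would have to match the radius (or curvature) as a function of distance to the tip on an overlap region of size much larger than $k(t)^{-1/2}$, while the compactness theory only gives Cheeger--Gromov convergence on regions of bounded diameter at the tip scale; supplying that overlap control is precisely the hard point you acknowledge and leave open. The paper avoids it altogether: differentiating $u(\sigma(\tau),\tau)=0$ gives the tip ODE \eqref{ODE-diameter} with the nonlocal drift $J$ from \eqref{eqn-defn-J}; Proposition \ref{prop-J} evaluates $J(\sigma(\tau),\tau)= C_0\sqrt{\kappa(\tau)}\,(1+o(1))$ using soliton convergence only on bounded soliton-scale sets, integration by parts, and the far-field decay of $Z_0$; the divergence-form identity of Proposition \ref{prop-C0-11} gives $C_0=-1$; and combining with $\sigma(\tau)=2\sqrt{|\tau|}(1+o(1))$ from (ii) yields $\kappa(\tau)=|\tau|(1+o(1))$, hence $k(t)=\frac{\log|t|}{|t|}(1+o(1))$ and $d(t)=4\sqrt{|t|\log|t|}\,(1+o(1))$. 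You would need either to adopt this mechanism or to construct the overlap region with quantitative (not merely subsequential, compact-set) convergence to the soliton; as written, the curvature asymptotics in your part (iii) rest on a matching that is not specified well enough to be checked.
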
\noindent

In a forthcoming work, we use Theorem \ref{thm-asym} to address Conjecture \ref{conj-perelman}, in a similar way that results about unique asymptotics of ancient ovals shown in \cite{ADS1} were used to prove the classification result of closed ancient mean curvature flow solutions (see \cite{ADS}).

\sk

As an immediate Corollary of the result in \cite{Br1} and of Theorem \ref{thm-asym} we have the following result.

\begin{corollary}
  Let $(S^3, g(t))$ be any reflection symmetric compact $\kappa$-noncollapsed ancient solution to the Ricci flow on $S^3$.  Then, it is rotationally symmetric and is either isometric to a round sphere or it has unique asymptotics which are given by Theorem \ref{thm-asym}.
\end{corollary}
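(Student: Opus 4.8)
The plan is to deduce this by chaining together the curvature structure of compact ancient $\kappa$-noncollapsed solutions, Brendle's rotational-symmetry theorem \cite{Br1}, and Theorem~\ref{thm-asym}; since the corollary is advertised as immediate, the real content is just to verify that the hypotheses line up.

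First I would observe that the hypotheses force $(S^3,g(t))$ to be a compact three-dimensional ancient $\kappa$-solution in Perelman's sense. Indeed, because $S^3$ is closed the curvature of $g(t)$ is bounded on each time slice, so the Hamilton--Ivey pinching estimate applies and, exactly as in the paragraph preceding Conjecture~\ref{conj-perelman}, the strong maximum principle forces the curvature operator of $g(t)$ to be strictly positive for all $t$, while Hamilton's Harnack inequality gives $R_t\ge 0$ and hence the uniform bound \eqref{eq-curv-bound}. Combined with the assumed $\kappa$-noncollapsedness on all scales, this is precisely the statement that $(S^3,g(t))$ is a $\kappa$-solution.

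Next I would invoke Brendle's theorem \cite{Br1}: every compact three-dimensional ancient $\kappa$-solution is rotationally symmetric. Thus for each $t$ the metric $g(t)$ is $SO(3)$-invariant with $2$-sphere principal orbits, and is therefore a warped product over an interval whose two singular orbits are the tips. Since the solution is assumed reflection symmetric, and a reflection of such a warped product must interchange the two tips, after composing with a rotation it is realized as the reflection of the orbit interval about its midpoint; hence the reflection and rotational symmetries are mutually compatible and $(S^3,g(t))$ satisfies the hypotheses of Theorem~\ref{thm-asym}. Applying that theorem, we conclude that $g(t)$ is either isometric to a round shrinking sphere, or else has rescaled profile $u(\sigma,\tau)$ obeying the expansions (i)--(iii), which is exactly the assertion.

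There is no genuine analytic obstacle here, since all the difficulty is already packaged into \cite{Br1} and into Theorem~\ref{thm-asym}. The only point that needs a moment's thought is the compatibility, noted above, between the rotational symmetry produced by Brendle and the reflection symmetry that is assumed, ensuring that the solution genuinely falls within the scope of Theorem~\ref{thm-asym}.
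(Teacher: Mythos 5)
Your proposal is correct and takes essentially the same route as the paper, which states this corollary without a separate proof as an immediate consequence of Brendle's rotational symmetry theorem \cite{Br1} combined with Theorem \ref{thm-asym}: one checks the solution is a $\kappa$-solution, applies \cite{Br1} to get rotational symmetry, and then applies Theorem \ref{thm-asym}. The only point you elaborate that the paper leaves implicit is the compatibility of the assumed reflection symmetry with the rotational symmetry (so that the profile satisfies $\psi(x,t)=\psi(-x,t)$); your remark is adequate under the paper's intended reading of ``reflection symmetric,'' and the paper itself treats this as immediate.
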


\sk
In order to prove Theorem \ref{thm-asym} we will combine techniques developed in \cite{ADS1} and \cite{Br}.  In \cite{Br}, under the assumption on rotational symmetry, Brendle constructed barriers by using gradient Ricci solitons with singularity at the tip
which were found by Bryant (\cite{Bry}).  In the proof of Theorem \ref{thm-asym} we use Brendle's barriers to localize our equation
in the parabolic region, similarly as in \cite{Br}.  The methods are similar to the ones used in \cite{ADS1}, but new difficulties arise due to the fact
that the equation \eqref{eqn-un} is non-local.  The localization enables us to do spectral decomposition in the parabolic region and obtain refined asymptotics of our solution.

\sk

The {\em outline} of the paper is as follows: In section \ref{sec-preliminaries} we discuss the backward in time limit of our solution and we list all equations under rotational symmetry, introducing different change of variables in different regions.  In section \ref{sec-cylindrical} we use Brendle's barriers to achieve the spectral decomposition of our solution which yields precise asymptotics in the parabolic region.
Subsequently, we combine this exact behavior in the parabolic region together with barrier type arguments to obtain the precise behavior of our solution in the
intermediate region (see section \ref{section-intermediate}).  In the last section
\ref{sec-tip}, we show the convergence of our solution to the Bryant soliton, after
appropriate rescaling and change of variables, and obtain the precise behavior of the
maximum scalar curvature and the diameter, as time approaches $-\infty$.

\section{Preliminaries}

\label{sec-preliminaries}

\subsection{Equations under rotational symmetry}

\label{sec-equations}
Assume that $g$ is a solution of the Ricci flow \eqref{eq-rf} on $S^{3}$, which is rotationally and reflection symmetric and shrinks to a round point at time $T$.  Since $g(\cdot,t)$ is rotationally symmetric, it can can be written as
\[
g = \phi^2\, dx^2 + \psi^2\, g_{\rm can}, \qquad \mbox{on} \,\, (-1,1)\times S^2
\]
where $(-1,1)\times S^n$ may be naturally identified with the sphere $S^{3}$ with its North and South Poles removed.  The quantity $\psi(x,t) > 0$ is the radius of the hypersurface $\{x\}\times S^2$ at time $t$.  By the reflection symmetry assumption we have $\psi(x,t) =\psi(-x,t)$ for all $x\in (-1,1)$.
The distance function to the equator is given by
\[
s(x, t) = \int_0^x \phi(x', t)\, dx'.
\]
We will write
\[
s_\pm(t) := \lim_{x\to \pm 1} s(x, t),
\]
or shortly $s_{\pm}$, for the distance from the equator to the South and the North Poles, respectively.  Under Ricci flow the distances $s_\pm(t)$ evolve with time.
If we abbreviate
\[
ds = \phi(x, t) \, dx \qquad \text{ and }
\qquad \frac{\pd}{\pd s} = \frac 1{\phi(x, t)} \, \frac{\pd}{\pd x}
\]
then we can write our metric as
\begin{equation}\label{eq-metric}
  g = ds^2 + \psi^2\, g_{\rm can}.
\end{equation}
The time derivative does not commute with the $s$-derivative, and in general we
must use
\[
\frac{\pd}{\pd t}ds = \phi_t \, dx = \frac{\phi_t}{\phi} \, ds \qquad
\text{ and } \qquad
\left[ \frac{\pd}{\pd t}, \frac{\pd}{\pd s}\right] =
-\frac{\phi_t }{\phi} \frac{\pd}{\pd s}.
\]
The Ricci tensor is given by
\begin{align*}
  {\mathrm{Rc}}
  & = 2K_0 \, ds^2 + \left[K_0 + K_1\right] \psi^2 g_{\rm can} \\
  & = -2\frac{\psi_{ss}}{\psi }\, ds^2
  + \left\{
  -\psi \psi_{ss} - \psi_s ^2+1
  \right\} g_{\rm can}
\end{align*}
where $K_0$ and $K_1$ are the two distinguished sectional curvatures that any
metric of the form \eqref{eq-metric} has.  They are the curvature of a plane tangent to $\{s\}\times S^n$, given by
\begin{equation}
  \label{eq-K1}
  K_1:= \frac{1-\psi_s^2}{\psi^2},
\end{equation}
and the curvature of an orthogonal plane given by
\begin{equation}
  \label{eq-K0}
  K_0 := -\frac{\psi_{ss}}{\psi}.
\end{equation}
Moreover, the scalar curvature is given by
\[
R = g^{jk} R_{jk}
= 4K_0 + 2K_1.
\]
The time derivative of the metric is
\[
\frac{\pd g}{\pd t} =
2\frac{\phi_t}{\phi}\, ds^2 + 2\psi\psi_t\, g_{\rm can}.
\]
Therefore, since the metric $g(t)$ evolves by Ricci flow \eqref{eq-rf}, we have
\[
\frac{\phi_t}{\phi} = 2\frac{\psi_{ss}}{\psi},
\]
so that
\[
\pdd t \, ds = 2\frac{\psi_{ss}}{\psi}\, ds \qquad \text{ and } \qquad
\left[\pdd t, \pdd s\right]
= -2\frac{\psi_{ss}}{\psi}\, \pdd s.
\]
Under Ricci flow the radius $\psi(s,t)$ satisfies the equation
\begin{equation}
  \label{eq-psi}
  \psi_t = \psi_{ss} - \frac{1-\psi_s^2}{\psi}.
\end{equation}
As in \cite{AK1}, for our metric \eqref{eq-metric} to define a smooth metric on $S^3$ we need to have
\begin{equation}
  \label{closing-psi}
  \psi_s(s_-) = 1, \,\,\,\, \psi^{(2k)}(s_-) = 0 \qquad \mbox{and} \qquad \psi_s(s_+) = -1, \,\,\,\, \psi^{(2k)}(s_+) = 0,
\end{equation}
for $k\in \mathbb{N}\cup\{0\}$.

\sk

Consider next the rescaled function
\begin{equation}\label{eqn-defnu}
  u(x, \tau) := \frac{\psi(x,t)}{\sqrt{T-t}},
\end{equation}
as well as the rescaled time and distance to the equator
\[
\sigma(x,t) := \frac{s(x,t)}{\sqrt{T-t}}, \quad \tau = -\log (T-t).
\]
If we write
\[
d\sigma = \frac{ds}{\sqrt{T-t}}, \qquad
\frac{\pd}{\pd\sigma} = \sqrt{T-t}\,\frac{\pd}{\pd s},\qquad
\pdd\tau = (T-t)\pdd t,
\]
then we get
\begin{equation} \label{eq-dsigma}
  \pdd\tau d\sigma = \left(\frac12 + 2\frac{u_{\sigma\sigma}}{u}\right)d\sigma.
\end{equation}
For the commutator we get
\begin{equation}\label{eq-sigma-tau-commutator}
  \left[\pdd\tau, \pdd\sigma\right]
  = - \left(\frac12 + 2\frac{u_{\sigma\sigma}}{u}\right) \pdd\sigma.
\end{equation}
We write $\sigma_{\pm}(t) = \frac{s_{\pm}(t)}{\sqrt{T-t}}$, or simply $\sigma_{\pm}$ for the rescaled distance from the equator to the poles.
The rescaled radius $u:(-1,1)\times(-\infty, 0)\to\R$ satisfies the equation
\begin{equation} \label{eq-u}
  u_{\tau} = u_{\sigma\sigma} + \frac{u_{\sigma}^2}{u} - \frac 1u + \frac u2
\end{equation}
with boundary conditions
\begin{equation} \label{eq-closing1}
  u_{\sigma} = \mp 1,\quad
  u^{(2k)}=0 \qquad \text{ at }\sigma=\sigma_\pm(\tau), \quad \tau<0.
\end{equation}

\subsubsection*{The expansion term}
By analogy with similar equations in MCF one might expect a term of the form $-\frac 12 \sigma u_\sigma$ in \eqref{eq-u}.  However, in the present set-up that term is replaced by a change in the commutator $[\pd_\tau, \pd_\sigma]$ (in \eqref{eq-sigma-tau-commutator} or \eqref{eq-dsigma}) which accounts for the stretching involved in passing from the $s$ to $\sigma$ coordinate.

\sk

The vector fields $\partial_{\tau}$ and $\partial_{\sigma}$ do not commute.  To overcome this we replace the vector field $\pd_\tau$ with
\begin{equation}
  \label{eq-comm-vec-field}
  D_\tau = \pd_\tau - I \, \pd_\sigma
\end{equation}
for some function $I$, and we require that $D_\tau$ and $\pd_\sigma$ commute.  It follows from
\[
[D_\tau, \pd_\sigma]
= [\pd_\tau, \pd_\sigma] + I_\sigma \pd_\sigma
= \left\{-\frac12 - 2\frac{u_{\sigma\sigma}}{u} + I_\sigma\right\} \pd_\sigma.
\]
Hence $D_\tau$ and $\pd_\sigma$ commute if
\begin{equation}
  \label{eqn-defn-J}
  I(\sigma, t) = \frac12\sigma + J(\sigma, t),\qquad
  J(\sigma,t) := 2 \int_{0}^{\sigma} \frac{u_{\sigma\sigma}}{u}\, d\sigma'.
\end{equation}
\begin{figure}
  \includegraphics[width=\textwidth]{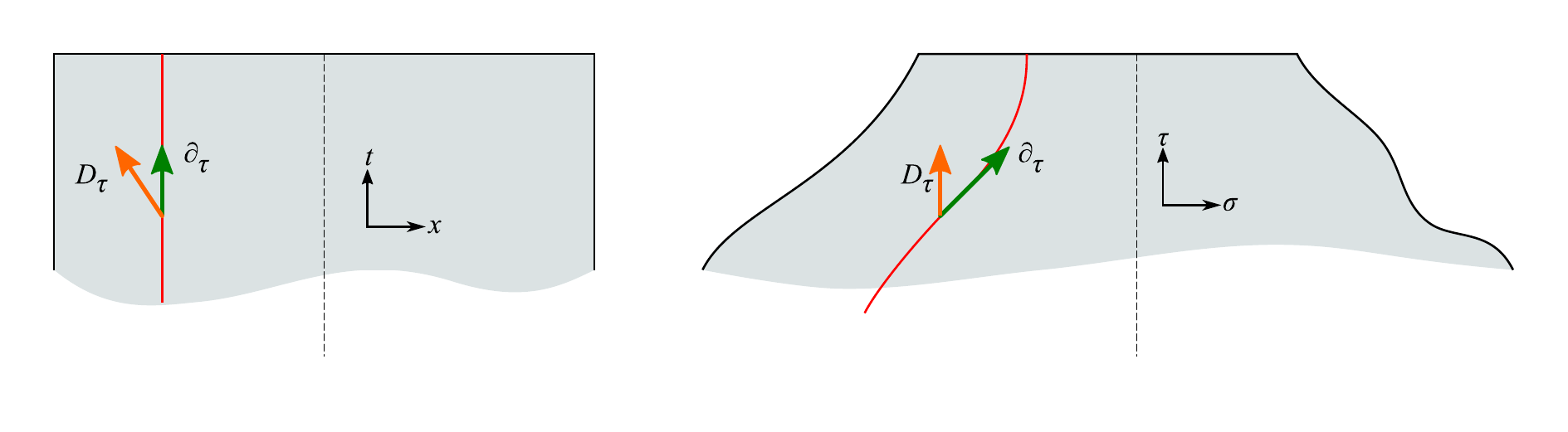}
  \caption{Space-time in $(x, \tau)$ coordinates on the left, and in $(\sigma,\tau)$ coordinates on the right.  $\pd_\tau$ and $\pd_x$ commute, and $D_\tau$ and $\pd_\sigma$ commute.}
\end{figure}%
We can then write the equation \eqref{eq-u} for $u$ as
\begin{equation}\label{eqn-u0}
  D_{\tau} u = u_{\sigma\sigma} - \frac{\sigma}{2} \, u_{\sigma}
  - J(\sigma,\tau) \, u_{\sigma} + \frac{u_{\sigma}^2}{u} - \frac 1u + \frac u2.
\end{equation}
One can think of $D_\tau$ as ``the derivative with respect to $\tau$ keeping $\sigma$ constant,'' while $\pd_\tau$ is ``the $\tau$-derivative keeping $x$ fixed.''
We will abuse notation and write $u_\tau$ both for $\pd_\tau u$ and for $D_\tau u$, when it is clear from the context which time derivative we mean.
For instance, we will write equation \eqref{eqn-u0} as
\begin{equation}\label{eqn-un}
  u_\tau = u_{\sigma\sigma} - \frac{\sigma}{2} \, u_{\sigma}
  - J(\sigma,\tau) \, u_{\sigma} + \frac{u_{\sigma}^2}{u} - \frac 1u + \frac u2.
\end{equation}

\begin{remark}\label{rem-cylinder}
  The solution $u \equiv \sqrt{2}$ corresponds to the shrinking cylinder soliton $S^2\times \mathbb{R}$.
\end{remark}

The representation $g=\phi^2 \, dx^2+\psi^2\, g_{\rm can}$ leads to singularities at the poles $x=\pm 1$.  We overcome this difficulty by choosing new local coordinates.  As in Angenent-Caputo-Knopf \cite{ACK}, we regard $\psi(s,t)$ as a new local coordinate on any interval $(x_0, x_1)$ on which $\psi(x, t)$ is a monotone function of $x$.
By our assumptions $\psi(x,t)$ is a concave function of $s$ (i.e.~$\psi_{ss}<0$).  This implies $\psi(x, t)$ is strictly increasing for $x<0$ and strictly decreasing for $x>0$.  In the region $x<0$ (the southern hemisphere) we take $\psi$ as coordinate, and express the metric and all its components as functions of $(\psi,t)$.  The metric is then
\begin{equation}\label{eqn-ggg}
  g = \phi^2\, dx^2 + \psi^2\,g_{\rm can},
\end{equation}
where we still have to write $(\phi\, dx)^2$ in terms of $\psi$.  We have
\[
\phi(x, t) \, dx = ds = \frac{d\psi}{\psi_s}
\]
so that
\[
g = \frac{d\psi^2}{\psi_s^2} + \psi^2\, g_{\rm can}.
\]
To describe the evolution of the metric we must therefore keep track of the quantity $\bar Y := \psi_s^2$ as a function of $(\psi, t)$.  That is, we set
\begin{equation}
  \label{eqn-coor-Y}
  \bar Y(\psi, t) = \psi_s^2(s,t), \qquad \psi=\psi(s,t).
\end{equation}
A direct calculation shows that it evolves by the PDE
\begin{equation}\label{eq-bY}
  \bar Y_t =
  \bar Y \bar Y_{\psi\psi}
  - \frac 12\, (\bar Y_{\psi})^2
  + (1 - \bar Y)\, \frac{\bar Y_{\psi}}{\psi}
  + 2 (1-\bar Y) \, \frac{\bar Y}{\psi^2}.
\end{equation}
In this equation $\pd_t$ stands for the derivative with respect to time at constant $\psi$.  Therefore $\pd_t$ and $\pd_\psi$ commute.

As above, we will also work here with the rescaled variables $(u,\tau)$ given by \eqref{eqn-defnu}.  Thus we introduce $Y:=u_\sigma^2$
as a function of $(u,\tau)$, that is
\begin{equation}\label{eqn-coor-tip}
  \bar Y(\psi, t) = Y(u, \tau), \qquad u=\frac{\psi}{\sqrt{T-t}},\, \tau = -\log(T-t).
\end{equation}
A short computation then shows that $Y(u,\tau)$ evolves by
\begin{equation}\label{eqn-Y}
  Y_\tau + \frac u2 \, Y_u = Y Y_{uu} - \frac 12\, (Y_u)^2 + (1 - Y)\, \frac{Y_u}{u} + 2 (1-Y) \, \frac{Y}{u^2}.
\end{equation}

\sk

\subsection{Backward limit of our solution}\label{subsec-blimit}

We will first analyze the backward limit of any $\kappa$-solution.  Let $\bar{t} = -t$.  By the work of Perelman (\cite{Pe1}, see also \cite{KL} for details), for every $\bar{t} > 0$ there is some $q(\bar{t}) \in M$ so that $l(q(\bar{t}),\bar{t}) \le 3/2$, where
$l(q,\bar{t})$ denotes the reduced distance
\[
l(q,\bar{t}) := \frac{1}{2\sqrt{\bar{t}}} \inf_{\gamma} \int_0^{\bar{t}} \sqrt{\bar{t}'}\,(R(\gamma(\bar{t}')) + |\dot{\gamma}|^2)\, d\bar{t}',
\]
and where $R(\gamma(\bar{t}'), \bar{t}')$ and $|\dot{\gamma}|$ are computed using the metric at $-\bar{t}'$ and $\gamma$ is any curve connecting some fixed point $p$ and any point $q$ so that $\gamma(0) = p$ and $\gamma(\bar{t}) = q$.  In \cite{Pe1} Perelman showed that for a subsequence of $\bar{t}_i := -t_i$ (call it again $\bar{t}_i$), the parabolically rescaled sequence of metrics $g_i(t)$, around $l(q(\bar{t}_i),\bar{t}_i), t_i)$, converges to a non flat gradient shrinking Ricci soliton.  By the classification result of such solitons we know they are either the spheres or round cylinders $S^2\times \mathbb{R}$.  The limiting gradient soliton is called an {\em asymptotic soliton.  }

\begin{lemma}
  \label{lemma-sphere}
  Assume $(S^3,g(t))$ is a closed $\kappa$-solution whose asymptotic soliton is a round sphere.  Then the Ricci flow solution $g(\cdot,t), t \in
  (-\infty,T)$ must itself be a family of shrinking round spheres.
\end{lemma}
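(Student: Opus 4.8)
The plan is to show that $(S^3,g(t))$ must be a gradient shrinking Ricci soliton, and then to invoke the classification of compact three-dimensional shrinking solitons with positive curvature operator. Recall from the discussion preceding \eqref{eq-curv-bound} that a closed three-dimensional $\kappa$-solution has strictly positive curvature operator; hence, by Hamilton's convergence theorem for $3$-manifolds with positive Ricci curvature, the normalized flow converges exponentially to a round metric, and equivalently $g(t)$ has finite extinction time $T$ at which it shrinks to a round point, exactly as in the set-up around \eqref{eq-rf}. Thus $g(t)$ is asymptotic to the round shrinking sphere at \emph{both} ends of its maximal time interval: as $t\to-\infty$ by hypothesis, and as $t\to T$ because it contracts to a round point. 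The whole content of the lemma is to upgrade this two-sided \emph{approximate} self-similarity to genuine self-similarity, for which I would use Perelman's reduced volume.

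Fix as base point the point $(p_*,T)$ to which $g(t)$ contracts, and for $t<T$ let $\tilde V(t)$ denote the reduced volume of $g$ based at $(p_*,T)$, computed on the time-$t$ slice; the reduced distance and reduced volume based at the singular time $T$ are defined, as usual, as limits from base times $t_0\uparrow T$. By Perelman's monotonicity (\cite{Pe1}, see \cite{KL}), $\tilde V(t)$ is non-decreasing in $t$ on $(-\infty,T)$, and it is constant on a subinterval precisely when $g$ is a gradient shrinking soliton there. It therefore suffices to check that $\lim_{t\to-\infty}\tilde V(t)=\lim_{t\to T^-}\tilde V(t)$. For the limit as $t\to-\infty$: along the times $t_i\to-\infty$ used in the construction of the asymptotic soliton above, $\tilde V(t_i)$ converges to the reduced volume of the limiting shrinking soliton, and monotonicity promotes this to $\lim_{t\to-\infty}\tilde V(t)=\bar V$, the reduced volume of the asymptotic soliton. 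Under our hypothesis the asymptotic soliton is the round shrinking sphere, so $\bar V$ is the universal constant $\bar V_{S^3}<1$ obtained by evaluating the reduced volume of the round shrinking sphere based at its own singular point; a short computation shows this value is independent of the time slice.

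The remaining step, which I expect to be the main obstacle, is the limit as $t\to T^-$. Since $g(t)$ contracts to a round point, the tangent flow of $g$ at $T$ is precisely the round shrinking sphere, and the reduced volume based at the singular point converges, as $t\to T^-$, to the reduced volume of that tangent flow, namely $\bar V_{S^3}$ again. This is the statement that the reduced-volume density at a round point equals the Gaussian density of the round shrinking sphere; it follows from the fact that a round-point singularity is of Type~I together with the exponential convergence of the normalized flow to the round metric, but making it fully rigorous --- whether by an asymptotic analysis of the reduced distance as $t\uparrow T$, or by quoting the corresponding result from the theory of Type~I singularities --- is where the real work lies.

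Granting the two limits, $\tilde V(t)$ is non-decreasing on $(-\infty,T)$ with $\lim_{t\to-\infty}\tilde V(t)=\lim_{t\to T^-}\tilde V(t)=\bar V_{S^3}$, hence $\tilde V\equiv\bar V_{S^3}$. By the equality case of Perelman's monotonicity, $(S^3,g(t))$ is a gradient shrinking Ricci soliton for every $t<T$. A compact three-dimensional gradient shrinking Ricci soliton is Einstein, hence of constant sectional curvature; since the underlying manifold is $S^3$, each slice $g(t)$ is a round metric, and the family $g(t)$ is therefore a family of shrinking round spheres. This proves the lemma.
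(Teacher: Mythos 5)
Your route is genuinely different from the paper's, and its outline (squeeze a monotone reduced volume between the backward asymptotic soliton and the forward tangent flow at the round point, then use the rigidity case) is plausible — but as written it is not a proof, and the missing pieces are exactly the load-bearing ones. First, everything rests on the reduced distance and reduced volume based at the \emph{singular} space-time point $(p_*,T)$: its definition as a limit of regular basepoints, the monotonicity of the resulting $\tilde V$, the identification $\lim_{t\to T^-}\tilde V(t)=\bar V_{S^3}$ (the reduced-volume density at a round point equals that of the tangent shrinking sphere), and the rigidity statement ``$\tilde V$ constant $\Rightarrow$ gradient shrinking soliton'' in this singular-basepoint setting. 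This is the content of the Type~I singularity theory (Enders--M\"uller--Topping, Mantegazza--M\"uller), none of which is supplied; you yourself flag the $t\to T^-$ limit as ``where the real work lies,'' so the argument is acknowledged to be incomplete at its central step. Second, there is an unacknowledged basepoint mismatch at the other end: the asymptotic soliton in the hypothesis is produced by Perelman's construction using the reduced distance from a \emph{regular} point $(p,t_0)$ via the points $q(\bar t_i)$, whereas your $\tilde V$ is based at $(p_*,T)$. The assertion that $\tilde V(t_i)$ converges to the reduced volume of \emph{that} asymptotic soliton conflates the two; monotonicity gives existence of $\lim_{t\to-\infty}\tilde V(t)$, not its value. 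You would need to rerun the blow-down argument with the singular basepoint and then argue that the resulting shrinker is again the round sphere (plausible, since compactness of the limit makes the convergence of the slices $\tfrac1{|t_i|}g(t_i)$ global, but this is not written), or invoke a basepoint-independence statement that you do not prove.

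For contrast, the paper's proof avoids all of this machinery: since the rescaled metrics $g_i(0)=\tfrac1{(-t_i)}g(t_i)$ converge to a round sphere, there are $\rho>0$ and $i_0$ with $\ric(g(t_i))\ge \rho\, R(g(t_i))\, g(t_i)$ for $i\ge i_0$; Hamilton's Theorem 9.6 in \cite{Ha} propagates this pinching forward in time, and since $t_i\to-\infty$ it holds on all of $(-\infty,T)$; the rigidity theorem of \cite{BHS} for ancient solutions with pinched curvature then gives constant positive sectional curvature, i.e.\ a family of shrinking round spheres. If you want to salvage your approach you must either import the singular-basepoint reduced volume theory wholesale and fix the basepoint identification, or switch to the paper's pinching argument, which settles the lemma in a few lines.
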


\begin{proof}
  By our assumption, $(S^3, g_i(t))$, where $g_i(\cdot,t) = \frac{1}{(-t_i)}\, g(t_i - t \, t_i)$, converges to a sphere as $i\to \infty$.  Then there exist a $\rho > 0$ and an $i_0 > 0$, so that for all $i\ge i_0$ we have $\ric(g_i(0)) \ge \rho\, R(g_i(0))\, g_i(0)$, or equivalently, $\ric(g(t_i)) \ge \rho R(g(t_i))\, g(t_i)$.  By Theorem 9.6 in \cite{Ha} we have that $\ric(g(t)) \ge \rho \, R(g(t))\, g(t)$ holds for all $t \ge t_i$.  Since $t_i\to -\infty$, we get that
  \[
  \ric(g(t)) \ge \rho R(g(t))\, g(t), \qquad \mbox{for all} \quad t \in (-\infty,T).
  \]
By the result in \cite{BHS}, this yields that $(S^3,g(t))$ has constant positive sectional curvature, for all $t \in (-\infty,T)$, and hence our solution is a family of shrinking spheres.  \end{proof}

\sk

Let us now assume that $(S^3, g)$ is a $\kappa$-solution which is rotationally and reflection symmetric.  We have seen in
the previous subsection that $g$ may be expressed in the form \eqref{eqn-ggg} and that because of reflection symmetry $x = 0$ can be taken to correspond to a point of {\em maximum radius} $\psi$ for every $t \le t_0$.  We claim the following.

\begin{lemma}
  \label{lemma-point-q}
  There exist uniform constants $C < \infty$ and $t_0 \ll -1$ so that
  \[
  R(q,t) \le \frac{C}{|t|}, \qquad \mbox{\em for all} \, \,\, t \leq t_0
  \]
  where $q\in M$ corresponds to $x = 0$ and hence $s = 0$, for all $t \le t_0$.
\end{lemma}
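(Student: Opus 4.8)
The plan is to combine Lemma \ref{lemma-sphere} with Perelman's reduced-volume / reduced-distance machinery. By Lemma \ref{lemma-sphere} we may assume the asymptotic soliton of $(S^3, g(t))$ is the round cylinder $S^2 \times \R$ (if it were the sphere, the solution would be a family of shrinking round spheres, for which $R(q,t) = R(\cdot,t)$ is comparable to $1/|t|$ at every point and the estimate is immediate). So from now on we work with a genuinely cylindrical asymptotic soliton. Recall from Subsection \ref{subsec-blimit} that for each $\bt = -t > 0$ there is a point $q(\bt)$ with reduced distance $l(q(\bt), \bt) \le 3/2$, and that at the points $(q(\bt_i), t_i)$ along a suitable subsequence the parabolic rescalings converge to the cylinder. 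The first step is to observe that, by Perelman's monotonicity and the curvature bound \eqref{eq-curv-bound}, the reduced distance satisfies a universal quadratic-in-space, bounded-in-time estimate of the form $l(p, \bt) \le C_0$ whenever $p$ lies on a fixed unrescaled geodesic ball, together with the differential inequalities for $l$; this gives that $R(q(\bt),\bt)$ is comparable to $1/\bt$, i.e.\ there are uniform $c, C$ with $c/|t| \le R(q(\bt), t) \le C/|t|$ for $t \le t_0$, since on the cylinder at scale $\sqrt{|t|}$ the scalar curvature is exactly of that size and $l$ is bounded.

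The second step is to identify $q(\bt)$ with the equator point $q$ (the point $x = 0$), or at least to show $R$ at $x = 0$ is controlled by $R$ at $q(\bt)$. Here I would use the rotational and reflection symmetry together with the concavity $\psi_{ss} < 0$: the radius $\psi(\cdot, t)$ is maximized at $x = 0$, so $K_1 = (1 - \psi_s^2)/\psi^2$ is \emph{minimized} at the equator among all $x$, and $\psi_s = 0$ there gives $K_1(q,t) = 1/\psi(0,t)^2$. The scalar curvature is $R = 4K_0 + 2K_1$ with $K_0 = -\psi_{ss}/\psi \ge 0$. Thus controlling $R(q,t)$ amounts to controlling $\psi(0,t)$ from below and controlling $K_0(q,t) = -\psi_{ss}(0,t)/\psi(0,t)$ from above. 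For the lower bound on $\psi(0,t)$: since the asymptotic soliton is the cylinder, the rescaled radius $u(0,\tau) = \psi(0,t)/\sqrt{T-t}$ stays bounded below (it converges to $\sqrt 2$ along the relevant subsequence, and more robustly it cannot go to $0$ without forcing a cylindrical or spherical degeneration — one can feed this into the equation \eqref{eq-u} with a barrier, or simply invoke that $q(\bt)$ lies near the equator by symmetry and uses the established $R(q(\bt),\bt) \asymp 1/|t|$). For the upper bound on $K_0$: at an interior maximum of $\psi$ in $s$ we have $\psi_{ss} \le 0$, and a bound $|\psi_{ss}(0,t)| \le C \psi(0,t)/|t|$ follows from the curvature bound \eqref{eq-curv-bound} once $\psi(0,t) \asymp \sqrt{|t|}$ is known, because $K_0 \le \|\Rm\|$ only gives $K_0 \le C$ globally but near the equator the cylindrical limit forces $K_0 \to 0$ after rescaling.

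Putting these together: once we know $\psi(0,t) \ge c\sqrt{|t|}$ and that after parabolic rescaling the geometry near $x=0$ is $C^2$-close to the cylinder (which has $K_0 = 0$, $K_1 = 1/2$ in the normalization $u = \sqrt 2$), we get $R(q,t) = 4K_0(q,t) + 2K_1(q,t) \le 2/\psi(0,t)^2 + o(1/|t|) \le C/|t|$ for $t \le t_0$, which is the claim. The cleanest route is probably: (1) establish $R(q(\bt),\bt) \le C/\bt$ from the $l \le 3/2$ bound and Perelman's estimates, (2) use symmetry to locate $q(\bt)$ within bounded rescaled distance of the equator, (3) use Hamilton's Harnack / the bound $R_t \ge 0$ together with a distance-distortion estimate to compare $R(q,t)$ with $R(q(\bt), t)$ along the time-$t$ slice, obtaining $R(q,t) \le C R(q(\bt),\bt) \le C/|t|$.

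The main obstacle I expect is step (2)–(3): showing that the equator point $q$ (defined purely by the symmetry, $x=0$) is not far, in the rescaled metric, from Perelman's point $q(\bt)$ of small reduced distance, and that scalar curvature does not drop by more than a bounded factor between these two points on the same time slice. A priori $q(\bt)$ could drift; controlling it requires either a Harnack-type comparison of $R$ along the slice (using that $R$ is smooth, positive, and its spatial variation is controlled by $\|\nabla R\| \le \|\Rm\|^{3/2} \cdot \text{const}$ via Shi estimates, integrated over a rescaled-bounded distance) or a direct barrier argument on \eqref{eq-u} showing $u(\sigma,\tau)$ is bounded below on compact $\sigma$-intervals. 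Once that comparison is in hand, the rest is the routine curvature bookkeeping above.
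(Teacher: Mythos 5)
There is a genuine gap. Both of the two quantitative inputs your plan needs at the equator are either left open or obtained circularly. For the lower bound $\psi(0,t)\gtrsim\sqrt{|t|}$ you appeal to $u(0,\tau)\to\sqrt 2$, i.e.\ to the convergence of the rescaled flow around the fixed point $x=0$ to the cylinder; but in the paper that convergence is Proposition \ref{prop-back-limit}, which is stated \emph{after} and for the point $q$ \emph{provided by} Lemma \ref{lemma-point-q} (the curvature bound at $q$ is exactly what makes that rescaling limit available). What Perelman's reduced-distance theory gives for free is a cylindrical limit only around the drifting points $q(\bar t_i)$, not around the equator, so invoking it for $u(0,\tau)$ is circular. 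The same circularity affects your treatment of $K_0(q,t)$: ``the cylindrical limit forces $K_0\to 0$ after rescaling'' again presupposes the limit around $q$. Finally, the bridge you identify as the main obstacle --- locating $q(\bar t)$ within bounded rescaled distance of the equator and comparing $R$ along the time slice --- is not carried out, and it is genuinely delicate: the gradient estimate $|\nabla R|\le \eta R^{3/2}$ only transfers an upper bound on $R$ over distances small compared with $\eta^{-1}R^{-1/2}$, and no control on $d_t(q,q(\bar t))$ is established. So as written the proposal is a program, not a proof.

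For comparison, the paper's argument is much more elementary and avoids the reduced-distance points altogether. Since $\psi_{\max}(t)=\psi(0,t)$ and $\psi$ satisfies \eqref{eq-psi}, the maximum principle gives $\frac{d}{dt}\psi_{\max}\le -1/\psi_{\max}$, and integrating backwards yields $\psi_{\max}(t)^2\ge |t|$ for $t\le t_0$; since $\psi_s(0,t)=0$ this immediately gives $K_1(q,t)\le 1/|t|$, which is the lower bound on the radius you were missing. The second ingredient, $K_0(q,t)\le C\,K_1(q,t)$, is proved by contradiction: if $K_0(q,t_i)/K_1(q,t_i)\to\infty$, rescale at $(q,t_i)$ by $R(q,t_i)$ and use Perelman's compactness theorem for $\kappa$-solutions to extract a rotationally symmetric limit with $K_1^\infty\equiv 0$ and $K_0^\infty\not\equiv 0$; but $K_1^\infty\equiv 0$ forces $\psi_{\infty s}^2\equiv 1$, hence $\psi_{\infty ss}\equiv 0$ and $K_0^\infty\equiv 0$, a contradiction. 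Then $R(q,t)=4K_0+2K_1\le C/|t|$. If you want to salvage your route, you would need an independent proof that $q(\bar t)$ stays at bounded rescaled distance from the equator and a slice-wise Harnack comparison valid at scale $\sqrt{|t|}$; the maximum-principle bound on $\psi_{\max}$ plus the ratio argument above renders all of that unnecessary.
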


\begin{proof}
  By the maximum principle applied to $\psi$ that satisfies \eqref{eq-psi} we have
  \[
  \frac{d}{dt}\psi_{\max} \le - \frac{1}{\psi_{\max}}.
  \]
  Integrating this differential inequality on $[t, t_1]$, for a fixed $t_1$, yields that $\psi_{\max}^2(t) \ge 2|t| + \psi^2_{\max}(t_1) - 2 |\bar t_1| \geq |t|$,
  for all $t \le t_0 \ll -1$.  This implies the bound
  \begin{equation}
    \label{eq-K1-bound}
    K_1(q,t) = \frac{1 -\psi_s^2}{\psi^2} \le \frac{1}{|t|}, \qquad \mbox{for all}\,\, t \leq t_0 \ll -1.
  \end{equation}
  
  Next, we claim there exist uniform constants $C$ and $t_0 \ll -1$ so that
  \begin{equation}\label{eqn-K10}
    K_0(q,t) \le C\, K_1(q,t), \qquad \mbox{for all}\,\, t \leq t_0 \ll -1.
  \end{equation}
  To prove this claim we argue by
  contradiction.  Assume that the claim is not true, meaning there exist a sequence
  $t_i \to -\infty$ and $C_i \to \infty$ so that \begin{equation}
    \label{eq-ratio-big}
    K_0(q,t_i) \ge C_i K_1(q,t_i).
  \end{equation}
  We rescale the Ricci flow solution $g(\cdot,t)$ by $R(0,t_i)$ around $(q,t_i)$, where
  $q\in M$ corresponds to $s = 0$.  By Perelman's compactness theorem for
  $\kappa$-solutions, there exists a subsequence of rescaled solutions converging to
  another rotationally symmetric $\kappa$-solution, which, in view of
  \eqref{eq-ratio-big}, implies that on the limiting solution one has
  $K^{\infty}_1 \equiv 0$ and $K^\infty_0\not\equiv0$ on the limit
  $M_{\infty}$.  Since our limiting rotationally symmetric metric $g_{\infty}$ is of
  the form $g_{\infty} = ds_{\infty}^2 + \psi_{\infty}(s)^2 g_{\rm can}$, where
  $K^{\infty}_1 = \frac{1 - \psi_{\infty s}^2}{\psi_{\infty}^2} \equiv 0$, we get
  $\psi^2_{\infty s} \equiv 1$.  This implies that
  $\psi_{\infty,ss}\equiv 0$, which contradicts $K^\infty_0\not\equiv0$.
  We conclude that \eqref{eqn-K10} holds which combined with \eqref{eq-K1-bound} finishes the proof of the Lemma.
\end{proof}

By the previous discussion in this section (in particular Lemma \ref{lemma-sphere}), we may assume in the rest of the paper that {\em the asymptotic soliton of our $\kappa$-solution is a round cylinder}.
Define the parabolically rescaled metric
\[
\tilde{g}(\cdot,\tau) := \frac{1}{(-t)}\, g(\cdot,t), \qquad \tau = -\log(T - t).
\]
Let $q \in S^3$ be the point as in Lemma \ref{lemma-point-q}.  Then we have the following result.

\begin{proposition}[Proposition 3.1 in \cite{Br}]
  \label{prop-back-limit}
  Let $(S^3,g(\cdot,t))$ be a closed $\kappa$-solution whose asymptotic soliton is a round cylinder.  Then the rescaled solution
  $(S^3, \tilde{g}(\cdot,\tau))$ around a fixed point $q$ converges in Cheeger-Gromov sense to the round cylinder of radius $\sqrt{2}$.
\end{proposition}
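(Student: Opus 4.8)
The plan is to show that \emph{every} subsequential pointed Cheeger--Gromov limit of $(S^3,\tilde g(\cdot,\tau),q)$ as $\tau\to-\infty$ is the round cylinder $S^2\times\R$ of radius $\sqrt2$, and then to observe that this forces full convergence, since a homogeneous space is determined up to isometry independently of the choice of basepoint.

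\emph{Step 1: subsequential convergence.} Each rescaled metric $\tilde g(\cdot,\tau)$ is still $\kappa$-noncollapsed on all scales, and by Lemma~\ref{lemma-point-q} its scalar curvature at $q$, which equals $(-t)\,R(q,t)$, is $\le C$ uniformly for $\tau$ large negative. Feeding this bound at $q$ into Perelman's curvature estimate $|\nabla R|\le\eta(\kappa)\,R^{3/2}$ for $3$-dimensional $\kappa$-solutions (which is scale invariant), one bounds $R_{\tilde g(\tau)}$ on $B_{\tilde g(\tau)}(q,r)$ by a constant $C(r)$, uniformly in $\tau$, for every fixed $r>0$; together with $\kappa$-noncollapsedness this also yields a uniform positive lower bound for $\mathrm{inj}_{\tilde g(\tau)}(q)$. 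Hamilton's compactness theorem then provides, along any sequence $\tau_i\to-\infty$, a subsequence so that $(S^3,\tilde g(\cdot,\tau_i),q)$ converges in the pointed Cheeger--Gromov sense to some complete ancient $\kappa$-solution $(M_\infty,g_\infty,q_\infty)$.

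\emph{Step 2: the limit is the shrinking cylinder.} Choosing the origin of Perelman's reduced distance to be $q$ itself and testing with the constant curve $\gamma\equiv q$, one gets
\[
l(q,\bar t)\le\frac1{2\sqrt{\bar t}}\int_0^{\bar t}\sqrt{\bar t'}\,R(q,-\bar t')\,d\bar t'\le C+\frac{C_0\,|t_0|^{3/2}}{3\sqrt{\bar t}},
\]
where for $\bar t'\ge|t_0|$ we used $R(q,-\bar t')\le C/\bar t'$ from Lemma~\ref{lemma-point-q} and for $\bar t'<|t_0|$ the bound \eqref{eq-curv-bound} (together with smoothness on the compact interval $[t_0,0]$). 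Thus $l(q,\bar t)$ stays bounded as $\bar t\to\infty$, so Perelman's reduced volume $\tilde V_q(\bar t)$ is nonincreasing, nonnegative, and bounded below by a positive constant, hence converges to some $V_\infty>0$. Passing to the Cheeger--Gromov limit and invoking the equality case of Perelman's reduced-volume monotonicity, $(M_\infty,g_\infty)$ is a nonflat gradient shrinking Ricci soliton. In dimension three such a soliton is, by the classification, the round sphere (or a spherical space form), the round cylinder $S^2\times\R$, or a quotient of the latter. A compact limit is impossible: convergence to a compact round quotient would give $\ric(g(t_i))\ge\rho\,R(g(t_i))\,g(t_i)$ for large $i$, and then the argument of Lemma~\ref{lemma-sphere} would force $(S^3,g(t))$ to be a family of shrinking spherical space forms, whose asymptotic soliton is a sphere, contradicting our standing assumption. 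The quotients of $S^2\times\R$ are ruled out by rotational and reflection symmetry together with the fact that the $S^2$-orbit through $q$ has maximal radius. Hence $M_\infty=S^2(r)\times\R$, and $r=\sqrt2$ because the shrinking round cylinder soliton is precisely $u\equiv\sqrt2$ in the rescaled variables (Remark~\ref{rem-cylinder}), which corresponds to an $S^2$-factor of radius $\sqrt2$ in the limit defining $\tilde g$. Since this limit is the same along every subsequence, $(S^3,\tilde g(\cdot,\tau),q)$ converges to $S^2(\sqrt2)\times\R$.

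\emph{Main obstacle.} The real work is in Step~2's transfer of Perelman's asymptotic-soliton conclusion — which is naturally anchored at the auxiliary points $q(\bar t)$ with $l\le 3/2$ — to the geometrically meaningful fixed point $q$; the bridge is the reduced-distance bound $l(q,\bar t)\le\Lambda$, and this rests entirely on the curvature decay $R(q,t)\le C/|t|$ of Lemma~\ref{lemma-point-q}. Beyond that, the only delicate points are the careful use of Perelman's reduced-volume rigidity (to conclude the limit is a soliton, not merely some ancient $\kappa$-solution) and the symmetry bookkeeping excluding the quotients of $S^2\times\R$; neither introduces a genuinely new difficulty beyond organizing these Perelman-type inputs.
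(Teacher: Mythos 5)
The paper offers no argument of its own for this statement: its ``proof'' is literally the citation to Proposition 3.1 of \cite{Br}. Your reconstruction follows essentially the route behind that citation (Perelman's \S 11 argument as used by Brendle): the decisive step is the bound $l(q,\bar t)\le\Lambda$ obtained by testing the reduced distance with the constant curve at $q$ and feeding in $R(q,t)\le C/|t|$ from Lemma~\ref{lemma-point-q}; this lets you run Perelman's reduced-volume monotonicity and its rigidity case at the \emph{fixed} basepoint $q$, so the rescaled limit around $q$ is a nonflat gradient shrinker, which the classification plus the argument of Lemma~\ref{lemma-sphere} and the standing hypothesis force to be the cylinder of radius $\sqrt2$. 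So in substance you are reproducing the proof the paper outsources, and the overall strategy is sound.

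Two points need repair. First, in Step~1 the claim that $R_{\tilde g(\tau)}(q)\le C$ together with $|\nabla R|\le\eta R^{3/2}$ bounds $R_{\tilde g(\tau)}$ on $B_{\tilde g(\tau)}(q,r)$ for \emph{every} $r$ is not correct as argued: integrating the gradient estimate gives $R^{-1/2}(x)\ge R^{-1/2}(q)-\tfrac{\eta}{2}d(x,q)$, hence an upper bound only on a ball of radius about $\tfrac{2}{\eta}R_{\tilde g(\tau)}(q)^{-1/2}$, with the bound degenerating at its edge, and no iteration extends it to arbitrary $r$. The correct input is Perelman's compactness theorem for $\kappa$-solutions, i.e.\ the universal bounded-curvature-at-bounded-distance function $\omega$ with $R(x,t)\le R(q,t)\,\omega\bigl(R(q,t)\,d_t(x,q)^2\bigr)$; since $\omega$ is nondecreasing, this combines with $R_{\tilde g(\tau)}(q)\le C$ to give the $C(r)$ you want. (Alternatively, drop Step~1 and extract the limit inside Perelman's asymptotic-soliton argument itself, \S 11.2 of \cite{Pe1} (see \cite{KL}), which is exactly what the bounded reduced distance at $q$ is for; note also that the nonflatness of the limit must come, as you indicate, from $V_\infty<1$, since a priori $(-t)R(q,t)$ could degenerate.) Second, your exclusion of the $\Z_2$-quotient of the cylinder ``by symmetry and maximal radius'' is too thin as stated; the clean argument is topological: $(S^2\times\R)/\Z_2$ contains an embedded $\R P^2$, and $\R P^2$ does not embed in $S^3$, so this quotient cannot arise as a pointed Cheeger--Gromov limit of metrics on $S^3$. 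With these two repairs your proof is correct and coincides with the cited one.
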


\begin{proof}
  See Proposition 3.1 in \cite{Br}.
\end{proof}

\subsection{Radially symmetric barriers from \cite{Br}}

S.  Brendle in \cite{Br} constructed barriers for equation \eqref{eqn-Y}.  His construction gives the following result which will be used in the next sections.

\begin{prop}[Brendle, Propositions 2.4 and 2.5 in \cite{Br}]\label{prop-barriers}
  There exists a one parameter family $Y_a:=Y_a(u)$ of super-solutions to the elliptic equation
  \begin{equation}\label{eqn-Ya-Simon}
    Y_a Y_a'' - \frac u2 \, Y_a' - \frac 12\, (Y_a')^2 + (1 - Y)\, \frac{Y_a'}{u} + 2 (1-Y_a) \, \frac{Y_a}{u^2} <0
  \end{equation}
  which are defined on $u \in [r^* a^{-1}, \frac 98 \sqrt{2} ]$, for a fixed number $r^* >0$.  Moreover, there exists a
  small constant $\eta >0$ which is independent of $a$ and a smooth function $\zeta(s)$ with $2+\zeta(\sqrt{2}) =1/4$
  such that
  \begin{equation}\label{eqn-asym-Ya}
    Y_a(u) = a^{-2} \, (2 u^{-2} -1) + a^{-4} \, (8 u^{-4} + \zeta(u) ) + O(a^{-6})
  \end{equation}
  for all $u \in [\sqrt{2}-\eta, \sqrt{2}+\eta]$ and $a \gg 1$.
  
\end{prop}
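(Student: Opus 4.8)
I indicate a proof (the result is Brendle's; see \cite{Br} for complete details). Write
\[
\mathcal F[Y] := Y Y_{uu} - \frac u2\, Y_u - \frac12\, Y_u^2 + (1-Y)\frac{Y_u}{u} + 2(1-Y)\frac{Y}{u^2},
\]
so that \eqref{eqn-Y} reads $Y_\tau = \mathcal F[Y]$ and \eqref{eqn-Ya-Simon} asks for $\mathcal F[Y_a]<0$; a $\tau$-independent $Y_a$ with $\mathcal F[Y_a]<0$ is then a strict supersolution of the rescaled flow, which is all a barrier argument needs. The plan is to glue a perturbation of the cylinder $Y\equiv0$, valid for $u$ near $\sqrt2$, to a dilated copy of Bryant's \emph{singular} steady soliton \cite{Bry}, valid for $u\sim r^*a^{-1}$. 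The key structural fact is that the stationary part of \eqref{eq-bY},
\[
\mathcal G[\bar Y] := \bar Y\bar Y_{\psi\psi} - \frac12\,\bar Y_\psi^2 + (1-\bar Y)\frac{\bar Y_\psi}{\psi} + 2(1-\bar Y)\frac{\bar Y}{\psi^2},
\]
is exactly the equation $\mathcal G[\bar Y]=0$ satisfied by the profile $\bar Y=\psi_s^2$ of a rotationally symmetric steady gradient Ricci soliton; it is scale invariant, $\mathcal G[\bar Y(\lambda\,\cdot\,)]=\lambda^2(\mathcal G[\bar Y])(\lambda\,\cdot\,)$, and $\mathcal F[Y]=\mathcal G[Y]-\frac u2 Y_u$, so that any dilated soliton profile $Y_a(u)=\bar Y_B(au)$ solves, \emph{exactly}, $\mathcal F[Y_a]=-\frac u2\,Y_{a,u}$.

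First I would treat the region $u$ near $\sqrt2$ perturbatively off the cylinder. Since the second-order term of $\mathcal F$ carries the factor $Y$, the linearisation of $\mathcal F$ at $Y\equiv0$ is the \emph{first-order} operator $\mathcal L h := -\frac u2 h' + \frac{h'}u + \frac{2h}{u^2}$, whose kernel is spanned by $v(u):=2u^{-2}-1=(2-u^2)/u^2$ — the analogue in the $Y$-variable of the neutral mode $\sigma^2-2$ of Theorem~\ref{thm-asym}(i). A direct Taylor expansion gives $\mathcal F[a^{-2}v]=a^{-4}Q(u)+O(a^{-6})$ with $Q(u)=16u^{-6}-8u^{-4}-2u^{-2}$, and $Q(\sqrt2)=-1<0$, so $a^{-2}v$ is already a strict supersolution near $\sqrt2$. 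I would then solve the corrector equation $\mathcal L w=-Q$ — a linear first-order ODE, one integrating factor away — set $Y_a=a^{-2}v+a^{-4}w+O(a^{-6})$, and fix the free multiple of the homogeneous solution $v$ inside $w$ by the matching in the next step; evaluating at $u=\sqrt2$ yields $2+\zeta(\sqrt2)=\tfrac14$, i.e. $Y_a(\sqrt2)=\tfrac14 a^{-4}+O(a^{-6})$, and writing $w=8u^{-4}+\zeta(u)$ with $\zeta$ smooth reproduces \eqref{eqn-asym-Ya}. The constants $2$, $8$ are forced: $\bar Y_B(\psi)=2\psi^{-2}+8\psi^{-4}+O(\psi^{-6})$ is the far-field expansion of a normalised Bryant soliton, and near $u=\sqrt2$ this $Y_a$ agrees with a dilated Bryant profile up to a constant shift of size $a^{-2}$ — exactly the shift that cancels the $O(a^{-2})$ term in $\mathcal F[\bar Y_B(au)]=-\frac u2 Y_{a,u}$ — and an $O(a^{-4})$ error.

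Second, for $u\sim r^*a^{-1}$, where $a^{-2}v\sim 2(au)^{-2}$ has grown to order one, I would take $Y_a$ to be (a dilate of) Bryant's singular steady soliton, which solves $\mathcal G=0$ exactly, so that there $\mathcal F[Y_a]=-\frac u2\,Y_{a,u}$, a quantity whose sign is the sign of the monotonicity of $\bar Y_B$. Here it is essential to use the singular Bryant soliton rather than the smooth one: for the smooth soliton $\bar Y_B$ is monotone decreasing, which gives a subsolution, whereas the singular soliton is the one for which $-\tfrac u2 Y_{a,u}<0$ on the relevant range, and the scale invariance of $\mathcal G$ makes this sign condition uniform in $a$. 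In the overlap $r^*a^{-1}\ll u\ll 1$ both pieces equal $2(au)^{-2}$ to leading order, and I would patch them with a cut-off, absorbing the $\mathcal F$-error created by the cut-off into the strictly negative leading terms $a^{-4}Q(u)$ and $-\frac u2 Y_{a,u}$, lowering $Y_a$ by a further small multiple of $v$ if necessary. This produces, for every $a\gg1$, a function on $[r^*a^{-1},\tfrac98\sqrt2]$ with $\mathcal F[Y_a]<0$ and the asymptotics \eqref{eqn-asym-Ya}.

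The hard part will be this last, uniform-in-$a$ step: making the defect \emph{strictly} negative on an interval whose length grows like $a$, and in particular controlling the overlap, where a perturbative expansion around the cylinder must be matched to the non-explicit Bryant profile. This forces the asymptotic analysis of Bryant's ODE at both ends (which is also what supplies the constants $2$, $8$ and the function $\zeta$, and selects the correct singular soliton), after which the homogeneity of the leading terms together with the scale invariance of $\mathcal G$ should reduce the verification of $\mathcal F[Y_a]<0$ on each dyadic subinterval of $[r^*a^{-1},\tfrac98\sqrt2]$ to a single estimate independent of $a$.
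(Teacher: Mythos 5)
The paper does not actually prove this proposition: it is imported verbatim from Brendle's Propositions 2.4 and 2.5 in \cite{Br}, and the only content added here is the remark that the normalization $u=\psi/\sqrt{-t}$ (versus Brendle's $\psi/\sqrt{-2t}$) changes the interval to $[r^*a^{-1},\tfrac98\sqrt2]$ and the constants in \eqref{eqn-asym-Ya}. So your sketch has to be measured against Brendle's construction. At the level of the outer (cylindrical) region your computations are correct and consistent with it: $v=2u^{-2}-1$ does span the kernel of the linearization of \eqref{eqn-Y} at the cylinder, one has the exact identity $\mathcal{F}[a^{-2}v]=a^{-4}\bigl(16u^{-6}-8u^{-4}-2u^{-2}\bigr)$ with value $-a^{-4}$ at $u=\sqrt2$, the constants $2$ and $8$ and the normalization $2+\zeta(\sqrt2)=\tfrac14$ are forced by the Bryant far field $c_0f^{-2}+2c_0^2f^{-4}$, and the role of the $-a^{-2}$ shift is exactly as you say; all of this matches the form $Y_a(u)=\varphi(au/\sqrt2)-a^{-2}+a^{-4}\zeta(u)$ of Brendle's barrier that the paper quotes later in the proof of Proposition \ref{prop-Y-bound-above}.

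The genuine gap is in the tip region and the matching, i.e.\ precisely the content of Brendle's Propositions 2.4--2.5, and your proposed mechanism there does not work as stated. You claim the singular soliton is "essential" because for it $-\tfrac u2Y_{a,u}<0$ on the relevant range; but a profile that agrees with $2(au)^{-2}$ to leading order in the overlap is decreasing there, so the pure dilated soliton has defect $\mathcal{F}=-\tfrac u2Y_{a,u}>0$ on at least the outer part of the soliton region (this is the same sign problem as for the smooth Bryant soliton), and the strict inequality \eqref{eqn-Ya-Simon} must instead be produced by the correction terms (the $-a^{-2}$ shift and a solution of the ODE linearized at the soliton), which is exactly the uniform-in-$a$ estimate you defer as "the hard part." The actual reason the singular solitons of \cite{Bry} are used is visible in how the paper applies the barrier (Propositions \ref{prop-Br1}, \ref{prop-Br2} and \ref{prop-Y-bound-above}): the barrier satisfies $Y_a(a^{-1}r^*)=2+\beta_a(r^*)\ge 2>1\ge Y$, so the comparison region can be cut off at $u=r^*a^{-1}$ with the ordering guaranteed at that inner boundary and no smooth closing condition imposed -- not because of the sign of $Y_{a,u}$. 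Two further points: gluing two strict supersolutions by a smooth cutoff is not automatically a supersolution (the cutoff errors involve the mismatch of the two pieces and their derivatives, so the matching to order $a^{-4}$ must be proved first, and Brendle in fact builds a single globally defined modification of the dilated soliton rather than a cutoff gluing); and solving the outer corrector equation exactly, $\mathcal{L}w=-Q$, would cancel the only definitely negative term you have at order $a^{-4}$, leaving a defect of indefinite sign at order $a^{-6}$ -- $\zeta$ should be dictated by the matching while the negativity near $\sqrt2$ is retained. So: correct skeleton and correct local computations near the cylinder, but the supersolution inequality on all of $[r^*a^{-1},\tfrac98\sqrt2]$, uniformly in $a$, is not established, and the sign argument you offer at the tip would fail.
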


Note that Brendle in \cite{Br} considers solutions of
\[
Y_a Y_a'' - \frac u2 \, Y_a' - \frac 12\, (Y_a')^2 + (1 - Y_a)\, \frac{Y_a'}{u} + 2 (1-Y_a) \, \frac{Y_a}{u^2} <0
\]
on $u \in [r^* a^{-1}, \frac 98 ]$ since he scales ${\ds u= \frac{\psi}{\sqrt{-2t}}}$, but we will consider solutions of \eqref{eqn-Y} on
$u \in [a^{-1}\, r^*, \frac 98 \sqrt{2} ]$, for a different constant $r^*$, to be consistent with our scaling ${\ds u= \frac{\psi}{\sqrt{-t}}}$.
We refer the reader to \cite{Br} Section 2 (Propositions 2.4 and 2.5) for the details of the construction of the barriers $Y_a$
and their properties.

\smallskip

We next state the following result, which is Proposition 2.8 in \cite{Br} adopted to our notation.

\begin{proposition}[Brendle, Proposition 2.8 in \cite{Br}]\label{prop-Br1}
  There exists a large number $K$ with the following property.  Suppose that $a \ge K$ and $ \bar \tau \ll -1$.  Moreover, suppose that $\bar u(\tau)$ is a function satisfying $\left| \bar u(\tau) - \sqrt{2} \right| \le \frac{1}{100}\, a^{-2}$ and $Y(\bar u(\tau),t) \le \frac{1}{32}\, a^{-4}$ for all $\tau \le \bar{\tau}$.  Then
  \[
  Y(u,\tau ) \le Y_a(u), \qquad \mbox{for all} \quad r_* a^{-1} \le u \le \bar u(\tau), \, \,\, \tau \le \bar{\tau}.
  \]
  In particular, $Y(u,\tau) \le C\, a^{-2}$ for all $\frac12 \le u \le \bar u(\tau)$ and $\tau \le \bar{\tau}$.
\end{proposition}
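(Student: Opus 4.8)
The plan is to use Proposition \ref{prop-barriers} together with a maximum principle argument on the parabolic region $\{(u,\tau) : r_* a^{-1} \le u \le \bar u(\tau),\ \tau \le \bar\tau\}$, comparing $Y(u,\tau)$ to the stationary super-solution $Y_a(u)$. First I would fix $a \ge K$ (with $K$ to be determined) and $\bar\tau \ll -1$, and observe that both $Y$ (by equation \eqref{eqn-Y}) and $Y_a$ (by \eqref{eqn-Ya-Simon}) satisfy, respectively, $Y_\tau = \cL[Y]$ and $0 > \cL[Y_a]$, where $\cL$ is the (quasilinear, non-autonomous through the coefficient $1-Y$) operator appearing on the right of \eqref{eqn-Y} minus $\frac u2 Y_u$. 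The difference $w := Y - Y_a$ then satisfies a linear parabolic inequality $w_\tau \le a_{uu} w_{uu} + b\, w_u + c\, w$ on the region in question, obtained by subtracting and using the mean-value theorem on the nonlinear terms; since the region is bounded in $u$ and the coefficients are controlled there (using that $Y$ stays small, which is part of what we are proving — so this must be set up as a continuity/bootstrap argument, see below), the maximum principle applies provided we control $w$ on the parabolic boundary.

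The parabolic boundary has three pieces. On the \emph{inner boundary} $u = r_* a^{-1}$: here Brendle's barrier $Y_a$ is constructed precisely so that it blows up (or is large enough) as $u \downarrow r_* a^{-1}$ — this is the ``tip'' end of the barrier — so $Y(u,\tau) \le Y_a(u)$ holds there for free, since $Y = u_\sigma^2 \le 1$ is bounded while $Y_a \to$ large. On the \emph{lateral boundary} $u = \bar u(\tau)$: by hypothesis $|\bar u(\tau) - \sqrt 2| \le \frac{1}{100} a^{-2}$, so $\bar u(\tau) \in [\sqrt2 - \eta, \sqrt2 + \eta]$ for $a$ large, and we may use the expansion \eqref{eqn-asym-Ya}, which gives $Y_a(\bar u(\tau)) = a^{-2}(2\bar u^{-2} - 1) + O(a^{-4}) \ge \tfrac12 a^{-2} - O(a^{-4}) \ge \tfrac{1}{32} a^{-4}$ for $a \ge K$; combined with the hypothesis $Y(\bar u(\tau),\tau) \le \tfrac{1}{32} a^{-4}$ this gives $Y \le Y_a$ there. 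On the \emph{initial time slice} — here is the one subtlety: the region is $\tau \le \bar\tau$, so ``initial time'' is $\tau \to -\infty$, and we need $\limsup_{\tau \to -\infty}(Y - Y_a) \le 0$ uniformly on $r_* a^{-1} \le u \le \bar u(\tau)$. This follows from Proposition \ref{prop-back-limit}: as $\tau \to -\infty$ the rescaled solution converges to the cylinder of radius $\sqrt 2$, on which $Y \equiv 0$, so $Y \to 0$ locally uniformly, while $Y_a > 0$ on the relevant range away from the tip; near the tip $u = r_* a^{-1}$ we again use that $Y_a$ is large there. One must be slightly careful to combine the convergence-to-cylinder (valid on compact $u$-ranges bounded away from $0$) with the inner-boundary estimate (valid near $u = r_* a^{-1}$), but these two ranges overlap, so they patch.

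The \textbf{main obstacle} is the circularity in the coefficient bounds: to apply the maximum principle comparison cleanly one wants $Y$ to already be bounded (say $Y \le C a^{-2}$) so that the coefficient $1 - Y$ and the linearization are controlled on the whole region, but that bound is part of the conclusion. I would resolve this by a standard continuity/open-closed argument in $\tau$: let $\bar\tau^* \le \bar\tau$ be the infimum of times such that $Y(u,\tau) \le Y_a(u)$ holds on $r_* a^{-1} \le u \le \bar u(\tau)$ for all $\tau \in [\bar\tau^*, \bar\tau]$; on that set the desired bound $Y \le Y_a = O(a^{-2})$ holds, so the coefficients are controlled and the strict super-solution property \eqref{eqn-Ya-Simon} gives strict inequality in the interior, which (with the boundary control above) prevents first contact and shows the set is both open and closed in $(-\infty,\bar\tau]$, hence all of it. Once $Y(u,\tau) \le Y_a(u)$ is established on the full region, the final assertion $Y(u,\tau) \le C a^{-2}$ for $\tfrac12 \le u \le \bar u(\tau)$ follows immediately by plugging any such $u$ (which lies in a fixed compact subset of $(r_* a^{-1}, \tfrac98\sqrt2]$ once $a$ is large) into the explicit barrier and using \eqref{eqn-asym-Ya} or the crude bound $Y_a(u) \le C a^{-2}$ valid on $[\tfrac12, \tfrac98\sqrt2]$.
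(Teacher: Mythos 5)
The paper itself does not prove Proposition \ref{prop-Br1}: it is quoted verbatim (up to scaling conventions) from Proposition 2.8 of \cite{Br}, and the closest argument the authors actually write out is the analogous barrier comparison in Proposition \ref{prop-Y-bound-above}. Your overall scheme --- compare $Y$ with the stationary strict supersolution $Y_a$ on the region $r_*a^{-1}\le u\le \bar u(\tau)$, handling the inner boundary by $Y=u_\sigma^2\le 1\le Y_a(r_*a^{-1})$ --- is indeed the scheme of Brendle's proof and of Proposition \ref{prop-Y-bound-above}. (Incidentally, your continuity/bootstrap device is not needed: since $0\le Y\le 1$, $u\ge r_*a^{-1}$ and $Y_a>0$, a first-contact argument at an interior touching point, using $Y\,Y_{uu}\le Y_a Y_a''$ and the strict inequality in \eqref{eqn-Ya-Simon}, closes directly once the parabolic boundary is controlled.)

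Two of your boundary steps, however, have genuine gaps. First, the lateral boundary: your inequality $a^{-2}(2\bar u^{-2}-1)\ge \tfrac12 a^{-2}-O(a^{-4})$ is false. Since $|\bar u(\tau)-\sqrt2|\le \tfrac1{100}a^{-2}$, the leading term $a^{-2}\bigl(2\bar u(\tau)^{-2}-1\bigr)$ is only $O(a^{-4})$ and can be \emph{negative}, of size up to roughly $\tfrac{\sqrt2}{100}a^{-4}$. The desired bound $Y_a(\bar u(\tau))\ge \tfrac1{32}a^{-4}$ is true, but only because of the second-order term in \eqref{eqn-asym-Ya}: $a^{-4}\bigl(8u^{-4}+\zeta(u)\bigr)\approx \bigl(2+\zeta(\sqrt2)\bigr)a^{-4}=\tfrac14 a^{-4}$, which dominates the possibly negative leading term; this is exactly why the constants $\tfrac1{100}$, $\tfrac1{32}$ and the value $2+\zeta(\sqrt2)=\tfrac14$ appear in the statement, and it is the delicate heart of the proposition (compare the bound $Y_a(u)\ge a^{-2}(2u^{-2}-1)+\tfrac1{100}a^{-4}$, Proposition 2.9 in \cite{Br}, invoked in the proof of Proposition \ref{prop-Y-bound-above}). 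Second, the initial time slice: Proposition \ref{prop-back-limit} gives Cheeger--Gromov convergence to the cylinder on compact sets around the equator, where $u\to\sqrt2$; it does not give $Y(u,\tau)\to 0$ uniformly (or even pointwise) on the fixed range $r_*a^{-1}\le u\le\bar u(\tau)$, because points with $u$ bounded away from $\sqrt2$ escape every compact neighborhood of $q$ in the rescaled picture, so your ``patching'' does not cover the intermediate range of $u$. What is actually available is the weaker statement $\liminf_{\bar\tau\to-\infty}\,\sup_{u\ge r_*a^{-1}}Y(u,\bar\tau)=0$ (Lemma 2.7 in \cite{Br}), so the comparison must be started at suitably chosen very negative times from that sequence and then propagated forward; this is precisely how the paper runs the corresponding step in the proof of Proposition \ref{prop-Y-bound-above}.
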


The above proposition will play crucial role in Section \ref{sec-cylindrical} as it will allow us to introduce cut-off functions supported in the parabolic region of our solution.  For this application we will
need to have the result of the Proposition \ref{prop-Br1} holding up to the maximum point of our solution $u(\sigma,\tau) $ of \eqref{eq-u}.  Since we have assumed reflection symmetry this corresponds to $\sigma=0$.
However at the maximum $u(0,\tau)$ we have $Y:=u_\sigma^2=0$ and equation \eqref{eqn-Y} becomes degenerate.  In the following consequence of Proposition \ref{prop-Br1}, we justify that the comparison principle
can be extended up to $Y=0$ and allow us to have the result in this proposition with $u(\tau):= u(0,\tau) = \max u(\cdot,\tau)$.

\smallskip

\begin{proposition}[Corollary of Proposition 2.8 in \cite{Br}]\label{prop-Br2}
  There exists a large number $K$ with the following property.  Suppose that $a \ge K$ and $ \bar \tau \ll -1$.
  Moreover, suppose that $\left| u(0, \tau) - \sqrt{2} \right| \le \frac{1}{200}\, a^{-2}$ for all $\tau \le \bar{\tau}$.  Then
  \[
  Y(u,\tau ) \le Y_a(u), \qquad \mbox{for all} \quad r_* a^{-1} \le u \le u(0,\tau), \,\,\, \tau \le \bar{\tau}.
  \]
  In particular, $Y(u,\tau) \le C\, a^{-2}$ for $\frac12 \le u \le u(0,\tau)$ and all $\tau \le \bar{\tau}$.
\end{proposition}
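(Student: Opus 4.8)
The plan is to invoke Proposition~\ref{prop-Br1} with the particular choice $\bar u(\tau):=u(0,\tau)=\max_{\sigma}u(\cdot,\tau)$, which under reflection symmetry is attained at $\sigma=0$. Its two hypotheses then hold almost for free: the closeness requirement $|\bar u(\tau)-\sqrt2|\le\frac1{100}a^{-2}$ follows from the assumption $|u(0,\tau)-\sqrt2|\le\frac1{200}a^{-2}$ (this is the only place the strengthened constant $\frac1{200}$ is used), and the smallness requirement $Y(\bar u(\tau),\tau)\le\frac1{32}a^{-4}$ is trivial since $u_\sigma(0,\tau)=0$ gives $Y(u(0,\tau),\tau)=u_\sigma(0,\tau)^2=0$. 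What is \emph{not} immediate is that the proof of Proposition~\ref{prop-Br1} (Brendle's Proposition~2.8 in \cite{Br}) runs a comparison argument for equation \eqref{eqn-Y}, which is degenerate parabolic exactly where $Y=0$, i.e.\ at $u=u(0,\tau)$; Brendle works with $\bar u(\tau)$ strictly below the maximum, where $Y(\bar u(\tau),\tau)>0$, so that \eqref{eqn-Y} is uniformly parabolic up to the right endpoint. Hence the real task is to check that the comparison argument still closes when the right endpoint lands on the degenerate locus.

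For this I would reproduce Brendle's maximum principle argument --- using, as he does, a suitable space- and time-dependent perturbation $\hat Y_a$ of $Y_a$ that is a strict supersolution of \eqref{eqn-Y} --- on the region $\Omega:=\{(u,\tau)\,:\,r_*a^{-1}\le u\le u(0,\tau),\ \tau\le\bar\tau\}$. Relative to Brendle's setting, $\Omega$ has exactly one extra boundary piece, the curve $\{u=u(0,\tau)\}$, and on it $Y=u_\sigma^2=0$, whereas writing $u(0,\tau)=\sqrt2+\theta a^{-2}$ with $|\theta|\le\frac1{200}$ the expansion \eqref{eqn-asym-Ya} together with the identity $2+\zeta(\sqrt2)=\frac14$ gives
\[
Y_a\bigl(u(0,\tau)\bigr)=\Bigl(\tfrac14-\sqrt2\,\theta\Bigr)a^{-4}+O(a^{-6})\ \ge\ \tfrac1{32}a^{-4}>0
\]
for $a\gg1$. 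Thus $Y-\hat Y_a<0$ on this new boundary piece, so a first contact of $Y$ with $\hat Y_a$ cannot occur there; on the rest of the parabolic boundary of $\Omega$ --- the line $u=r_*a^{-1}$, where the barrier is large by construction, and the end $\tau\to-\infty$, where Proposition~\ref{prop-back-limit} forces $Y\to0$ on the cylindrical part while $Y$ stays bounded and $Y_a$ positive on the tip part --- and in the interior, Brendle's estimates apply verbatim. The key point is that equation \eqref{eqn-Y} is used only at a hypothetical contact point, where $Y=\hat Y_a>0$ and the equation is non-degenerate, so the degeneracy at $u=u(0,\tau)$ never enters; one also records that $Y(\cdot,\tau)$ extends as a $C^1$ function up to $u=u(0,\tau)$ with $Y_u(u(0,\tau),\tau)=2u_{\sigma\sigma}(0,\tau)<0$ (smoothness of $g$ on $S^3$ together with $\psi_{ss}<0$), so that $\Omega$ and its boundary are regular enough to run the argument.

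This yields $Y(u,\tau)\le Y_a(u)$ for all $r_*a^{-1}\le u\le u(0,\tau)$ and $\tau\le\bar\tau$. The concluding ``in particular'' is then obtained exactly as in Proposition~\ref{prop-Br1}: on $[\tfrac12,\tfrac98\sqrt2]\supseteq[\tfrac12,u(0,\tau)]$ one has $Y_a\le Ca^{-2}$ from the size bounds on the barriers in Proposition~\ref{prop-barriers} (the leading term $a^{-2}(2u^{-2}-1)$ is $O(a^{-2})$ on $[\tfrac12,\sqrt2]$, and $Y_a$ is of order $a^{-4}$ near $u=\sqrt2$), whence $Y\le Ca^{-2}$ on $[\tfrac12,u(0,\tau)]$.

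The main obstacle I anticipate is the second step: making fully rigorous that Brendle's comparison scheme does not secretly use uniform parabolicity all the way to the right endpoint --- i.e.\ that the degenerate boundary of $\Omega$ is genuinely inert --- and carrying the perturbation $\hat Y_a$ and the $\tau\to-\infty$ end of $\Omega$ along exactly as in \cite{Br} so that the first-contact argument is legitimate; the numerics of \eqref{eqn-asym-Ya} that give $Y_a(u(0,\tau))\ge\frac1{32}a^{-4}$ under the $\frac1{200}$-closeness hypothesis are then a one-line computation. One could instead try to approximate with the endpoints $\bar u_\rho(\tau):=u(0,\tau)-\rho$ and let $\rho\downarrow0$, but verifying the hypothesis $Y(\bar u_\rho(\tau),\tau)\le\frac1{32}a^{-4}$ for \emph{all} $\tau\le\bar\tau$ is not automatic, so the direct extension of the comparison principle above seems the cleaner route.
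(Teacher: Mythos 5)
Your proposal is correct, but it takes a genuinely different --- and heavier --- route than the paper. The paper never reopens Brendle's proof of Proposition \ref{prop-Br1}: it applies that proposition as a black box with the endpoint $\bar u(\tau):=u(\sigma,\tau)$ for a small \emph{fixed} $\sigma>0$ and then lets $\sigma\to 0$. This is essentially the approximation strategy you dismissed at the end, except parametrized by $\sigma$ rather than by a shift in $u$, and in that parametrization the hypothesis you worried about is automatic: the smooth uniform convergence $u(\cdot,\tau)\to\sqrt{2}$ on compact sets gives uniform bounds on $u_{\sigma\sigma}$ for $|\sigma|\le 1$, $\tau\le\bar\tau$, so $u_\sigma(0,\tau)=0$ yields $Y(u(\sigma,\tau),\tau)=u_\sigma^2(\sigma,\tau)\le C\sigma^2\le\tfrac1{32}a^{-4}$ for all $\tau\le\bar\tau$ once $\sigma\le\delta(\bar\tau,a)$, and likewise $|u(\sigma,\tau)-\sqrt2|\le\tfrac1{100}a^{-2}$ --- this continuity slack is where the strengthened constant $\tfrac1{200}$ is really spent in the paper, not merely in implying the $\tfrac1{100}$ bound at $\sigma=0$. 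The conclusion of Proposition \ref{prop-Br1} for each such $\sigma$ then passes to the limit $\sigma\to 0$. Your route instead re-runs Brendle's comparison argument on the domain whose right boundary is the degenerate curve $u=u(0,\tau)$; your key points there are sound --- $Y=0$ on that boundary while \eqref{eqn-asym-Ya} together with $2+\zeta(\sqrt2)=\tfrac14$ gives $Y_a(u(0,\tau))\ge\tfrac1{32}a^{-4}$ for $a\gg1$, and equation \eqref{eqn-Y} is only invoked at contact points where $Y=\hat{Y}_a>0$, so the degeneracy is inert --- but the cost is that the perturbed barrier, the moving endpoint, and the closing of the domain as $\tau\to-\infty$ must all be carried through Brendle's argument again (your ``apply verbatim'' is where that unverified work sits). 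The paper's limiting argument buys exactly the same conclusion from the \emph{statement} of Proposition \ref{prop-Br1} alone, at the price of one equicontinuity observation; your argument, once the details of the comparison scheme are written out, gives a self-contained extension of the comparison principle up to the degenerate locus.
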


\begin{proof} We will apply Proposition \ref{prop-Br1} for $u(\sigma,\tau)$, with $|\sigma| \leq \delta \ll 1$, and then let $\sigma \to 0$.  Since we have assumed reflection symmetry it is sufficient to assume that $\sigma >0$.
  Let $a >0$ be such that $\left| u(0, \tau) - \sqrt{2} \right| \le \frac{1}{200}\, a^{-2}$ for all $\tau \le \bar{\tau}$.  The uniform continuity
  of both $u (\cdot, \tau)$ and $ u_\sigma(\cdot,\tau)$ on $|\sigma | \leq 1, \tau \leq \bar \tau$ (this follows from the smooth uniform convergence on compact sets
  $\lim_{\tau \to -\infty} u (\cdot, \tau) = \sqrt{2}$) combined with $u_\sigma(0,\tau)=0$ imply that there exists $\delta(\bar \tau, a)$ such that
  \[
  \left| u(\sigma, \tau) - \sqrt{2} \right| \le \frac{1}{100}\, a^{-2} \qquad \mbox{and} \qquad |u_\sigma^2(\sigma,\tau)| \leq \frac 1{32} a^{-4}
  \]
  for all $|\sigma| \leq \delta(\bar \tau, a)$, $\tau \leq \bar \tau$.  It follows that the assumptions of Proposition \ref{prop-Br1}
  are satisfied for $u(\sigma,\tau)$ (recall that $Y(u(\sigma, \tau),\tau) = u_\sigma^2(\sigma,\tau)$).  Therefore, Proposition \ref{prop-Br1}
  yields that for all $0 < \sigma < \delta(\bar \tau, a)$,
  \[
  Y(u,\tau ) \le Y_a(u), \qquad \mbox{for all} \quad r_* a^{-1} \le u \le u(\sigma,\tau), \,\,\, \tau \le \bar{\tau}.
  \]
  Letting $\sigma \to 0$ we conclude the desired result.
\end{proof}

\smallskip

\section{Parabolic region asymptotics}

\label{sec-cylindrical}

Our goal in the next two sections is to establish the asymptotic behavior of any
rotationally symmetric solution $u(\sigma,\tau)$ of \eqref{eq-u} (or
equivalently \eqref{eqn-un}) in the cylindrical region
\[
\cC_\theta := \bigl\{ \, \sigma : | \,\, u(\sigma, \tau) \geq \theta/2 \, \bigr\}
\]
for any $\theta >0$ small.  This will be done in two steps: in this section, we will first
analyze the linearized operator or our equation at the cylinder $u=\sqrt{2}$ to
establish precise asymptotics of $u(\sigma,\tau)$ in the parabolic region
$\cP_L := \big \{ \, \sigma : \,\, |\sigma | \leq L \, \big \}$, holding for any
$L \gg 1$ and then, in the next section, we will use these asymptotics and barrier arguments to
establish the behavior of $u(\sigma,\tau)$ in the intermediate region
$\cI_{L,\theta} := \big \{ \, \sigma :\,\, |\sigma| \geq L, \,\, u(\sigma,\tau)
\geq \ \theta/2 \, \big \}.$ Because of reflection symmetry it is enough to consider only the case where $\sigma \ge 0$.

\sk

We will assume throughout this section that $u(\sigma,\tau) $ is a solution of
\eqref{eqn-un} (in commuting variables).  The cylindrical norm in the parabolic region is defined as
\begin{equation}
  \label{eq-norm-parabolic}
  \sup_{\tau' \le \tau} \|f(\cdot,\tau')\|,
\end{equation}
where $f: \R \times (-\infty,\tau_0] \to \R$ and
\[
\|f(\cdot,s)\|^2 
:= \int_\R f(\sigma,s)^2 \, e^{-\frac{\sigma^2}{4}} \, d\sigma.
\]
We will often denote $d\mu := e^{-\frac{\sigma^2}{4}}\, d\sigma$.  We will prove the following:

\begin{theorem}\label{prop-parabolic-asym} 
  Under the same assumptions as in Theorem \ref{thm-asym}, for any $L \gg 1$, the solution $u(\sigma,\tau)$ of \eqref{eqn-un} satisfies the following asymptotics
  \begin{equation}
    \label{eqn-u-asym-p}
    u(\sigma,\tau) 
    = \sqrt{2} \, \Bigl( 1 - \frac{\sigma^2-2}{8 |\tau|} 
    + o\bigl (\frac 1{|\tau|} \bigr) \Bigr)
    \qquad (\tau\to-\infty)
  \end{equation}
  uniformly for $|\sigma| \leq L$.
\end{theorem}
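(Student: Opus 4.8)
The plan is to carry out a spectral decomposition of the solution $u(\sigma,\tau)$ about the cylinder $u\equiv\sqrt2$, localized to the parabolic region using Brendle's barriers. First I would write $u = \sqrt2 + v$ and substitute into \eqref{eqn-un}, obtaining $v_\tau = \mathcal L v + (\text{higher order})$, where $\mathcal L v = v_{\sigma\sigma} - \tfrac\sigma2 v_\sigma + v$ is the Ornstein--Uhlenbeck-type operator on the weighted space $L^2(d\mu)$; the nonlocal term $-J(\sigma,\tau)u_\sigma$ and the quadratic terms $u_\sigma^2/u$, $-1/u+u/2$ all contribute to the error. The operator $\mathcal L$ is self-adjoint on $L^2(d\mu)$ with discrete spectrum: eigenvalues $1-\tfrac k2$, $k=0,1,2,\dots$, with Hermite-type eigenfunctions $h_0=1$, $h_1=\sigma$, $h_2=\sigma^2-2$, etc. So there are two unstable modes ($k=0,1$), one neutral mode ($k=2$), and the rest stable. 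By Proposition \ref{prop-back-limit} we know $v\to 0$, and the reflection symmetry kills the $k=1$ mode, so the decay is governed by competition between the $k=0$ mode and the neutral $k=2$ mode.

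The key steps, in order: (1) Use Proposition \ref{prop-Br2} to get $Y = u_\sigma^2 \le C a^{-2}$ up to $\sigma=0$, which together with the convergence to the cylinder lets me introduce a cutoff $\chi$ supported in $\{|\sigma|\le \rho\sqrt{|\tau|}\}$ (or a region where $u$ is close to $\sqrt2$) and control the nonlocal term $J$ and all error terms by $o(1)\|v\|$ in the weighted norm, making the truncated equation effectively $\hat v_\tau = \mathcal L\hat v + \text{(small)}$. (2) Run the Merle--Zaag-type ODE alternative on the projections $\hat v = v_+ + v_0 + v_-$ onto the unstable/neutral/stable subspaces: show that either the neutral mode dominates or the unstable mode dominates; a standard argument (as in \cite{ADS1}) using that the solution is ancient and bounded rules out the unstable-dominated case, so $v_0$ dominates and $\|v_\pm\| = o(\|v_0\|)$. (3) Since $v_0(\tau) = \alpha(\tau)(\sigma^2-2)$ with $\alpha$ in the kernel of $\mathcal L$, derive the scalar ODE for $\alpha(\tau)$ by projecting the equation onto $h_2$; the quadratic nonlinearity feeds back a term $\sim -c\,\alpha^2$ with a definite sign, giving $\alpha' = -c\alpha^2 + o(\alpha^2)$, hence $\alpha(\tau) = \tfrac1{c|\tau|}(1+o(1))$. (4) Compute the constant $c$ explicitly by evaluating the relevant Gaussian integral $\int (\sigma^2-2)^2 d\mu$ against the leading nonlinear contribution; matching forces the coefficient to be exactly $-\tfrac{\sqrt2}{8|\tau|}$ in front of $(\sigma^2-2)$, i.e. $\alpha(\tau) = -\tfrac{1}{4|\tau|}(1+o(1))$ so that $v = -\tfrac{\sqrt2}{8|\tau|}(\sigma^2-2) + o(|\tau|^{-1})$ uniformly on $|\sigma|\le L$. (5) Upgrade the $L^2(d\mu)$ estimate to the pointwise/uniform statement on compact $\sigma$-intervals via interior parabolic regularity (Schauder estimates), since $L$ is fixed.

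The main obstacle is Step (1): because equation \eqref{eqn-un} is nonlocal through $J(\sigma,\tau)=2\int_0^\sigma (u_{\sigma\sigma}/u)\,d\sigma'$, cutting off in the parabolic region does not obviously leave a small, local error — one must show that $J$ stays small in the relevant weighted norm on the support of the cutoff, and that the commutator errors generated by multiplying the equation by $\chi$ are genuinely $o(1)$ relative to $\|v\|$. This is where Brendle's barrier estimate (Propositions \ref{prop-Br1}--\ref{prop-Br2}) is essential, as it gives the quantitative smallness of $u_\sigma$ (and, via the equation, of $u_{\sigma\sigma}/u$) out to scales growing like $\sqrt{|\tau|}$, which is exactly what is needed to absorb $J u_\sigma$. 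The secondary difficulty is the sign and exact value of the constant $c$ in Step (3)--(4): one must track carefully which terms in the expansion of $u_\sigma^2/u - 1/u + u/2$ survive the projection onto $h_2$, and here the boundary contributions from the cutoff must be shown not to affect the leading coefficient. Both difficulties are handled essentially as in \cite{ADS1}, with the barrier input replacing the convexity estimates available in the MCF setting.
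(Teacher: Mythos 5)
Your proposal follows essentially the same route as the paper: perturb off the cylinder, localize with a cutoff whose errors are controlled by Brendle's barrier estimates (Propositions \ref{prop-Br1}--\ref{prop-Br2}) so that the nonlocal term $J$ and the truncation terms are negligible in $L^2(d\mu)$, run the Merle--Zaag alternative on the unstable/neutral/stable projections, rule out the unstable-dominated case by the reparametrization argument of \cite{ADS1}, and integrate the resulting ODE $\alpha'=-c\,\alpha^2+o(\alpha_*^2)$ for the $h_2$-coefficient, which is precisely the paper's argument (there with the normalization $u=\sqrt2\,(1+v)$, cutoff scale $\delta(\tau)^{-\theta}$, and $\alpha(\tau)=-\tfrac{1}{8|\tau|}(1+o(1))$). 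The only blemish is a small internal inconsistency in your step (4): with your normalization $u=\sqrt2+v$ the neutral coefficient should be $-\tfrac{\sqrt2}{8|\tau|}(1+o(1))$ rather than $-\tfrac{1}{4|\tau|}(1+o(1))$, though the final expansion you state is the correct one.
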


To prove Theorem \ref{prop-parabolic-asym} we will use crucial ideas from a
recent paper by Brendle \cite{Br} and combine them with our methods in \cite{ADS1}.  In
what follows below we will outline those crucial ideas from \cite{Br}, and for
the proofs of those we refer the reader to the same paper.

\sk

To prove the asymptotics
estimates in \cite{ADS1} in the compact case and in \cite{BC} and \cite{BC1} in the
non-compact case, the spectral decomposition of the linearized operator at the cylinder in terms of
Hermite polynomials has been used.  The localization argument used in both papers, \cite{ADS1} and \cite{Br}, to make the spectral decomposition possible uses a calibration argument which has no obvious analogue in the Ricci flow.  The main obstruction comes from the non-local character of \eqref{eqn-un},
as the construction of barriers plays an important role.
In \cite{Br}, Brendle manages

to use barrier arguments.  For the barrier construction he uses steady gradient Ricci
solitons with singularity at the tip, which were found by Bryant in \cite{Bry}.
The same technique plays an important role in our case.

In order to set up the barrier argument we will change the variables and
consider our evolving metrics written in the form
$\tilde{g} = Y(u,\tau)^{-1} (du)^2 + u^2 g_{S^2}$, where
$Y(u,\tau) = u_{\sigma}^2$.  Note also that $u_{\sigma} = \psi_s$, so will
sometimes consider $Y$ as a function of $\psi$.  Since $u_{\sigma} = 0$ at
$\sigma = 0$, this change of variables is good from $u = 0$ all the way up to
$u = u(0,\tau)$, but not including $u = u(0,\tau)$.

\subsection{Analysis near the cylinder soliton}
Let $v(\sigma,\tau)$ be such that
\begin{equation}
  \label{eq-v-defined}
  u(\sigma,\tau) = \sqrt{2} \, \bigl(1 + v(\sigma,\tau)\bigr).
\end{equation}
The function $v$ satisfies
\begin{equation}
  \label{eq-v-evolution}
  v_\tau = v_{\sigma\sigma}
  -\Bigl(\frac{\sigma}{2}+J[v]\Bigr) v_\sigma
  +v
  +\frac{v_\sigma^2}{1+v}
  - \frac{v^2}{2(1+v)},
\end{equation}
where the function $J[v]$ is defined by
\begin{equation}
  \label{eq-J}
  J[v](x, t) = 2\int_0^x \frac{v_{\sigma\sigma}}{1+v}\, d\sigma
  =2\int_0^x \frac{dv_\sigma}{1+v}.
\end{equation}
We can split the terms in \eqref{eq-v-evolution} into linear and higher order terms as follows
\begin{equation}
  \label{eq-v-evol-lin-plus-nonlin}
  v_\tau = \cL [v] + \cN [v]
\end{equation}
where
\begin{equation}\label{eqn-oper}
  \cL [v] := v_{\sigma\sigma} -\frac{\sigma}{2} v_\sigma + v
\end{equation}
and
\[
\cN [v] := -J[v]\, v_\sigma +\frac{v_\sigma^2}{1+v} - \frac{v^2}{2(1+v)}.
\]

\sk

The function $v(\sigma,\tau)$ is not defined for all $\sigma\in\R$ and therefore does not belong to the Hilbert space
$ \cH= L^2(\R , e^{-\sigma^2/4}\, d\sigma)$ at any time $\tau$.  Since this is the natural Hilbert space to consider, we truncate $v$ outside an interval $|\sigma|\geq \sigma_*(\tau)$ for a suitably chosen $\sigma_*(\tau)$, which we will allow to depend on the size of $v(\sigma,\tau)$ at some fixed finite value of $\sigma$.
Thus we follow Brendle \cite{Br} and define for each $\tau$
\begin{align}
  \label{eq-delta-tau}
  \delta(\tau)
  &:= \sup_{\tau'\leq\tau} \Big (
  \big|u(0, \tau')-\sqrt{2}\big| + |u_\sigma(0,\tau')|
  \Big ) = \sqrt{2}\, \sup_{\tau'\le\tau}
  |v(0, \tau')|
\end{align}
since $u_{\sigma}(0,\tau) = 0$ for all $\tau$.
\sk

By Lemma 3.8 in \cite{Br} we have that there exists a uniform constant $C$ so that for $\tau \le \tau_0 \ll -1$ we have
\sk
\begin{equation}
  \label{simon-error}
  \|\chi_{[-\delta^{-\theta}, \delta^{-\theta}]} (v_{\tau} - \mathcal{L} v)\|^2 \le C\, \delta^{\theta}\, \|\chi_{[-\frac12{\delta^{-\theta}}, \frac12{\delta^{-\theta}}]} v\|^2 + C\, e^{-\frac18\, \delta^{-2\theta}},
\end{equation}
\sk
where $\chi_{[-\delta^{-\theta}, \delta^{-\theta}]}$ and $\chi_{[-\frac12{\delta^{-\theta}}, \frac12{\delta^{-\theta}}]}$ are the characteristic functions of the sets $[-\delta^{-\theta}, \delta^{-\theta}]$ and $[-\frac12{\delta^{-\theta}}, \frac12{\delta^{-\theta}}]$, respectively, and $\theta = \frac{1}{100}$.  Note that we have suppressed the dependence of $\delta$ on $\tau$ and we simply wrote $\delta$ for $\delta(\tau)$.  We will do that below as well when there is no ambiguity.

Choose a cutoff function $\hat\chi\in C^\infty(\R)$ satisfying $\hat\chi = 1$ on $[-\frac12,\frac12]$ and $\hat\chi = 0$ outside of $[-1,1]$, and set
\begin{equation}
  \label{eq-chi-def}
  \chi(\sigma,\tau) = \hat\chi(\delta(\tau)^\theta\sigma),\qquad
  \theta=\frac{1}{100}.
\end{equation}
We can now introduce our truncated version of $v$:
\begin{equation} \label{eq-vbar-defined}
  \bar{v}(\sigma,\tau) := v(\sigma,\tau) \chi(\sigma, \tau).
\end{equation}
It easily follows that
\begin{equation}
  \label{eq-bar-v-another}
  \bar{v}_{\tau} - \mathcal{L} \bar{v} = \chi (v_{\tau} - \mathcal{L} v) + (\chi_{\tau} - \chi_{\sigma\sigma} + \frac{\sigma}{2} \chi_{\sigma})\, v - 2\chi_{\sigma} v_{\sigma} =: E_0(\sigma,\tau).
\end{equation}

We claim that there exists a uniform constant $C$ so that for all $\tau \le \tau_0 \ll -1$ we have
\begin{equation}
  \label{eq-E0-simon}
  \|E_0(\cdot,\tau)\|^2 \le C\, \delta^{\theta} \|\bar{v}\|^2 + C\, e^{-\frac18\, \delta^{-2\theta}}.
\end{equation}
Indeed, by \eqref{simon-error} we immediately get that
\[
\|\chi (v_{\tau} - \mathcal{L} v)\|^2 \le C\, \delta^{\theta}\, \|\bar{v}\|^2 + C \, e^{-\frac18\, \delta^{-2\theta}}.
\]
By the definition of $\chi$ in \eqref{eq-chi-def} we have
\[
\|(\chi_{\tau} - \chi_{\sigma\sigma} + \frac{\sigma}{2}\, \chi_{\sigma})\, v - 2\chi_{\sigma} v_{\sigma}\|^2 \le C\, e^{-\frac18\, \delta^{-2\theta}}.
\]
Combining those two estimates immediately yields \eqref{eq-E0-simon}.

\sk

%%The truncated function $\bar{v}(\cdot,\tau)$ does belong to $\cH$ at all times $\tau$, so we can decompose it according to
%%\[
%%\bar v(\tau) = \bar v_0(\tau) + \bar v_+(\tau) + \bar v_-(\tau) ,
%%\]
%%where $\bar v_0\in\cH_0$, $\bar v_+\in\cH_+$, and $\bar v_-(\tau)\in\cH_-$.  Since $\cH_0$ is spanned by $h_2(\sigma)=\sigma^2-2$, we can write
%%\[
%%\bar v_0(\sigma,\tau) = \alpha(\tau)h_2(\sigma).
%%\]

\begin{lemma}\label{prop-delta-Lipschitz}
  The function $\delta(\tau)$ is non-decreasing with $\lim_{\tau\to -\infty} \delta(\tau) = 0$.  Moreover, $\delta(\tau)$ is Lipschitz continuous; in particular, $\delta(\tau)$ is absolutely continuous and its derivative $\delta'(\tau)$ is uniformly bounded and non negative a.e.
\end{lemma}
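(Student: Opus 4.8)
The monotonicity of $\delta(\tau)$ is immediate from the definition \eqref{eq-delta-tau} as a supremum over $\tau' \le \tau$ of a fixed nonnegative quantity: enlarging the range of the supremum can only increase it. The limit $\lim_{\tau\to-\infty}\delta(\tau)=0$ follows from Proposition~\ref{prop-back-limit}: the rescaled metrics converge in Cheeger--Gromov sense to the round cylinder of radius $\sqrt2$ around the point $q$ (which is $\sigma=0$), so $u(0,\tau)\to\sqrt2$, and together with $u_\sigma(0,\tau)=0$ (reflection symmetry) this forces the quantity inside the supremum to tend to $0$; since $\delta$ is monotone and bounded below by $0$, the limit of the supremum is $0$ as well. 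These are the soft parts.

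The substantive claim is the Lipschitz bound, and by monotonicity it suffices to bound $\delta'(\tau)$ from above a.e. For this I would use the characterization $\delta(\tau)=\sqrt2\,\sup_{\tau'\le\tau}|v(0,\tau')|$. First I would establish a pointwise differential inequality of the form $\bigl|\frac{d}{d\tau}v(0,\tau)\bigr|\le C$ for $\tau\le\tau_0\ll-1$, which then transfers to the running supremum: if $g(\tau):=\sup_{\tau'\le\tau}|v(0,\tau')|$ and $|v(0,\cdot)'|\le C$, then $g$ is Lipschitz with constant $C$ (at a point where $g'$ exists and $g$ is strictly increasing, the sup is attained at $\tau$ itself up to passing to a sequence, giving $g'(\tau)\le|v(0,\tau)'|\le C$; where $g$ is locally constant $g'=0$). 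To bound $\bigl|\frac{d}{d\tau}v(0,\tau)\bigr|$, evaluate the evolution equation \eqref{eq-v-evolution} at $\sigma=0$: using $v_\sigma(0,\tau)=0$ and $J[v](0,\tau)=0$, one gets
\[
\frac{d}{d\tau}v(0,\tau) = v_{\sigma\sigma}(0,\tau) + v(0,\tau) - \frac{v(0,\tau)^2}{2\,(1+v(0,\tau))}.
\]
Since $v(0,\tau)\to 0$, the last two terms are bounded; it remains to bound $v_{\sigma\sigma}(0,\tau)$, equivalently $u_{\sigma\sigma}(0,\tau)$. But $u_{\sigma\sigma}(0,\tau)$ is, up to the explicit factor in \eqref{eqn-defnu}, the rescaled quantity $-\psi\,\psi_{ss}=\psi^2 K_0$ at the point $q$, and the curvature bound from Lemma~\ref{lemma-point-q} (namely $R(q,t)\le C/|t|$, hence $K_0(q,t)\le C/|t|$) combined with $\psi(q,t)^2\le C|t|$ (from $\psi_{\max}^2 \le C|t|$, which also follows from the maximum principle argument in the proof of Lemma~\ref{lemma-point-q}) gives a uniform bound $|u_{\sigma\sigma}(0,\tau)|\le C$ for $\tau\le\tau_0$. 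Hence $\bigl|\frac{d}{d\tau}v(0,\tau)\bigr|\le C$, and $\delta$ is Lipschitz on $(-\infty,\tau_0]$.

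The remaining bookkeeping — that a Lipschitz function is absolutely continuous, that its a.e.-derivative is bounded, and that $\delta'\ge0$ a.e.\ by monotonicity — is standard real analysis and needs no further comment. The main obstacle in this argument is the control of $v_{\sigma\sigma}(0,\tau)=\sqrt2^{-1}u_{\sigma\sigma}(0,\tau)$: one has to invoke both the scalar curvature decay at the tip-point $q$ from Lemma~\ref{lemma-point-q} and the growth rate $\psi_{\max}^2 \le C|t|$ to see that the rescaling exactly cancels, leaving a $\tau$-independent bound. An alternative route, which avoids even this, is to note that $(S^3,\tilde g(\cdot,\tau))$ converges smoothly (locally, by Shi-type estimates for $\kappa$-solutions) to the cylinder, so all derivatives of $u$ at $\sigma=0$ are uniformly bounded for $\tau\le\tau_0$; then the evolution equation directly bounds $\partial_\tau v(0,\tau)$. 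I would present the curvature-bound version as the primary argument since it keeps the dependencies explicit, and remark on the smooth-convergence shortcut.
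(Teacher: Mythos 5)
Your proposal is correct, but your primary argument for the Lipschitz bound takes a different route from the paper. The paper's proof is essentially your ``shortcut'': since $u(\cdot,\tau)\to\sqrt2$ uniformly on a fixed neighborhood $|\sigma|\le 2\sigma_0$ of the origin, parabolic regularity gives uniform bounds on all derivatives of $u$ for $|\sigma|\le\sigma_0$ and $\tau\ll0$, so $u(0,\tau)$ has uniformly bounded $\tau$-derivative and the running supremum $\delta(\tau)$ is Lipschitz; absolute continuity and a.e.\ differentiability are then quoted from Rademacher, and $\delta'\ge0$ a.e.\ from monotonicity. Your main argument instead evaluates the evolution equation \eqref{eq-v-evolution} at $\sigma=0$, uses $v_\sigma(0,\tau)=J[v](0,\tau)=0$, and reduces everything to a bound on $v_{\sigma\sigma}(0,\tau)$, which you obtain from the curvature decay $R(q,t)\le C/|t|$ of Lemma \ref{lemma-point-q} together with $\psi(q,t)^2\le C|t|$. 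This is a valid and more explicit alternative: it isolates exactly which geometric input controls the time derivative at the center, whereas the paper's argument is softer but shorter. Two small corrections to your write-up: the maximum principle computation in the proof of Lemma \ref{lemma-point-q} yields the \emph{lower} bound $\psi_{\max}^2(t)\ge|t|$, not the upper bound you attribute to it; the upper bound $\psi(q,t)^2\le C|t|$ should instead be drawn from $u(0,\tau)\to\sqrt2$ (i.e.\ Proposition \ref{prop-back-limit}), which you have already invoked, or from the concavity argument used in the proof of Lemma \ref{lemma-bound-Q}. Also note that the bound on $u_{\sigma\sigma}(0,\tau)$ is available even more directly from Lemma \ref{lemma-bound-Q}, since $0\le -u_{\sigma\sigma}\le 1/u$ and $u(0,\tau)\to\sqrt2$; the paper itself uses exactly this elsewhere to bound $\bar v_{\sigma\sigma}$.
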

\begin{proof}
  The definition of $\delta(\tau)$ directly implies that $\delta(\tau)$ is non-decreasing.
  As $\tau\to-\infty$ we have $u(\sigma,\tau)\to\sqrt{2}$ uniformly for $|\sigma|\leq 2\sigma_0$.
  By parabolic regularity it follows that all derivatives of $u$ are uniformly bounded for $|\sigma|\leq \sigma_0$ and for $\tau\ll 0$.
  Hence $u(0, \tau)$ is a smooth function of $\tau$ with uniformly bounded time derivatives.
  This implies that $\delta(\tau)$ is a Lipschitz function of $\tau$.
  Rademacher's Theorem implies that $\delta(\tau)$ is absolutely continuous and, in particular, differentiable almost everywhere.
\end{proof}

We have the following crucial lemma which will allow us to control error terms coming from cutoff functions.
\begin{lemma}
  \label{lemma-simon-crucial} There exist uniform constants $\tau_0 \ll -1$ and $C > 0$ so that
  \[
  |v_{\sigma}(\sigma,\tau)| + |v(\sigma,\tau)| \le C \, \delta(\tau)^{\frac18},
  \]
  for $|\sigma| \le \delta(\tau)^{-\theta}$.  Moreover,
  \[
  \delta(\tau)^4
  = \Big ( \sup_{\tau' \le\tau} |v(0,\tau')| \Big)^4
  \le C \sup_{\tau'\leq \tau} \|\bar v(\tau')\|^2.
  \]
\end{lemma}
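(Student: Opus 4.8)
The plan is to prove the two assertions separately, using the a priori smallness $\delta(\tau)\to 0$ and interior parabolic estimates for the equation \eqref{eq-v-evolution} together with the $L^2$-coercivity packaged in \eqref{eq-E0-simon}.

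\medskip

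\textit{Pointwise bound on $v,v_\sigma$.} First I would establish that $v$ and $v_\sigma$ are small, of size $\delta^{1/8}$, on the whole truncation interval $|\sigma|\le\delta^{-\theta}$. The starting point is Proposition \ref{prop-Br2} (a consequence of Brendle's barriers): once $|u(0,\tau)-\sqrt 2|$ is controlled by $\delta(\tau)$, the barrier $Y_a$ with $a\sim\delta(\tau)^{-1/2}$ dominates $Y=u_\sigma^2$ on the range $\tfrac12\le u\le u(0,\tau)$, giving $u_\sigma^2 = Y(u,\tau)\le C a^{-2}\le C\delta(\tau)$ there, hence $|v_\sigma|\le C\sqrt{\delta}$ on the cylindrical part. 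To convert this into a bound on $v$ itself and to reach out to $|\sigma|\le\delta^{-\theta}$ (where $u$ may have dropped below $\sqrt 2$ but is still bounded below), one integrates $v_\sigma$ in $\sigma$ starting from $\sigma=0$, where $|v(0,\tau)|\le\delta(\tau)/\sqrt2$ by definition of $\delta$; this yields $|v(\sigma,\tau)|\le \delta(\tau)/\sqrt2 + C\sqrt\delta\,\delta^{-\theta}$, which is $o(1)$ since $\theta=\tfrac1{100}$ is tiny, but not yet of the claimed order $\delta^{1/8}$. To upgrade the exponent I would feed this crude smallness back into the equation: on the region where $|v|,|v_\sigma|$ are already $o(1)$, equation \eqref{eq-v-evolution} is a uniformly parabolic equation with small coefficients, and the nonlocal term $J[v]$ is controlled because $|J[v]|\le 2\int_0^\sigma |v_{\sigma\sigma}|/(1+v)$ is small on the truncation interval by parabolic interior estimates. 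Standard interior Schauder/$L^p$ estimates for \eqref{eq-v-evolution}, applied on unit parabolic cylinders inside $\{|\sigma|\le\delta^{-\theta}\}$ and bootstrapped, give $\|v\|_{C^{2,1}}\le C\sqrt\delta$ there, which in particular yields the stated bound $|v_\sigma|+|v|\le C\delta^{1/8}$ (with plenty of room in the exponent — the $\tfrac18$ is not sharp and is chosen for convenience in later estimates).

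\medskip

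\textit{The coercivity estimate $\delta^4\le C\sup_{\tau'\le\tau}\|\bar v(\tau')\|^2$.} Here the point is to relate the sup-in-time, pointwise-at-$\sigma=0$ quantity $\delta(\tau)^4=\bigl(\sup_{\tau'\le\tau}|v(0,\tau')|\bigr)^4$ to the weighted $L^2$ norm of the truncation $\bar v$. Fix $\tau'\le\tau$ achieving (nearly) the supremum defining $\delta(\tau)$, so $|v(0,\tau')|\ge\tfrac12\delta(\tau)\cdot 2^{-1/2}$, say. By the first part of the lemma, on $|\sigma|\le 1$ (which lies inside $|\sigma|\le\delta^{-\theta}$ once $\tau'$ is negative enough) we have $|v_\sigma(\cdot,\tau')|\le C\delta(\tau')^{1/8}\le C\delta(\tau)^{1/8}$, hence $|v(\sigma,\tau')|\ge |v(0,\tau')| - C\delta(\tau)^{1/8}$ for $|\sigma|\le 1$. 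Now this is \emph{not} directly useful because $\delta^{1/8}$ dominates $\delta$; the standard fix is to interpolate — one uses the pointwise gradient bound only on a shrinking interval $|\sigma|\le\rho$ with $\rho$ chosen so that $C\rho\,\delta^{1/8}\le\tfrac12|v(0,\tau')|$, i.e. $\rho\sim\delta^{7/8}$. On that interval $|v(\sigma,\tau')|\ge\tfrac12|v(0,\tau')|\ge c\,\delta(\tau)$, and since $\chi\equiv 1$ there (as $\delta^\theta\rho\to0$) we get $\bar v=v$, so
\[
\|\bar v(\tau')\|^2 \ge \int_{|\sigma|\le\rho} v(\sigma,\tau')^2\,e^{-\sigma^2/4}\,d\sigma \ge c\,\delta(\tau)^2\cdot\rho \ge c\,\delta(\tau)^2\cdot\delta(\tau)^{7/8},
\]
which gives $\sup_{\tau'\le\tau}\|\bar v(\tau')\|^2\ge c\,\delta(\tau)^{23/8}$. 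Since $23/8<4$ and $\delta\to 0$, this is \emph{weaker} than claimed, so the naive interpolation is lossy and one must be more careful — see below.

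\medskip

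\textit{Where the real work is.} The main obstacle is getting the sharp power $4$ rather than the lossy power one obtains by crude interpolation. The correct argument (following Brendle \cite{Br}, Lemma 3.8 and its consequences, and the analogous estimate in \cite{ADS1}) is to exploit that the barrier from Proposition \ref{prop-Br2} gives a \emph{quadratic} improvement: $Y=u_\sigma^2\le Y_a(u)$ with $a\sim\delta^{-1/2}$ and the expansion \eqref{eqn-asym-Ya} shows $Y_a(u)=a^{-2}(2u^{-2}-1)+O(a^{-4})$, so near $u=\sqrt2$ one has $u_\sigma^2\le C\delta\cdot|v|+O(\delta^2)$ rather than merely $u_\sigma^2\le C\delta$. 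This Hamilton-type differential inequality $|v_\sigma|^2\lesssim \delta|v|$ is what converts $|v(0,\tau')|\sim\delta$ into a genuinely quadratic-in-$\delta$ lower bound for $\int v^2$: integrating $|v_\sigma|\le\sqrt{C\delta|v|}$ shows $|v(\sigma,\tau')|$ cannot drop from $\sim\delta$ to $0$ faster than on an interval of length $\sim\sqrt{\delta}/\sqrt{\delta}=O(1)$ — more precisely $\sqrt{|v|}$ is Lipschitz with constant $\sqrt{C\delta}$, so $|v(\sigma,\tau')|\ge\tfrac12|v(0,\tau')|$ on $|\sigma|\le c$ for a fixed $c>0$ independent of $\delta$. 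Then
\[
\|\bar v(\tau')\|^2\ge \int_{|\sigma|\le c} v^2\,e^{-\sigma^2/4}\,d\sigma\ge c'\,|v(0,\tau')|^2\ge c'\,c''\,\delta(\tau)^2,
\]
and taking $\sup$ over $\tau'\le\tau$ gives $\sup_{\tau'\le\tau}\|\bar v(\tau')\|^2\ge c\,\delta(\tau)^2$, hence certainly $\ge c\,\delta(\tau)^4$ since $\delta\le C$. So the delicate point is precisely to extract from Brendle's barrier the quadratic bound $u_\sigma^2\lesssim\delta\,|v|$ on the parabolic region — once that is in hand, the rest is the elementary integration just described, and the stated (deliberately non-sharp) power $4$ follows with room to spare.
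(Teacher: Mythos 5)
Your overall strategy (Brendle's barrier via Proposition \ref{prop-Br2} to bound $u_\sigma^2$, then integration from $\sigma=0$) is indeed the route behind the first estimate — the paper itself simply cites Lemma 3.7 of \cite{Br} for it, and "standard interpolation" for the second. But your proposal contains a recurring exponent-direction error that leads you to reject steps that already suffice. In the first part, the crude bound you obtain, $|v|\le \delta/\sqrt2 + C\sqrt{\delta}\,\delta^{-\theta}=O(\delta^{49/100})$, is \emph{stronger} than the claimed $C\delta^{1/8}$: since $\delta\le 1$, a larger exponent means a smaller quantity, so $\delta^{49/100}\le \delta^{1/8}$ and the Schauder/bootstrap upgrade you propose is unnecessary (and would itself need care with the nonlocal term $J[v]$). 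The same confusion recurs, with real consequences, in the second part: your "lossy" computation $\sup_{\tau'\le\tau}\|\bar v(\tau')\|^2\ge c\,\delta(\tau)^{23/8}$ is not weaker than claimed — because $23/8<4$ and $\delta\le 1$ it gives $\delta^4\le\delta^{23/8}\le C\sup_{\tau'\le\tau}\|\bar v(\tau')\|^2$, which is precisely the lemma (the power $4$ is deliberately non-sharp). You discard a correct proof and replace it with one that does not close.

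The replacement argument is the genuine gap. The claimed quadratic refinement $u_\sigma^2\le C\delta|v|+O(\delta^2)$ from \eqref{eqn-asym-Ya} is delicate (sign of $v$ near $u=\sqrt2$, validity range $u\in[\sqrt2-\eta,\sqrt2+\eta]$), but even granting it, the additive $O(\delta^2)$ term is fatal: the hypothesis of Proposition \ref{prop-Br2} forces the barrier parameter to satisfy $a^{-2}\gtrsim\delta(\tau)$, while $|v(0,\tau')|\le\delta(\tau)/\sqrt2$, so the two terms $\delta|v|$ and $\delta^2$ are of the same size. The ODE comparison then only yields $|v(\sigma,\tau')|+\delta\ge\tfrac14\bigl(|v(0,\tau')|+\delta\bigr)$ on a fixed interval, from which $|v|\ge\tfrac12|v(0,\tau')|$ — and hence your final bound $\|\bar v(\tau')\|^2\ge c\,\delta^2$ — does not follow. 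The clean argument intended by the paper is the standard interpolation: near $\sigma=0$ the smooth convergence $u\to\sqrt2$ gives a uniform bound $|v_\sigma|\le C$, so if $|v(0,\tau')|=m$ then $|v|\ge m/2$ on an interval of length $\ge\min(1,m/2C)$, whence $m^3\le C\|\bar v(\tau')\|^2$ (note $\chi\equiv1$ and the Gaussian weight is comparable to $1$ there); taking the supremum over $\tau'\le\tau$ and using $\delta\le C$ gives $\delta^4\le C\delta^3\le C\sup_{\tau'\le\tau}\|\bar v(\tau')\|^2$.
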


\begin{proof}
  The first estimate follows by the same proof as of Lemma 3.7 in \cite{Br}.  The second estimate is also shown in \cite{Br} and follows by standard interpolation inequalities.
\end{proof}

We split the proof of Theorem \ref{prop-parabolic-asym} in a few propositions.  First, we derive a more detailed equation for $\bar v$.

\begin{prop}
  The function $\bar v$ satisfies the linear inhomogeneous equation
  \begin{equation}
    \label{eq-vbar-linear-inhomog}
    \bar v_\tau - \cL\bar v
    = a(\sigma,\tau)\bar v_\sigma +
    b(\sigma,\tau)\bar v + c(\sigma,\tau),
  \end{equation}
  where
  \[
  a(\sigma,\tau) = -J[v] +\frac{v_\sigma}{1+v},\qquad b(\sigma,\tau) = - \frac{v}{2(1+v)},
  \]
  and
  \begin{equation}
    \label{eq-c-defined}
    c(\sigma,\tau) =
    \left\{
    - \frac{v v_\sigma}{1+v}
    +J[v]v
    -2 v_\sigma
    \right\} \chi_\sigma
    +\bigl(\chi_\tau - \chi_{\sigma\sigma} + \frac\sigma2 \chi_\sigma\bigr) v \,\,.
  \end{equation}
\end{prop}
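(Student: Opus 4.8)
The plan is to start from the identity (\ref{eq-bar-v-another}), already established above, which reads
$\bar v_\tau - \cL\bar v = \chi(v_\tau - \cL v) + (\chi_\tau - \chi_{\sigma\sigma} + \tfrac\sigma2\chi_\sigma)v - 2\chi_\sigma v_\sigma$,
and to rewrite the first summand using the linear/nonlinear splitting (\ref{eq-v-evol-lin-plus-nonlin}), namely $v_\tau - \cL v = \cN[v] = -J[v]v_\sigma + \tfrac{v_\sigma^2}{1+v} - \tfrac{v^2}{2(1+v)}$. The whole computation then amounts to converting each occurrence of $v_\sigma$ inside $\chi\,\cN[v]$ into $\bar v_\sigma$ via the Leibniz rule $\bar v_\sigma = \chi v_\sigma + \chi_\sigma v$, that is, $\chi v_\sigma = \bar v_\sigma - \chi_\sigma v$.

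First I would treat the three terms of $\chi\,\cN[v]$ separately. Writing $\chi v_\sigma = \bar v_\sigma - \chi_\sigma v$ gives $-J[v]\,\chi v_\sigma = -J[v]\,\bar v_\sigma + J[v]v\,\chi_\sigma$ and $\tfrac{v_\sigma}{1+v}\,\chi v_\sigma = \tfrac{v_\sigma}{1+v}\,\bar v_\sigma - \tfrac{v v_\sigma}{1+v}\,\chi_\sigma$, while the last term needs no manipulation since $\chi\cdot\bigl(-\tfrac{v^2}{2(1+v)}\bigr) = -\tfrac{v}{2(1+v)}\,\bar v$. Summing these three contributions produces exactly $a(\sigma,\tau)\bar v_\sigma + b(\sigma,\tau)\bar v$ with $a = -J[v] + \tfrac{v_\sigma}{1+v}$ and $b = -\tfrac{v}{2(1+v)}$, together with a leftover term $\bigl(J[v]v - \tfrac{v v_\sigma}{1+v}\bigr)\chi_\sigma$ carrying a factor of $\chi_\sigma$.

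Finally I would add back the two commutator contributions $(\chi_\tau - \chi_{\sigma\sigma} + \tfrac\sigma2\chi_\sigma)v$ and $-2\chi_\sigma v_\sigma$ from (\ref{eq-bar-v-another}) and collect all the terms involving $\chi_\sigma$, $\chi_\tau$ or $\chi_{\sigma\sigma}$: this bundle is precisely $c(\sigma,\tau)$ as defined in (\ref{eq-c-defined}), and the identity (\ref{eq-vbar-linear-inhomog}) follows. There is no real obstacle here — the argument is a single application of the product rule once (\ref{eq-bar-v-another}) and the splitting (\ref{eq-v-evol-lin-plus-nonlin}) are available; the only point requiring care is the sign bookkeeping of the various $\chi$-derivative terms so that $c$ emerges in exactly the stated form.
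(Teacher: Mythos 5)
Your computation is correct and is essentially identical to the paper's own proof: both start from \eqref{eq-bar-v-another}, insert the splitting \eqref{eq-v-evol-lin-plus-nonlin}, and use $\chi v_\sigma=\bar v_\sigma-\chi_\sigma v$ (together with $\chi v=\bar v$) to produce $a\bar v_\sigma+b\bar v$ plus the $\chi$-derivative remainder $c$. Nothing is missing; the sign bookkeeping in your leftover terms matches \eqref{eq-c-defined} exactly.
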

\begin{proof}
  In \eqref{eq-bar-v-another} we have an evolution equation for \(\bar v\).
  Using \eqref{eq-v-evol-lin-plus-nonlin} and also $\chi v_\sigma=\bar v_\sigma - \chi_\sigma v$, we then get
  \begin{align*}
    \bar v_\tau - \cL\bar v
    &= \chi\cdot (v_\tau-\cL v) + \bigl(\chi_\tau - \chi_{\sigma\sigma}+\frac\sigma2 \chi_\sigma\bigr) v
    -2\chi_\sigma v_\sigma\\
    &= \frac{\chi v_\sigma^2}{1+v}
    -\frac{\chi v^2}{2(1+v)} -J[v] \, \chi v_\sigma
    +\bigl(\chi_\tau - \chi_{\sigma\sigma} + \frac\sigma2 \chi_\sigma\bigr) v
    -2\chi_\sigma v_\sigma\\
    &= \left(-J[v] +\frac{v_\sigma}{1+v}\right) \bar v_\sigma
    -\frac{v}{2(1+v)} \bar v\\
    &\qquad
    -\frac{\chi_\sigma v v_\sigma}{1+v}
    +J[v] \, \chi_\sigma v
    +\bigl(\chi_\tau - \chi_{\sigma\sigma} + \frac\sigma2 \chi_\sigma\bigr) v
    -2\chi_\sigma v_\sigma.
  \end{align*}
  
\end{proof}

We can apply the variation of constants formula to \eqref{eq-vbar-linear-inhomog}, which tells us that for any given $\tau\ll 0$ one has
\begin{equation}
  \label{eq-VOC}
  \bar v (\tau) = e^{\cL} \bar v(\tau-1)
  + \int_{\tau-1}^\tau e^{(\tau-\tau')\cL}
  \left\{
  a \, \bar v_\sigma(\sigma, \tau') + b \, \bar v(\sigma, \tau')
  + c(\sigma, \tau')
  \right\}
  d\tau'.
\end{equation}
Here $e^{t\cL}$ is the heat semigroup on the Hilbert space $\cH = L^2(\R, d\mu)$ associated with the operator $\cL$.  It has the usual smoothing property \eqref{eq-semigroup-smoothing}, i.e.  ~ it satisfies $\|(1-\cL)^re^{t\cL}\|\leq Ct^{-r}e^t$ for all $t>0$.

\sk

To use \eqref{eq-VOC} we first estimate the coefficients $a$, $b$, and $c$.

\begin{prop}
  Given any $\epsilon>0$ there is a $\tau_0\ll 0$ such that for all $\tau\leq \tau_0$ and $|\sigma| \leq \delta(\tau)^{-\theta}$ one has
  \begin{align*}
    |a(\sigma, \tau)| + |b(\sigma,\tau)| \leq C \, \epsilon\,.
  \end{align*}
  One also has
  \[
  \|c(\cdot, \tau)\| \leq C \, e^{-\frac{1}{64}\delta(\tau)^{-2\theta}}.
  \]
  The derivatives of $a$, $b$, and $c$ satisfy the following estimates
  \[
  |a_\sigma|\leq C, \qquad |b_\sigma|\leq C\epsilon, \quad \text{ and } \quad \|c_\sigma(\cdot, \tau)\| \leq C e^{-\frac{1}{64}\delta(\tau)^{-2\theta}}.
  \]
\end{prop}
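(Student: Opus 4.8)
The plan is to estimate each of $a$, $b$, $c$ and their first $\sigma$-derivatives separately, on the interval $|\sigma|\le\delta(\tau)^{-\theta}$, using the pointwise bounds of Lemma~\ref{lemma-simon-crucial} as the main input together with the explicit support properties of $\chi$. First I would handle $b(\sigma,\tau)=-\frac{v}{2(1+v)}$: by Lemma~\ref{lemma-simon-crucial}, $|v(\sigma,\tau)|\le C\delta(\tau)^{1/8}$ on this interval, and since $\delta(\tau)\to 0$ as $\tau\to-\infty$ we can make $|v|\le\frac12$ for $\tau\le\tau_0$, so $|1+v|\ge\frac12$ and hence $|b|\le C\delta(\tau)^{1/8}\le C\epsilon$ for $\tau_0\ll 0$ chosen small enough (depending on $\epsilon$). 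For $b_\sigma=-\frac{v_\sigma}{2(1+v)^2}$ the same Lemma gives $|v_\sigma|\le C\delta(\tau)^{1/8}$, so $|b_\sigma|\le C\delta(\tau)^{1/8}\le C\epsilon$.

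Next I would treat $a(\sigma,\tau)=-J[v]+\frac{v_\sigma}{1+v}$. The second term is bounded by $C\delta(\tau)^{1/8}\le C\epsilon$ exactly as for $b$. For the nonlocal term $J[v]=2\int_0^\sigma \frac{v_{\sigma\sigma}}{1+v}\,d\sigma'$ one integrates by parts, writing $J[v]=2\,\frac{v_\sigma}{1+v}\big|_0^\sigma + 2\int_0^\sigma \frac{v_\sigma^2}{(1+v)^2}\,d\sigma'$ and using $v_\sigma(0,\tau)=0$; the boundary term is again $O(\delta^{1/8})$, while the integral is bounded by $C\,\delta(\tau)^{1/4}\cdot|\sigma|\le C\,\delta(\tau)^{1/4}\delta(\tau)^{-\theta} = C\,\delta(\tau)^{1/4-\theta}$, which tends to $0$ since $\theta=\frac1{100}<\frac14$; so $|a|\le C\epsilon$ for $\tau\le\tau_0$. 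For $a_\sigma$ we have $a_\sigma=-2\frac{v_{\sigma\sigma}}{1+v}+\bigl(\frac{v_\sigma}{1+v}\bigr)_\sigma = -\frac{v_{\sigma\sigma}}{1+v}+\frac{v_{\sigma\sigma}}{1+v}-\frac{v_\sigma^2}{(1+v)^2}$ after cancellation---more carefully, $a_\sigma=-J[v]_\sigma+\partial_\sigma\!\bigl(\frac{v_\sigma}{1+v}\bigr)=-2\frac{v_{\sigma\sigma}}{1+v}+\frac{v_{\sigma\sigma}}{1+v}-\frac{v_\sigma^2}{(1+v)^2}=-\frac{v_{\sigma\sigma}}{1+v}-\frac{v_\sigma^2}{(1+v)^2}$, so $|a_\sigma|\le C(|v_{\sigma\sigma}|+|v_\sigma|^2)\le C$, using the uniform bound on second derivatives of $v$ from parabolic regularity (as in Lemma~\ref{prop-delta-Lipschitz}). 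Note one only claims $|a_\sigma|\le C$, not $\le C\epsilon$, which is why this term is harmless in the variation-of-constants argument.

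Finally, for $c(\sigma,\tau)$ I would use that $\chi_\sigma$, $\chi_{\sigma\sigma}$, and $\chi_\tau$ are all supported in the annulus $\frac12\delta^{-\theta}\le|\sigma|\le\delta^{-\theta}$ and are bounded there (with $\|\chi_\sigma\|_\infty\le C\delta^\theta$, etc., from \eqref{eq-chi-def}). On that annulus the bracket $\{-\frac{vv_\sigma}{1+v}+J[v]v-2v_\sigma\}$ is bounded (by the estimates just obtained, $J[v]$ is bounded and $v,v_\sigma$ are $O(\delta^{1/8})$), and $(\chi_\tau-\chi_{\sigma\sigma}+\frac\sigma2\chi_\sigma)v$ is likewise bounded pointwise. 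Hence $|c(\sigma,\tau)|\le C$ on the annulus and $0$ elsewhere, so
\[
\|c(\cdot,\tau)\|^2=\int_{\frac12\delta^{-\theta}\le|\sigma|\le\delta^{-\theta}} c^2\,e^{-\sigma^2/4}\,d\sigma \le C\int_{|\sigma|\ge\frac12\delta^{-\theta}} e^{-\sigma^2/4}\,d\sigma\le C\,e^{-\frac1{16}\delta^{-2\theta}},
\]
giving $\|c(\cdot,\tau)\|\le C e^{-\frac1{64}\delta^{-2\theta}}$ after absorbing constants; the same reasoning applies to $c_\sigma$, noting that differentiating $c$ produces at worst one more derivative of $\chi$ or of $v$, all still bounded on the annulus. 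The main obstacle is organizational rather than deep: one must keep careful track of the cancellation in $a_\sigma$ (so as not to lose a factor of $\delta^{-\theta}$ from a stray $J[v]_\sigma$ term) and of the precise powers of $\delta$ in the nonlocal term $J[v]$, ensuring $\frac14-\theta>0$; everything else is a routine combination of Lemma~\ref{lemma-simon-crucial}, the support of $\chi$, and Gaussian tail integration.
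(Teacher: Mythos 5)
Your overall strategy (pointwise bounds from Lemma \ref{lemma-simon-crucial} on $|\sigma|\le\delta(\tau)^{-\theta}$ for $a$ and $b$, plus the support of $\chi$ and Gaussian tail integration for $c$ and $c_\sigma$) is essentially the paper's, and your treatment of $J[v]$ by integration by parts using $v_\sigma(0,\tau)=0$ and the smallness of $\delta^{1/4-\theta}$ is a legitimate variant of the paper's one-line bound $|J[v]|\le\int_0^\sigma\frac{|dv_\sigma|}{1+v}\le C\epsilon$. However, there are two points where your justification does not hold as written. First, for $|a_\sigma|\le C$ (and implicitly for $\|c_\sigma\|$) you need $|v_{\sigma\sigma}|\le C$ on the whole expanding interval $|\sigma|\le\delta(\tau)^{-\theta}$, and you attribute this to ``parabolic regularity as in Lemma \ref{prop-delta-Lipschitz}''. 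That lemma only yields uniform derivative bounds on a \emph{fixed} compact interval $|\sigma|\le\sigma_0$, whereas here the interval grows as $\tau\to-\infty$; moreover the equation in commuting variables has the unbounded drift $-\frac\sigma2\partial_\sigma$ there. The paper instead gets the bound from the geometric estimate of Lemma \ref{lemma-bound-Q}, namely $0\leq -u_{\sigma\sigma}\leq\frac{1-u_\sigma^2}{u}\leq\frac1u$, combined with $u=\sqrt2(1+v)$ bounded away from zero on $|\sigma|\le\delta^{-\theta}$ by Lemma \ref{lemma-simon-crucial}; you should cite that, not parabolic regularity.

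Second, your intermediate claim that $c$ is ``bounded pointwise'' on the annulus is false because of the term $v\,\chi_\tau$: one only has $|\chi_\tau|\leq C\,\delta(\tau)^{-1}\delta'(\tau)$, and while $\delta'$ is bounded (Lemma \ref{prop-delta-Lipschitz}), the factor $\delta(\tau)^{-1}$ blows up, so $|c|\le C$ does not hold as stated. The conclusion survives because the factor is only polynomial in $\delta^{-1}$ and the Gaussian weight on the support $|\sigma|\geq\frac12\delta^{-\theta}$ gives $e^{-\sigma^2/4}\leq e^{-\frac1{16}\delta^{-2\theta}}$, which absorbs any such power after weakening the exponent to $\frac1{64}$ --- this is exactly how the paper treats $\|\chi_\tau(\cdot,\tau)\|$, which it singles out as the hardest term. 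So your argument is repairable with a one-line bookkeeping fix (carry the $\delta^{-1}\delta'$ factor into the tail estimate rather than asserting pointwise boundedness), but as written that step, and the citation for the $v_{\sigma\sigma}$ bound, are genuine gaps.
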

\begin{proof}
  Given any $\epsilon>0$, Lemma \ref{lemma-simon-crucial} guarantees the existence of a $\tau_0\ll0$ such that for all $\tau\leq \tau_0$ and $|\sigma|\leq \delta(\tau)$ one has $|v(\sigma,\tau)|+|v_\sigma(\sigma,\tau)|\leq \epsilon$.
  For the nonlocal term $J[v]$ we therefore have
  \[
  |J[v]| \leq \int_0^\sigma \frac{|dv_\sigma|}{1+v} \leq C\epsilon.
  \]
  The coefficient $c(\sigma,\tau)$ is bounded by
  \[
  |c(\sigma,\tau)| \leq C \, |\chi_\sigma| + \big|\chi_\tau - \chi_{\sigma\sigma} + \frac12 \sigma\chi_\sigma\big|
  \]
  Recall that $\chi(\sigma,\tau)=\hat\chi\left(\delta(\tau)^{\theta}\sigma\right)$.  This implies that
  \[
  |\chi_\sigma| \leq C\delta(\tau)^\theta, \qquad 
  |\chi_{\sigma\sigma}| \leq C \delta(\tau)^{2\theta}, \qquad 
  |\sigma\chi_\sigma| \leq C,
  \]
  and also
  \[
  |\chi_\tau(\sigma,\tau)| \leq \theta \, \delta(\tau)^{\theta-1}\delta'(\tau) \, \sigma \, \chi'(\delta(\tau)^\theta \sigma) \leq C \, \delta(\tau)^{-1} \, \delta'(\tau).
  \]
  The hardest term to estimate is $\| \chi_\tau (\cdot, \tau)\|$.  We have
  \[
  \| \chi_\tau (\cdot, \tau)\|^2 \leq C\int_{\frac12\delta(\tau)^{-\theta}}^\infty \delta(\tau)^{-2}\delta'(\tau)^2 e^{-\sigma^2/4} d\sigma \leq C \delta(\tau)^{-1}\delta'(\tau)^2 e^{-\frac1{16}\delta(\tau)^{-2\theta}}
  \]
  which implies
  \[
  \|\chi_\tau (\cdot, \tau)\| \leq C \, \delta(\tau)^{-1/2} e^{-\frac1{32}\delta(\tau)^{-2\theta}} \delta'(\tau).
  \]
  The desired estimate for $ \|\chi_\tau (\cdot, \tau)\|$ now follows from Lemma \ref{prop-delta-Lipschitz} which guarantees that $\delta'(\tau)$ is uniformly bounded.  The rest of the terms in $c(\sigma, \tau)$ are easy to estimate.
  
  \sk
  To estimate the derivatives of the coefficients we use Lemma~\ref{lemma-bound-Q} which tells us that
  \[
  0\leq -u_{\sigma\sigma} \leq \frac{1-u_\sigma^2}{u}\leq \frac 1 u,
  \]
  and hence that $u_{\sigma\sigma}$ and $\bar v_{\sigma\sigma}$ are uniformly bounded for $|\sigma| \leq 2\delta(\tau)^{-\theta}$.
  We have
  \[
  a_\sigma = 2\frac{u_{\sigma\sigma}}{u} + \frac{v_{\sigma\sigma}}{1+v} -\frac{v_\sigma^2}{(1+v)^2},
  \]
  which implies that $a_\sigma$ is uniformly bounded if $|\sigma| \leq 2\delta(\tau)^{-\theta}$.  For $b_\sigma$ we have
  \[
  b_\sigma = \frac{v_\sigma}{2(1+v)^2},
  \]
  which implies $|b_\sigma|\leq C\epsilon$.  Finally, for $c_\sigma$ we differentiate~\eqref{eq-c-defined} with respect to $\sigma$, which leads to many terms, namely
  \begin{multline*}
    \frac{\pd c}{\pd\sigma}= \left\{ \frac{(n-1) v v_\sigma}{1+v} -J[v]v -2 v_\sigma \right\} \chi_{\sigma\sigma} + \chi_{\sigma} \frac{\pd}{\pd\sigma}\left\{ \frac{(n-1) v v_\sigma}{1+v} -J[v]v -2 v_\sigma
    \right\} \\
    +v \, \frac{\pd}{\pd\sigma}\bigl(\chi_\tau - \chi_{\sigma\sigma} + \frac\sigma2 \chi_\sigma\bigr) +\bigl(\chi_\tau - \chi_{\sigma\sigma} + \frac\sigma2 \chi_\sigma\bigr) v_\sigma\ .
  \end{multline*}
  Using $\chi(\sigma,\tau) = \hat\chi(\delta(\tau)^\theta \sigma)$, the boundedness of $\delta'(\tau)$ (Lemma \ref{prop-delta-Lipschitz}), and our bounds for $v$, $v_\sigma$, and $v_{\sigma\sigma}$ we find that $c_\sigma$ is bounded by $C \, \delta(\tau)^{m}$,
  for some constants $C$ and $m$.  Furthermore $c_\sigma$ is supported in the region where $\frac12\delta(\tau)^{-\theta} \leq \sigma \leq \delta(\tau)^{-\theta}$.  This implies the stated estimate for $\|c_\sigma\|$.
  
\end{proof}

\subsection{Spectral decomposition and the dominant mode}
Our linearized operator $\cL$ given by \eqref{eqn-oper} is self-adjoint in the Hilbert Space
\[
\cH= L^2(\R , e^{-\sigma^2/4}\, d\sigma), \qquad
\langle f, g\rangle
= \int_{\R } f(\sigma) g(\sigma) e^{-\sigma^2/4}\, d\sigma.
\]
It satisfies
\[
1-\cL = \pd_\sigma^*\pd_\sigma,
\]
where $\pd_\sigma^*=-\pd_\sigma+\frac{\sigma}{2}$ is the adjoint of $\pd_\sigma$.  Using the identity $[\pd_\sigma^*, \pd_\sigma] = -\frac12$ one finds that
\begin{subequations}
  \label{eq-derivatives-from-L}
  \begin{align}
    &\|f_\sigma\|^2 = \langle f, (1-\cL)f\rangle = \|\sqrt{1-\cL}f\|^2 \\
    &\|f_{\sigma\sigma}\|^2 + \frac12\|f_\sigma\|^2
    = \|(1-\cL)f\|^2
  \end{align}
\end{subequations}

\sk

The operator $\cL$ generates an analytic semigroup on $\cH$ and by the spectral theorem one has the estimates
\begin{equation}
  \label{eq-semigroup-smoothing}
  \|(1-\cL)^re^{t\cL}f\|\leq \frac{C_r}{t^r}e^t
\end{equation}
for all $t>0$.

The operator $\cL$ has a discrete spectrum.  When restricted to reflection symmetric functions, its eigenvalues
are given by $\{\lambda_k\}_{k=0}^{\infty}$, where $\lambda_k = 1 - k$.  The corresponding eigenvectors are the Hermite polynomials $h_{2k}(\sigma)$, where $h_0(\sigma) = 1$, $h_2(\sigma) = \sigma^2 - 2$, etc.  Let us write $\cH= \cH_0 \oplus \cH_+ \oplus \cH_-$, where $\cH_+$ is spanned by $h_0$, $\cH_0$ is spanned by $h_2$, and $\cH_-$ is spanned by the remaining eigenfunctions $\{h_4, h_6, \dots\}$,
and denote by $\mathcal P_+$, $\mathcal P_0$ and $\mathcal P_-$ denote the orthogonal projections associated with the direct sum $\cH= \cH_0 \oplus \cH_+ \oplus \cH_-$.  Also, we define
\[
\bv_\pm(\cdot, \tau) = \mathcal P_\pm\bigl[\bv(\cdot, \tau)\bigr],\quad
\bv_0(\cdot, \tau) = \mathcal P_0\bigl[\bv(\cdot, \tau)\bigr]
\]
so that
\[
\label{eq-proj}
\bv (y,\tau) = \bv_+(y,\tau) + \bv_0(y,\tau) + \bv_-(y,\tau).
\]
It will be convenient to abbreviate
\[
\Gamma(\tau) := \sup_{\tau'\le\tau}\, \|\bar{v}(\cdot,\tau')\|
\]
and similarly
\[
\Gamma_+(\tau) := \sup_{\tau'\leq\tau} \|\bar{v}_+(\tau')\|, \qquad
\Gamma_-(\tau) := \sup_{\tau'\leq\tau} \|\bar{v}_-(\tau')\|, \qquad
\Gamma_0(\tau) := \sup_{\tau'\leq\tau} \|\bar{v}_0(\tau')\|.
\]

\sk
\sk

In Proposition \ref{lemma-prevails-v} we will show that $|\Gamma_{\pm}(\tau)|$ are small compared to the quantity
\begin{equation}
  \label{eq-alpha-star-def}
  \alpha_*(\tau) = \sup_{\tau'\leq \tau} |\alpha(\tau)|, \qquad \alpha(\tau) = \int \bar{v} \, h_2 \, e^{-\sigma^2/4}\, d\sigma.
\end{equation}
and show that the latter dominates for $\tau \ll -1$.  Note that by definition $\alpha_*(\tau)$ is a nondecreasing function of $\tau$ and that
$\Gamma_0(\tau) = \alpha_*(\tau) \, \|h_2\|$ but it is more convenient
in terms of notation to work with $\alpha_*(\tau).  $ The first step to showing this result is the next lemma
where we show that
either $\Gamma_+(\tau)$ dominates or $\Gamma_0(\tau)$ does.

\begin{lemma} Either
  \[
  \Gamma_-(\tau) + \Gamma_0(\tau) = o(\Gamma_+(\tau))
  \qquad (\tau \le\tau_0),
  \]
  or
  \[
  \Gamma_-(\tau) + \Gamma_+(\tau) = o(\Gamma_0(\tau))
  \qquad (\tau \le\tau_0).
  \]
\end{lemma}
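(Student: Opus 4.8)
The plan is to run the standard ``ODE-lemma'' dichotomy argument for the three projections $\bv_+,\bv_0,\bv_-$, which are governed by the variation of constants formula \eqref{eq-VOC}. First I would project \eqref{eq-vbar-linear-inhomog} onto the three invariant subspaces $\cH_+,\cH_0,\cH_-$. Since $\cL$ has eigenvalue $\lambda_0=1$ on $\cH_+$, eigenvalue $\lambda_1=0$ on $\cH_0$, and eigenvalues $\le -1$ on $\cH_-$, the semigroup $e^{t\cL}$ is expanding, neutral, and contracting on these respectively; concretely $\|e^{t\cL}\|_{\cH_+}=e^{t}$, $\|e^{t\cL}\|_{\cH_0}=1$, and $\|e^{t\cL}\|_{\cH_-}\le e^{-t}$ for $t\ge 0$. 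Using the coefficient bounds from the previous Proposition, namely $\|a\bv_\sigma+b\bv\|\le C\epsilon(\|\bv\|+\|\bv_\sigma\|)$ and $\|c(\cdot,\tau)\|\le Ce^{-\frac1{64}\delta(\tau)^{-2\theta}}$, together with the smoothing estimate \eqref{eq-semigroup-smoothing} to absorb the $\bv_\sigma$ term (paying a factor $t^{-1/2}$ that is integrable on $[\tau-1,\tau]$), one obtains differential inequalities of the schematic form
\begin{align*}
  \frac{d}{d\tau}\|\bv_+\| &\ge \|\bv_+\| - C\epsilon\,(\|\bv_+\|+\|\bv_0\|+\|\bv_-\|) - \text{(exponentially small)},\\
  \left|\frac{d}{d\tau}\|\bv_0\|\right| &\le C\epsilon\,(\|\bv_+\|+\|\bv_0\|+\|\bv_-\|) + \text{(exp.\ small)},\\
  \frac{d}{d\tau}\|\bv_-\| &\le -\|\bv_-\| + C\epsilon\,(\|\bv_+\|+\|\bv_0\|+\|\bv_-\|) + \text{(exp.\ small)},
\end{align*}
valid for $\tau\le\tau_0(\epsilon)$, where the exponentially small errors are controlled because $\delta(\tau)\to 0$.

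Next I would invoke the abstract ODE lemma (as in \cite{ADS1}, or Merle--Zaag): given three nonnegative Lipschitz functions satisfying such a system with $C\epsilon$ small, either the ``plus'' mode eventually dominates so that $\bv_-+\bv_0=o(\bv_+)$, or the neutral/stable part dominates so that $\bv_+ + \bv_- = o(\bv_0 + \bv_-)$, and moreover in the latter case the stable mode is itself dominated by the neutral one. To pass from the instantaneous inequalities to statements about the running suprema $\Gamma_\pm(\tau),\Gamma_0(\tau)$ I would argue exactly as in \cite{ADS1}: because $\alpha_*(\tau)$ (hence $\Gamma_0$) is nondecreasing and, by Lemma \ref{lemma-simon-crucial}, $\delta(\tau)^4\le C\Gamma(\tau)^2$, the quantities $\Gamma_+,\Gamma_0,\Gamma_-$ inherit comparable differential inequalities and the same dichotomy holds for them. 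This yields precisely the two stated alternatives: either $\Gamma_-(\tau)+\Gamma_0(\tau)=o(\Gamma_+(\tau))$ or $\Gamma_-(\tau)+\Gamma_+(\tau)=o(\Gamma_0(\tau))$ as $\tau\le\tau_0$.

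The main obstacle is not the ODE lemma itself but the bookkeeping needed to justify that the errors coming from the cutoff $\chi$ and the nonlocal term $J[v]$ are genuinely negligible in this argument. Specifically, one must check that the inhomogeneous term $c$, which is supported in $\frac12\delta^{-\theta}\le|\sigma|\le\delta^{-\theta}$, contributes only $O(e^{-c\delta^{-2\theta}})$ in $\cH$ and that, after the spectral decomposition, its projections onto $\cH_+$ and $\cH_0$ (finite-dimensional, hence with no smoothing loss) are still this small — which is automatic since the bound is in $\cH$-norm. One also needs that the cross terms $C\epsilon(\|\bv_+\|+\|\bv_0\|+\|\bv_-\|)$ really can be made small by choosing $\tau_0$ negative enough; this is exactly where the previous Proposition's estimates $|a|+|b|\le C\epsilon$ on $|\sigma|\le\delta(\tau)^{-\theta}$ are used, together with the fact that $\bv$ is supported in $|\sigma|\le\delta(\tau)^{-\theta}$. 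Once these are in hand the dichotomy is a formal consequence, so I expect the proof to be short, deferring the analytic content to the coefficient estimates already established and to the ODE lemma recalled from \cite{ADS1}.
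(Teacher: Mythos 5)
Your proposal is correct and follows essentially the same route as the paper: project the truncated equation onto $\cH_+\oplus\cH_0\oplus\cH_-$, control the cutoff and nonlocal errors via \eqref{eq-E0-simon} together with the interpolation bound $\delta(\tau)^4\lesssim \Gamma(\tau)$ (the paper's key observation \eqref{eq-key-simon}), and feed the resulting inequalities for the monotone running suprema into a Merle--Zaag type dichotomy (the paper iterates the variation-of-constants formula over unit time steps and quotes Lemma 3.9 of \cite{Br}, while you phrase it as differential inequalities and quote the ODE lemma of \cite{ADS1} --- the same mechanism). The only point to keep straight is the one you already flag: the exponentially small forcing is $o(\Gamma(\tau))$ only through $\delta(\tau)\to 0$ and the interpolation inequality, not $o$ of the instantaneous norms, so the dichotomy must be run on $\Gamma_+,\Gamma_0,\Gamma_-$ themselves, exactly as in \eqref{eq-first-ineq-simon}.
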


\begin{proof}
  Using the variation of constants formula we can represent the solution $\bar{v}(\sigma,\tau)$ to \eqref{eq-bar-v-another} as
  \begin{equation}
    \label{eq-representation}
    \bar{v}(\tau+1) = e^{\mathcal{L}} \bar{v}(\tau) + \int_{\tau}^{\tau+1} e^{(\tau+1-\tau')\, \mathcal{L}}\, E_0(\cdot,\tau')\, d\tau'.
  \end{equation}
  Applying the projections $\mathcal P_\pm$, and using the description of the spectrum of $\mathcal{L}$ given above, we have the following representations for the projections $\bv_\pm$ and $\bv_0$:
  \[
  \begin{aligned}
    &\bar{v}_+(\tau+1) = e\, \bar{v}_+(\tau) + \int_{\tau}^{\tau+1} e^{\tau+1-\tau'}\, \mathcal{P}_+E_0(\cdot,\tau')\, d\tau' \\
    &\bar{v}_-(\tau+1) = e^{\mathcal{L}} \bar{v}_-(\tau) + \int_{\tau}^{\tau+1} e^{(\tau+1-\tau')\, \mathcal{L}}\, \mathcal{P}_- \,
    E_0(\cdot,\tau')\, d\tau'\\
    &\bar{v}_0(\tau+1) = \bar{v}_0(\tau) + \int_{\tau}^{\tau+1} \mathcal{P}_0\, E_0(\cdot,\tau') \, d\tau'.
  \end{aligned}
  \]
  By \eqref{eq-E0-simon} we have
  \begin{equation*}
    \label{eq-cor-simon}
    \begin{split}
      \|\bar{v}_+(\tau+1)\| \ge e\, \|\bar{v}_+(\tau)\| - C \int_{\tau}^{\tau+1} \delta(\tau')^{\frac{\theta}2}\, \|\bar{v}(\tau')\| d\tau' - C \int_{\tau}^{\tau+1} e^{-\frac{1}{16}\delta(\tau')^{-2\theta}} d\tau'.
    \end{split}
  \end{equation*}
  Given $\tau_1 < 0$, choose $\tau_2 \le \tau_1$ so that $\Gamma_+(\tau_1) = \|\bar{v}(\tau_2)\|$.  Then, using that both $\Gamma_+(\tau)$ and $\delta(\tau)$ are nondecreasing functions in time and the last estimate, we obtain
  \begin{align*}
    \Gamma_+(\tau_1+1) &\ge \|\bar{v}_+(\tau_2+1)\| \\
    &\ge e \, \|\bar{v}_+(\tau_2)\| - C \int_{\tau_2}^{\tau_2+1} \delta(\tau')^{\frac \theta2}\, \|\bar{v}(\tau')\| d\tau' - C \int_{\tau_2}^{\tau_2+1} e^{-\frac{1}{16}\delta(\tau')^{-2\theta}} d\tau' \\
    &\ge e\, \Gamma_+(\tau_1) - C\, \delta(\tau_2 + 1)^{\theta}\, \Gamma_+(\tau_2 + 1) - C\, e^{-\frac{1}{16}\delta(\tau_2+1)^{-2\theta}} \\
    &\ge e\, \Gamma_+(\tau_1) - C\, \delta(\tau_1 + 1)^{\theta}\, \Gamma_+(\tau_1 + 1) - C\, e^{-\frac{1}{16}\delta(\tau_1+1)^{-2\theta}}.
  \end{align*}
  This implies that for every $\tau\le \tau_0 \ll -1$ we have
  \begin{equation}
    \label{eq-simon-1}
    \Gamma_+(\tau) \le e^{-1}\, \Gamma_+(\tau+1) + C\, \delta(\tau + 1)^{\theta}\, \Gamma_+(\tau + 1) + C\, e^{-\frac{1}{16}\delta(\tau+1)^{-2\theta}}.
  \end{equation}
  Regarding the negative mode, using \eqref{eq-E0-simon} and the representation for $\bar{v}_-$ we also have that
  \[
  \|\bar{v}_-(\tau+1)\| \le e^{-1}\, \|\bar{v}_-(\tau)\| + C \int_{\tau}^{\tau+1} \delta(\tau')^{\frac \theta2}\, \|\bar{v}(\tau')\|\, d\tau' + \int_{\tau}^{\tau+1} e^{-\frac{1}{16}\delta(\tau')^{-2\theta}}\, d\tau',
  \]
  implying that for all $\tau_1 \leq \tau_0$, we have
  \begin{align*}
    \Gamma_-(\tau_1 + 1) &:= \sup_{\tau \le \tau_1} \|\bar{v}_-(\tau+1)\| \\
    &\le e^{-1} \, \sup_{\tau \le \tau_1} \|\bar{v}_-(\tau)\| + C\, \delta(\tau_1+1)^{\frac \theta2} \Gamma_-(\tau_1 + 1) + C\, e^{-\frac{1}{16}\delta(\tau_1+1)^{-2\theta}}.
  \end{align*}
  Since $\tau_1 \le \tau_0 \ll -1$ is arbitrary, we immediately get that for all $\tau \le \tau_0 \ll -1$ the following holds
  \begin{equation}
    \label{eq-simon-2}
    \Gamma_-(\tau) \ge e\, \Gamma_-(\tau+1) - C\, \delta(\tau+1)^{\frac \theta2} \Gamma_-(\tau + 1) - C\, e^{-\frac{1}{16}\delta(\tau+1)^{-2\theta}}.
  \end{equation}
  Finally, using the representation for $\bar{v}_0$ and \eqref{eq-E0-simon}, similarly to the other two projections we obtain
  \[
  |\Gamma_0(\tau+1) - \Gamma_0(\tau)| \le C\, \delta(\tau+1)^{\frac \theta2} \Gamma_0(\tau + 1) + C\, e^{-\frac{1}{16}\delta(\tau+1)^{-2\theta}}.
  \]
  
  \sk
  \sk
  Next, the key observation (as in \cite{Br}) is that
  \begin{equation}
    \label{eq-key-simon}
    \delta(\tau)^4 \le C \, \Gamma(\tau),
  \end{equation}
  by standard interpolation inequalities.  Consequently,
  \[
  e^{-\frac{1}{16}\delta(\tau+1)^{-2\theta}} \le C\, \delta(\tau)^5 \le C\, \delta(\tau)\, \Gamma(\tau).
  \]
  Thus, we conclude that
  \begin{equation}
    \label{eq-first-ineq-simon}
    \begin{split}
      & \Gamma_+(\tau) \le e^{-1} \Gamma_+(\tau+1) + C\, \delta(\tau+1)^{\theta/2}\, \Gamma_+(\tau+1) \\
      & |\Gamma_0(\tau+1) - \Gamma_0(\tau)| \le C\, \delta(\tau+1)^{\theta/2}\, \Gamma_0(\tau+1)\\
      & \Gamma_-(\tau) \ge e\, \Gamma_-(\tau+1) - C\, \delta(\tau+1)^{\theta/2}\, \Gamma_-(\tau+1).
    \end{split}
  \end{equation}
  The rest of the proof is identical to the proof of Lemma 3.9 in \cite{Br}.
\end{proof}

We will next show the crucial for our purposes result which states that $\Gamma_0(\tau)$ actually dominates for $\tau \ll -1$.

\begin{proposition} \label{lemma-prevails-v}
  There exists a $\tau_0 \ll -1$ so that for all $\tau \le \tau_0$ we have
  \[
  \Gamma_+(\tau) + \Gamma_-(\tau) = o(\Gamma_0(\tau)).
  \]
  
\end{proposition}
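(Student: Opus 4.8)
The plan is to argue by contradiction, using the dichotomy just proved. By the previous lemma, as $\tau\to-\infty$ either $\Gamma_-(\tau)+\Gamma_0(\tau)=o(\Gamma_+(\tau))$ or $\Gamma_-(\tau)+\Gamma_+(\tau)=o(\Gamma_0(\tau))$; the second alternative is exactly the assertion of the proposition, so it is enough to rule out the first (``unstable'') alternative. Assume then that $\Gamma_-(\tau)+\Gamma_0(\tau)=o(\Gamma_+(\tau))$ for $\tau\le\tau_0$, and write $\bv(\cdot,\tau)=\beta(\tau)\,h_0+w(\cdot,\tau)$ with $\beta(\tau):=\|h_0\|^{-2}\langle\bv(\cdot,\tau),h_0\rangle$ and $w(\cdot,\tau)\perp h_0$, $\|w(\cdot,\tau)\|=o(\beta(\tau))$. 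Then $v(\sigma,\tau)=\beta(\tau)\,(1+o(1))$ uniformly on $|\sigma|\le\tfrac12\delta(\tau)^{-\theta}$, and $\beta\not\equiv 0$ since the solution is not the cylinder.

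The first step is to extract a scalar ODE for $\beta$. Projecting equation \eqref{eq-vbar-linear-inhomog} onto $h_0$, using self-adjointness ($\langle\cL\bv,h_0\rangle=\langle\bv,h_0\rangle$) and the coefficient estimates $|a|+|b|\le C\epsilon$, $\|c(\cdot,\tau)\|\le Ce^{-\frac1{64}\delta^{-2\theta}}$ proved above, the only nonlinear term contributing at order $\beta^2$ is the reaction term $-\tfrac{v^2}{2(1+v)}$, whose $h_0$-projection is $-\tfrac12\beta^2\|h_0\|^2(1+o(1))$; every other contribution carries a factor of $w$ or of $v_\sigma$ and is $o(\beta^2)$, using that $\|v_\sigma\|$ is controlled by $\Gamma_-(\tau)+\Gamma_0(\tau)=o(\beta)$ and that $J[v]=2v_\sigma(1+o(1))$ where $\chi\equiv 1$. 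This gives
\[
\beta'(\tau)=\beta(\tau)-\tfrac12\,\beta(\tau)^2+o\bigl(\beta(\tau)^2\bigr),\qquad\tau\le\tau_0 .
\]
Since $\beta'(\tau)=\beta(\tau)(1+o(1))$, the function $\beta$ has a fixed sign and is monotone for $\tau\ll-1$, and as $\beta(\tau)\to0$, integrating backward yields $\beta(\tau)=C_0\,e^{\tau}(1+o(1))$ with $C_0\ne 0$. Hence $\delta(\tau)\le Ce^{\tau}$ and $\Gamma(\tau)\le Ce^{\tau}$: in the unstable regime the rescaled solution would converge to the round cylinder exponentially fast in $\tau$, and $v(\cdot,\tau)$ would stay within $o(e^{\tau})$ of the constant $\beta(\tau)$ on the exponentially long interval $|\sigma|\le\tfrac12\delta(\tau)^{-\theta}\sim e^{\theta|\tau|}$.

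The hard part, and the crux of the proposition, is to derive a contradiction from this; I expect this to be done following \cite{ADS1} and \cite{Br}. The idea is to feed $\delta(\tau)\le Ce^{\tau}$ into Brendle's barriers: applying Proposition~\ref{prop-Br2} with a parameter $a=a(\tau)\to\infty$ chosen so that $a(\tau)^{-2}$ is comparable to $\delta(\tau)$ --- admissible once $\tau\ll -1$ --- one gets $Y(u,\tau')=u_\sigma^2\le Y_{a(\tau)}(u)\le C\delta(\tau)$ for $\tfrac12\le u\le u(0,\tau')$ and $\tau'\le\tau$, and near $u=\sqrt2$ the finer estimate $u_\sigma^2\le Y_{a(\tau)}(u)\approx 2\,|v|\,a(\tau)^{-2}$ coming from \eqref{eqn-asym-Ya}. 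Integrating $du/|u_\sigma|$ then yields a lower bound on the $\sigma$-width of the neck, which one must play against the ODE information and against a priori control on $\sigma_\pm(\tau)$ obtained from the curvature bound \eqref{eq-curv-bound}, Lemma~\ref{lemma-point-q} and Proposition~\ref{prop-back-limit}. Once the unstable alternative is ruled out, the proposition follows at once from the dichotomy.
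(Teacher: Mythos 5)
Your first half tracks the paper closely: the reduction via the dichotomy, the projection of the equation for $\bar v$ onto $h_0$, the resulting ODE $\beta'(\tau)=\beta(\tau)+O\bigl(\beta(\tau)^2\bigr)$, and the conclusion $\beta(\tau)\approx C_0e^{\tau}$ are exactly the paper's steps (the paper first iterates the discrete inequalities \eqref{eq-first-ineq-simon} and \eqref{eq-iterate-simon} to get $\Gamma_+(\tau)=O(e^{\tau})$ and $\delta(\tau)\le e^{\tau/8}$ before invoking the ODE, and it appeals to an argument as in Lemma 5.8 of \cite{ADS1} to show $C_0\neq 0$, which you assert rather than prove --- a minor but real omission, since $\beta$ could a priori vanish or decay faster than $e^\tau$ without further input from the dominance assumption).

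The genuine gap is the final contradiction, which you yourself call the crux and leave as an expectation. The paper does not derive it from Brendle's barriers and a neck-width estimate; it uses the reparametrization argument of Lemma 5.11 in \cite{ADS1}: the eigenfunction $h_0$ with eigenvalue $1$ is precisely the mode generated by perturbing the blow-up time $T$ in the rescaling \eqref{eqn-defnu} (replacing $T$ by $T+\epsilon$ changes $u$ by roughly a constant multiple of $\epsilon\,e^{\tau}$), so the coefficient $K$ of $e^{\tau}$ can be normalized away by an admissible change of $T$, and this, played against $K\neq 0$, produces the contradiction. Your sketched alternative is not carried out, and it is unclear what it would contradict: exponentially fast convergence to the cylinder on an exponentially long $\sigma$-interval is not in evident conflict with anything established at this stage of the paper --- in particular there is no a priori upper bound on $\sigma_\pm(\tau)$ of the required strength (the behavior $\sigma_\pm(\tau)\sim \pm 2\sqrt{|\tau|}$ is only obtained later, as a consequence of the neutral-mode asymptotics you are in the middle of proving), and Proposition \ref{prop-Br2} with $a(\tau)^{-2}\sim\delta(\tau)$ only gives upper bounds on $Y=u_\sigma^2$, hence lower bounds on the neck length, which point in the unhelpful direction. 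So as written the proposal does not rule out the unstable alternative, and the key idea of the paper's proof (the time-shift gauge freedom behind the $h_0$-mode) is missing.
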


\begin{proof}
  We will assume that $\Gamma_0(\tau) + \Gamma_-(\tau) = o \big (\Gamma_+(\tau)\big )$ for all $\tau \le \tau_0$ and obtain contradiction.
  Some of the estimates here are similar to the ones we will use in the proof of Theorem \ref{prop-parabolic-asym} that follows next.
  We refer the reader to that proof for these details.
  \sk
  
  Under the assumption of reflection symmetry, $\lambda_+=1$ is the only positive eigenvalue and hence, similarly to obtaining \eqref{eq-simon-1} we can also get that
  \begin{equation}
    \label{eq-iterate-simon}
    \Gamma_+(\tau) \ge e^{-1} \Gamma_+(\tau+1) - C\, \delta(\tau+1)^{\theta/2}\, \Gamma_+(\tau+1).
  \end{equation}
  If we iterate \eqref{eq-first-ineq-simon} and \eqref{eq-iterate-simon}, like in \cite{Br} we obtain
  \[
  O(e^{(1+\epsilon)\tau}) \le \Gamma_+(\tau) \le O(e^{(1-\epsilon)\tau}),
  \]
  for every $\epsilon > 0$.  This together with \eqref{eq-key-simon} imply
  $\delta(\tau) \le e^{\frac{\tau}{8}}$.  Using this and iterating \eqref{eq-first-ineq-simon} and \eqref{eq-iterate-simon} again yield
  \begin{equation}
    \label{eq-positive-rate}
    \Gamma_+(\tau) = O(e^{\tau}) \qquad \mbox{and thus} \qquad \Gamma(\tau) = O(e^{\tau}),
  \end{equation}
  by our assumption that $\Gamma_0(\tau) + \Gamma_-(\tau) \le o(\Gamma_+(\tau))$.
  
  \sk
  
  Recall that the eigenfunction corresponding to positive eigenvalue is $h_0 = 1$.  Multiply \eqref{eq-bar-v} by $h_0$ and integrate over $\mathbb{R}$ to get
  \[
  \beta_{\tau} = \beta + \int \Big(E + E_{\chi} + E_{nl} + \frac{1}{100} \,v \chi' \sigma \, \delta(\tau)^{-\frac{99}{100}} \delta'(\tau)\Big)\, d\mu.
  \]
  Under our assumption about the prevailing mode, following same arguments as in the proof of Proposition \ref{prop-error-alpha}
  below, we get
  \[
  \left|\int \Big(E + E_{\chi} + E_{nl}\Big)\, d\mu\right| = O\Big(\beta_*^2(\tau)\Big),
  \]
  where $\beta_*(\tau) = \sup_{\tau' \le\tau}|\beta(\tau')|$.  Similar arguments that we will use in the proof of Theorem \ref{prop-parabolic-asym} to derive differential inequalities for $\alpha(\tau)$, yield also that
  \[
  \beta_{\tau} = \beta + O\Big(\beta_*^2(\tau)\Big).
  \]
  Following the same arguments as in the proof of Theorem \ref{prop-parabolic-asym} we can show there exists a $\tau_0 \ll -1$ so that for $\tau \le \tau_0$,
  \[
  \beta_{\tau} = \beta + O\Big(\beta^2(\tau)\Big).
  \]
  By \eqref{eq-positive-rate} we have $|\beta(\tau)| = O(e^{\tau})$ and hence,
  \[
  \beta_{\tau} = \beta\, \Big(1 + O(e^{\tau})\Big).
  \]
  This implies
  \[
  \beta(\tau) = (K + O(e^{\tau}))\, e^{\tau}.
  \]
  We can argue as in the proof of Lemma 5.8 in \cite{ADS1} to claim that $K$ can not vanish.  The same reparametrization arguments that we used in the proof of Lemma 5.11 in \cite{ADS1} yield a contradiction.
  This concludes the proof of Proposition \ref{lemma-prevails-v}.
\end{proof}

\subsection{The proof of Theorem \ref{prop-parabolic-asym}} Using Proposition \ref{lemma-prevails-v} we are able to claim first the following result on derivatives of $\bar{v}$.

\begin{lemma}
  \label{lemma-Sigurd}
  There exists a $\tau_0 \ll -1$ so that for all $\tau \le \tau_0$ we have
  \[
  \|(1-\cL)^r\bar v(\tau)\|
  = O(\alpha_*(\tau))\qquad (\tau\to-\infty).
  \]
  for $0\leq r<\frac32$.  In particular, for $r=\frac12$ and $r=1$ this implies
  \[
  \|\bar v_{\pm,\sigma}(\tau)\| + \|\bar v_{\pm,\sigma\sigma}(\tau)\|
  = o(\alpha_*(\tau))
  \qquad (\tau\to-\infty),
  \]
  where $\bar{v}_{\pm}(\sigma,\tau) := \bar{v}_+(\sigma,\tau) + \bar{v}_-(\sigma,\tau)$.
\end{lemma}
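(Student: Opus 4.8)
The plan is to bootstrap regularity for $\bar v$ from the $L^2$-estimate already available, using the smoothing property of the semigroup $e^{t\cL}$ and the variation of constants formula \eqref{eq-VOC}. First I would recall from Proposition \ref{lemma-prevails-v} that $\Gamma_0(\tau)$ dominates, so $\Gamma(\tau) = \Gamma_0(\tau) + o(\Gamma_0(\tau)) = \alpha_*(\tau)\,\|h_2\| + o(\alpha_*(\tau))$; in particular $\|\bar v(\tau')\| = O(\alpha_*(\tau))$ for all $\tau' \le \tau$, since $\alpha_*$ is nondecreasing. Next, starting from \eqref{eq-VOC} with the splitting interval $[\tau-1,\tau]$, I apply $(1-\cL)^r$ to both sides and use the smoothing estimate \eqref{eq-semigroup-smoothing}, namely $\|(1-\cL)^r e^{t\cL}\| \le C_r t^{-r} e^t$. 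The homogeneous term $(1-\cL)^r e^{\cL}\bar v(\tau-1)$ is bounded by $C\|\bar v(\tau-1)\| = O(\alpha_*(\tau))$ directly (here $t=1$, no singularity). For the Duhem\-el term I need $(1-\cL)^r e^{(\tau-\tau')\cL}$ applied to $a\bar v_\sigma + b\bar v + c$; since $r < 3/2$, the factor $(\tau-\tau')^{-r}$ is integrable on $[\tau-1,\tau]$ only if I do not lose more than a full derivative, so I would instead move part of the derivative onto $\bar v_\sigma$ by writing $a\bar v_\sigma = \partial_\sigma(a\bar v) - a_\sigma \bar v$ and using $\|\partial_\sigma f\| = \|\sqrt{1-\cL}\,f\|$ from \eqref{eq-derivatives-from-L}, so that the worst term contributes $(1-\cL)^{r+1/2}e^{(\tau-\tau')\cL}$ acting on $a\bar v$, which is integrable provided $r+\tfrac12 < 1$...

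Actually, to reach the full range $r<3/2$, I would run this as a two-step bootstrap. \textbf{Step 1:} take $r \le 1/2$. Then $(1-\cL)^r e^{(\tau-\tau')\cL}(a\bar v_\sigma) $ costs at most $(1-\cL)^{r+1/2}e^{(\tau-\tau')\cL}$ on $a\bar v$, i.e.\ exponent $\le 1$, which is borderline; here I instead keep $\bar v_\sigma$ intact and only note $\|a\bar v_\sigma\| \le C\epsilon\|\bar v_\sigma\|$, estimating $\|\bar v_\sigma(\tau')\| \le \|\sqrt{1-\cL}\,\bar v(\tau')\|$, and close a Gronwall-type inequality for $\sup_{\tau'\le\tau}\|\sqrt{1-\cL}\,\bar v(\tau')\| / \alpha_*(\tau)$. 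The smallness of $\epsilon$ (from the Proposition bounding $|a|+|b|\le C\epsilon$ on $|\sigma|\le\delta^{-\theta}$, valid for $\tau\le\tau_0$) is what makes the fixed-point/Gronwall argument close; the term $\|c\| \le Ce^{-\delta^{-2\theta}/64}$ is superexponentially small and, via $\delta(\tau)^4 \le C\Gamma(\tau) \le C\alpha_*(\tau)$ (from \eqref{eq-key-simon} and Lemma \ref{lemma-simon-crucial}), is $o(\alpha_*(\tau))$. This yields $\|(1-\cL)^{1/2}\bar v(\tau)\| = O(\alpha_*(\tau))$, hence $\|\bar v_\sigma\| = O(\alpha_*)$. \textbf{Step 2:} with $\bar v_\sigma$ now controlled in $L^2$ by $\alpha_*$, feed this back: the inhomogeneity $a\bar v_\sigma + b\bar v + c$ has $L^2$-norm $O(\epsilon\,\alpha_*(\tau))$, and now $(1-\cL)^r e^{(\tau-\tau')\cL}$ acting on it is integrable in $\tau'$ for all $r<1$; iterating once more (using $\|(1-\cL)(a\bar v_\sigma)\|$-type bounds together with the pointwise bound $0 \le -u_{\sigma\sigma} \le 1/u$ from Lemma \ref{lemma-bound-Q}, which gives uniform control of $\bar v_{\sigma\sigma}$ on $|\sigma|\le 2\delta^{-\theta}$, and the coefficient-derivative estimates $|a_\sigma|\le C$, $|b_\sigma|\le C\epsilon$) pushes $r$ up to any value $<3/2$.

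For the ``in particular'' clause: once $\|(1-\cL)^r\bar v\| = O(\alpha_*)$ for $r<3/2$, I split $\bar v = \bar v_0 + \bar v_{\pm}$. On $\cH_0 = \mathrm{span}\{h_2\}$ the operator $\cL$ acts as $0$, so $(1-\cL)$ is the identity there and $\|(1-\cL)^r\bar v_0\| = \|\bar v_0\| = \Gamma_0$-sized, $= O(\alpha_*)$, which carries no extra decay. But $\|(1-\cL)^r \bar v_{\pm}\| = O(\alpha_*)$ together with $\Gamma_{\pm}(\tau) = \|\bar v_{\pm}(\tau)\| = o(\alpha_*(\tau))$ from Proposition \ref{lemma-prevails-v}: interpolating between $\|\bar v_{\pm}\| = o(\alpha_*)$ and $\|(1-\cL)^{3/2-\varepsilon}\bar v_{\pm}\| = O(\alpha_*)$ gives $\|(1-\cL)^r\bar v_{\pm}\| = o(\alpha_*)$ for every $r < 3/2-\varepsilon$ — wait, the interpolation gives a power of the ratio, so $\|(1-\cL)^r\bar v_\pm\| \le \|\bar v_\pm\|^{1-\theta_r}\|(1-\cL)^{3/2-\varepsilon}\bar v_\pm\|^{\theta_r} = o(\alpha_*)\cdot O(\alpha_*)^{\theta_r}/\alpha_*^{\;?}$... more carefully: $= \big(o(\alpha_*)\big)^{1-\theta_r}\big(O(\alpha_*)\big)^{\theta_r} = o(\alpha_*)$ since $o(\alpha_*)^{1-\theta_r} = o(\alpha_*^{1-\theta_r})$ and the product is $o(\alpha_*)$. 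Taking $r = 1/2$ and $r=1$ and using \eqref{eq-derivatives-from-L} then gives $\|\bar v_{\pm,\sigma}\| = \|\sqrt{1-\cL}\,\bar v_\pm\| = o(\alpha_*)$ and, from the second identity in \eqref{eq-derivatives-from-L}, $\|\bar v_{\pm,\sigma\sigma}\|^2 \le \|(1-\cL)\bar v_\pm\|^2 = o(\alpha_*^2)$.

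\textbf{Main obstacle.} The delicate point is the bootstrap across $r=1$: naively each derivative applied through the Duhamel integral costs a non-integrable factor $(\tau-\tau')^{-1}$. Getting past it requires carefully redistributing derivatives between the semigroup smoothing and the already-established bound on $\bar v_\sigma$ (Step 1), and controlling $\bar v_{\sigma\sigma}$ a priori via the curvature-type inequality $0\le -u_{\sigma\sigma}\le 1/u$ of Lemma \ref{lemma-bound-Q} rather than through the semigroup. Keeping track of where the cutoff-derivative terms $c, c_\sigma$ are supported (the annulus $\tfrac12\delta^{-\theta}\le\sigma\le\delta^{-\theta}$), so that their $L^2$-norms are genuinely superexponentially small in $\delta^{-1}$ and hence $o(\alpha_*)$, is the other bookkeeping subtlety.
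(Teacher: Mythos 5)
Your Step 1 and your final interpolation step are exactly the paper's argument: the bound $\sup_{\tau'\le\tau}\|\bar v_\sigma(\tau')\|\le C\alpha_*(\tau)$ is obtained there from \eqref{eq-VOC}, the smoothing estimate \eqref{eq-semigroup-smoothing}, and absorption of the $C\epsilon\sup_{\tau'\le\tau}\|\bar v_\sigma(\tau')\|$ term for $\epsilon$ small, and the $o(\alpha_*)$ statement for $\bar v_\pm$ comes from interpolating $\|\bar v_\pm\|=o(\alpha_*)$ (Proposition \ref{lemma-prevails-v}) against the $(1-\cL)^r$ bound, together with \eqref{eq-derivatives-from-L}. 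The genuine gap is in your Step 2, where you cross $r=1$. To make the Duhamel term with exponent $r\ge 1$ integrable you must place at least half a derivative on the inhomogeneity $a\bar v_\sigma+b\bar v+c$, whose worst piece is $a\bar v_{\sigma\sigma}$, and you propose to control $\bar v_{\sigma\sigma}$ ``a priori'' by the inequality $0\le -u_{\sigma\sigma}\le 1/u$ of Lemma \ref{lemma-bound-Q} rather than through the semigroup. That inequality only gives a uniform pointwise bound $|\bar v_{\sigma\sigma}|\le C$ on $|\sigma|\le 2\delta^{-\theta}$, with no decay in $\tau$; feeding $\|a\bar v_{\sigma\sigma}\|\le C\epsilon$ into the Duhamel integral gives at best $\|(1-\cL)^r\bar v(\tau)\|\le C\alpha_*(\tau)+C\epsilon$, and since $\alpha_*(\tau)\to 0$ the fixed term $C\epsilon$ ruins the conclusion. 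What you actually need is that the half-derivative of the inhomogeneity is itself $O(\alpha_*(\tau))$, i.e. precisely the $\|\bar v_{\sigma\sigma}\|=O(\alpha_*)$ bound you are trying to prove; in the paper the pointwise bound from Lemma \ref{lemma-bound-Q} is used only to show that the coefficient $a_\sigma$ (which contains $u_{\sigma\sigma}/u$ and $v_{\sigma\sigma}$) is uniformly bounded, never to estimate the forcing term.

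The repair is to run your own Step-1 absorption scheme one derivative higher, which is what the paper does: differentiate \eqref{eq-vbar-linear-inhomog} in $\sigma$ to get an equation for $w=\bar v_\sigma$, namely $w_\tau-\cL w=(a_\sigma-\tfrac12+b)w+aw_\sigma+b_\sigma\bar v+c_\sigma$, apply the variation of constants formula, and estimate $\|(1-\cL)^r w\|$ for $\tfrac12<r<1$. The dangerous term $aw_\sigma$ is handled by $\|aw_\sigma(\tau')\|\le C\epsilon\|w_\sigma(\tau')\|\le C\epsilon\|(1-\cL)^r w(\tau')\|$ (possible because $r>\tfrac12$) and absorbed by choosing $\epsilon$ small, exactly as in your Step 1; the ingredients $|a_\sigma|\le C$, $|b_\sigma|\le C\epsilon$ and the superexponential smallness of $\|c_\sigma\|$, which you already listed, are what make the remaining terms $O(\alpha_*(\tau))$. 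This gives $\sup_{\tau'\le\tau}\|(1-\cL)^r\bar v_\sigma(\tau')\|\le C\alpha_*(\tau)$ for all $r<1$, hence $\|(1-\cL)^{r'}\bar v\|=O(\alpha_*)$ for all $r'<\tfrac32$ and in particular $\|\bar v_{\sigma\sigma}\|=O(\alpha_*)$, after which your interpolation paragraph goes through unchanged. In short: the control of $\bar v_{\sigma\sigma}$ must come through the semigroup and the $\epsilon$-absorption, not from the a priori curvature bound.
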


At this point we can estimate the first derivative of $\bar v(\tau)$.  After that, we can bootstrap and estimate $\|(1-\cL)^r\pd_\sigma \bar v(\tau)\|$.
\begin{lemma}
  There is a constant $C$ such that for all $\tau\ll0$ one has
  \[
  \sup_{\tau'\leq\tau}\|\bar v_\sigma(\tau')\| \leq C \alpha_*(\tau).
  \]
\end{lemma}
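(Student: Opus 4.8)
The plan is to use the variation-of-constants representation \eqref{eq-VOC} for $\bar v$, together with the smoothing estimate \eqref{eq-semigroup-smoothing} for the semigroup $e^{t\cL}$ and the smallness of the coefficients $a,b$ in \eqref{eq-vbar-linear-inhomog}, and then to absorb the resulting $\|\bar v_\sigma\|$-term. By the identity $\|f_\sigma\| = \|(1-\cL)^{1/2}f\|$ from \eqref{eq-derivatives-from-L}, the claim is equivalent to a bound on $\sup_{\tau'\le\tau}\|(1-\cL)^{1/2}\bar v(\tau')\|$.

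First I would apply $(1-\cL)^{1/2}$ to \eqref{eq-VOC}. The first term $(1-\cL)^{1/2}e^{\cL}\bar v(\tau-1)$ is bounded by $C\|\bar v(\tau-1)\|\le C\Gamma(\tau)$, and Proposition \ref{lemma-prevails-v} together with $\Gamma_0(\tau)=\alpha_*(\tau)\|h_2\|$ gives $\Gamma(\tau)\le C\alpha_*(\tau)$, so this term is $O(\alpha_*(\tau))$. For the Duhamel integral I would invoke \eqref{eq-semigroup-smoothing} in the form $\|(1-\cL)^{1/2}e^{t\cL}\|\le Ct^{-1/2}e^t$, which is integrable on $(0,1]$, and the coefficient estimates $|a|+|b|\le C\epsilon$ for $\tau\le\tau_0(\epsilon)$ on the support $\{|\sigma|\le\delta(\tau)^{-\theta}\}$ of $\bar v$, together with $\|c\|\le Ce^{-\delta(\tau)^{-2\theta}/64}$. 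Since $\|b\bar v\|\le C\epsilon\|\bar v\|\le C\epsilon\,\alpha_*(\tau)$ and $e^{-\delta^{-2\theta}/64}\le C\delta^5\le C\delta\,\Gamma\le C\delta\,\alpha_*$ by \eqref{eq-key-simon}, the only term not manifestly $O(\alpha_*)$ is $\|a\bar v_\sigma\|\le C\epsilon\|\bar v_\sigma\|$. This gives, for $\tau\le\tau_0(\epsilon)$,
\[
\|\bar v_\sigma(\tau)\|\le C\,\alpha_*(\tau)+C\epsilon\int_{\tau-1}^{\tau}(\tau-\tau')^{-1/2}e^{\tau-\tau'}\,\|\bar v_\sigma(\tau')\|\,d\tau'.
\]

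Next I would set $\Phi(\tau):=\sup_{\tau'\le\tau}\|\bar v_\sigma(\tau')\|$, which is finite (indeed $\to0$) since $|\bar v_\sigma|=|v_\sigma\chi+v\chi_\sigma|\le C\delta(\tau')^{1/8}$ on $\supp\chi$ by Lemma \ref{lemma-simon-crucial} and $|\chi_\sigma|\le C\delta^\theta$. Replacing $\tau$ by any $\tau''\le\tau$ in the displayed inequality, bounding the integral by $C_1\Phi(\tau)$ with $C_1$ independent of $\epsilon$, and using monotonicity of $\alpha_*$ and $\Phi$, I obtain $\Phi(\tau)\le C\alpha_*(\tau)+C\epsilon C_1\Phi(\tau)$; choosing $\epsilon$ with $C\epsilon C_1\le\tfrac12$ and absorbing yields $\Phi(\tau)\le 2C\,\alpha_*(\tau)$ for all $\tau\le\tau_0(\epsilon)$, which is the assertion.

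The main obstacle is this self-coupling in the Duhamel term, which prevents the estimate from being an immediate consequence of the smoothing bound alone; it is overcome by shrinking the feedback coefficient (possible only for $\tau$ sufficiently negative, where $|a|\le C\epsilon$ and $\delta(\tau)$ is small) and then absorbing. The auxiliary inputs that make this work are the a priori finiteness of $\Phi(\tau)$ via Lemma \ref{lemma-simon-crucial}, the bound $\Gamma\le C\alpha_*$ from Proposition \ref{lemma-prevails-v}, and the interpolation inequality $\delta^4\le C\Gamma$ from \eqref{eq-key-simon}, which disposes of the inhomogeneous term $c$.
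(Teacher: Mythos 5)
Your argument is correct and is essentially the paper's own proof: apply the variation-of-constants formula \eqref{eq-VOC}, bound the homogeneous term by $C\|\bar v(\tau-1)\|\le C\alpha_*(\tau)$, use the semigroup smoothing estimate with exponent $\tfrac12$ together with the coefficient bounds on $a,b,c$, and absorb the $C\epsilon\sup_{\tau'\le\tau}\|\bar v_\sigma(\tau')\|$ feedback term by taking $\epsilon$ small and $\tau\ll0$. The only (harmless) deviation is cosmetic: you dispose of the exponentially small $c$-contribution via $e^{-\delta^{-2\theta}/64}\le C\delta^{5}\le C\delta\,\alpha_*(\tau)$ using \eqref{eq-key-simon} and Proposition \ref{lemma-prevails-v}, whereas the paper bounds it by $C_m\delta(\tau)^m\sup_{\tau'\le\tau}\|\bar v_\sigma(\tau')\|$ and absorbs it, and you make explicit the a priori finiteness of $\sup_{\tau'\le\tau}\|\bar v_\sigma(\tau')\|$ needed for the absorption, which the paper leaves implicit.
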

\begin{proof}
  
  We consider the two terms in \eqref{eq-VOC}.  The first term satisfies
  \[
  \left\|\pd_\sigma e^{\cL}\bar v(\tau-1)\right\| \leq C\|\bar v(\tau-1)\| \leq C\alpha_*(\tau).
  \]
  Thus \eqref{eq-VOC} implies
  \begin{align*}
    \|\bar v_\sigma(\tau)\|
    &\leq C\alpha_*(\tau) + C \epsilon
    \int_{\tau-1}^\tau
    \frac{\|\bar v_\sigma(\tau')\| + \|\bar v(\tau')\| }{\sqrt{\tau-\tau'}} d\tau'
    + C\int_{\tau-1}^\tau
    \frac{e^{-\frac{1}{64}\delta(\tau')^{-2\theta}}}{\sqrt{\tau-\tau'}} d\tau' \\
    &\leq C\alpha_*(\tau) +Ce^{-\frac{1}{64}\delta(\tau)^{-2\theta}}
    + C\epsilon \sup_{\tau'\leq \tau} \|\bar v_\sigma(\tau')\|
  \end{align*}
  If we choose $\epsilon>0$ so small that $C\epsilon\leq \frac12$ then, after taking the supremum over $\tau$ and using the fact that $\delta(\tau)$ is nondecreasing, we get
  \[
  \sup_{\tau'\leq \tau}\|\pd_\sigma \bar v(\tau')\| \leq C\alpha_*(\tau) + C e^{-\frac{1}{64}\delta(\tau)^{-2\theta}}.
  \]
  Finally, we use that for all sufficiently large $\tau$ Lemma \ref{lemma-simon-crucial} implies that
  \[
  e^{-\frac{1}{64}\delta(\tau)^{-2\theta}} \leq C_m\delta(\tau)^m\sup_{\tau'\leq \tau} \|\pd_\sigma\bar v(\tau')\|
  \]
  so that for $\tau\ll0$ we get
  \[
  \sup_{\tau '\leq\tau}\|\pd_\sigma\bar v(\tau')\| \leq C \alpha_*(\tau),
  \]
  as claimed.
\end{proof}

\begin{lemma}
  There is a constant $C$ such that for all $\tau\ll0$ one has for any $r<1$
  \[
  \sup_{\tau'\leq\tau} \|(1-\cL)^r\bar v_\sigma(\tau')\| \leq C\alpha_*(\tau).
  \]
  In particular, setting $r=\frac12$ leads to
  \[
  \sup_{\tau'\leq\tau}\|\bar v_{\sigma\sigma}(\tau')\| \leq C \alpha_*(\tau).
  \]
  
\end{lemma}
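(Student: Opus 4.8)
The statement is a bootstrap/regularity estimate, so I would differentiate the variation of constants formula \eqref{eq-VOC} in $\sigma$, apply $(1-\cL)^r$, and set up an absorption (fixed point) inequality for
\[
\Phi(\tau):=\sup_{\tau'\le\tau}\bigl\|(1-\cL)^r\bar v_\sigma(\tau')\bigr\|,
\]
exploiting the smallness of the coefficients $a,b$ proved above and the smoothing estimate \eqref{eq-semigroup-smoothing}. I would first fix $r\in[\tfrac12,1)$; the range $r<\tfrac12$ then comes for free by monotonicity. The relevant observation is that, by reflection symmetry, $\bar v$ is even, so $\bar v_\sigma$ is odd, hence orthogonal to $h_0$ and spectrally supported on the odd Hermite modes, where $1-\cL$ has eigenvalues $1,3,5,\dots\ge 1$. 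Consequently $\|(1-\cL)^{r'}\bar v_\sigma\|\le\|(1-\cL)^{r}\bar v_\sigma\|$ whenever $r'\le r$, and in particular, using \eqref{eq-derivatives-from-L}, $\|\bar v_{\sigma\sigma}\|=\|(1-\cL)^{1/2}\bar v_\sigma\|\le\|(1-\cL)^{r}\bar v_\sigma\|$ for $r\ge\tfrac12$. This is what will let me absorb the second-order term.

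Next I would record the commutation $\pd_\sigma\cL=(\cL-\tfrac12)\pd_\sigma$, immediate from \eqref{eqn-oper}, which gives $\pd_\sigma e^{t\cL}=e^{-t/2}e^{t\cL}\pd_\sigma$. Differentiating \eqref{eq-VOC} in $\sigma$ and then applying $(1-\cL)^r$ (which commutes with $e^{t\cL}$) yields
\begin{equation*}
(1-\cL)^r\bar v_\sigma(\tau)
= e^{-1/2}(1-\cL)^r e^{\cL}\bar v_\sigma(\tau-1)
+\int_{\tau-1}^\tau e^{-(\tau-\tau')/2}(1-\cL)^r e^{(\tau-\tau')\cL}\,g_\sigma(\tau')\,d\tau',
\end{equation*}
where $g:=a\bar v_\sigma+b\bar v+c$ and $g_\sigma=a_\sigma\bar v_\sigma+a\bar v_{\sigma\sigma}+b_\sigma\bar v+b\bar v_\sigma+c_\sigma$. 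For the first term, \eqref{eq-semigroup-smoothing} with $t=1$ together with the previous lemma bounds it by $C\alpha_*(\tau)$. For the integrand, \eqref{eq-semigroup-smoothing} gives $\|(1-\cL)^r e^{(\tau-\tau')\cL}g_\sigma\|\le C(\tau-\tau')^{-r}e^{\tau-\tau'}\|g_\sigma\|$, and combined with the gain $e^{-(\tau-\tau')/2}$ the factor $(\tau-\tau')^{-r}$ is integrable on $[\tau-1,\tau]$ precisely because $r<1$.

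It then remains to estimate $\|g_\sigma(\tau')\|$. Using the coefficient bounds proved above, namely $|a|,|b|\le C\epsilon$, $|a_\sigma|\le C$, $|b_\sigma|\le C\epsilon$, and $\|c_\sigma(\cdot,\tau)\|\le Ce^{-\frac1{64}\delta(\tau)^{-2\theta}}$, together with $\sup_{\tau'\le\tau}\|\bar v_\sigma(\tau')\|\le C\alpha_*(\tau)$ and $\sup_{\tau'\le\tau}\|\bar v(\tau')\|\le C\alpha_*(\tau)$ from the previous lemmas, and noting that \eqref{eq-key-simon} and Proposition \ref{lemma-prevails-v} force $\delta(\tau)\le C\alpha_*(\tau)^{1/4}$ so that $\|c_\sigma(\cdot,\tau')\|\le C\alpha_*(\tau)$ for $\tau\ll 0$, I get
\[
\|g_\sigma(\tau')\|\le C\alpha_*(\tau)+C\epsilon\,\|\bar v_{\sigma\sigma}(\tau')\|\le C\alpha_*(\tau)+C\epsilon\,\|(1-\cL)^r\bar v_\sigma(\tau')\|
\]
for $r\ge\tfrac12$, where the middle step uses the subspace comparison from the first paragraph. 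Plugging this in and taking the supremum over $\tau'\le\tau$ gives $\Phi(\tau)\le C\alpha_*(\tau)+\tfrac{C\epsilon}{1-r}\Phi(\tau)$. Since $\Phi(\tau)$ is finite (parabolic regularity together with the compact support of $\chi(\cdot,\tau)$) and $r<1$ is fixed, choosing $\epsilon$, hence $\tau_0$, small enough that $\tfrac{C\epsilon}{1-r}\le\tfrac12$ lets me absorb the last term and conclude $\Phi(\tau)\le 2C\alpha_*(\tau)$ for $\tau\le\tau_0$. The case $r<\tfrac12$ follows from the monotonicity $\|(1-\cL)^{r}\bar v_\sigma\|\le\|(1-\cL)^{1/2}\bar v_\sigma\|$, and $r=\tfrac12$ gives $\|\bar v_{\sigma\sigma}(\tau')\|=\|(1-\cL)^{1/2}\bar v_\sigma(\tau')\|\le C\alpha_*(\tau)$. \textbf{Main obstacle.} The delicate point is the borderline integrability as $r\uparrow 1$: one must first commute $\pd_\sigma$ through the semigroup to extract the extra decay $e^{-(\tau-\tau')/2}$ before invoking \eqref{eq-semigroup-smoothing}, and one must handle the genuinely second-order term $a\bar v_{\sigma\sigma}$ by absorbing it on the left using the smallness of $a$, which is why the whole argument has to be phrased as a fixed point estimate for $\Phi(\tau)$ rather than as a one-shot bound.
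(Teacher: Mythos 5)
Your proposal is correct and follows essentially the paper's route: the paper differentiates \eqref{eq-vbar-linear-inhomog} in $\sigma$ (the commutator $\pd_\sigma\cL=(\cL-\tfrac12)\pd_\sigma$ producing the extra $-\tfrac12 w$ term in the equation for $w=\bar v_\sigma$), applies the variation of constants formula together with the smoothing estimate \eqref{eq-semigroup-smoothing} for $\tfrac12<r<1$, bounds $\|w_\sigma\|\le C\|(1-\cL)^r w\|$, and absorbs the small-coefficient terms just as in your fixed-point inequality for $\Phi(\tau)$, so differentiating \eqref{eq-VOC} directly is only a cosmetic variant. The one slip is that on the odd Hermite modes the eigenvalues of $1-\cL$ are $\tfrac12,\tfrac32,\tfrac52,\dots$ rather than $1,3,5,\dots$, so your spectral comparisons such as $\|(1-\cL)^{r'}\bar v_\sigma\|\le\|(1-\cL)^{r}\bar v_\sigma\|$ and $\|\bar v_{\sigma\sigma}\|\le\|(1-\cL)^{r}\bar v_\sigma\|$ hold only up to a factor $2^{\,r-r'}\le\sqrt2$, which is harmless since the lemma only asserts bounds with a constant $C$.
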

\begin{proof}
  We consider $w=\bar v_\sigma$ and obtain an equation for $w$ by differentiating~\eqref{eq-vbar-linear-inhomog} with respect to $\sigma$.  We get
  \[
  w_\tau - \cL w = \left(a_\sigma-\tfrac12+b\right) w + aw_\sigma + b_\sigma \bar v + c_\sigma \ .
  \]
  Once again we can apply the variation of constants formula to get
  \[
  w(\tau)= e^{\cL}w(\tau-1) + \int_{\tau-1}^\tau e^{(\tau-\tau')\cL} \left\{ \left(a_\sigma-\tfrac12+b\right) w + aw_\sigma + b_\sigma \bar v + c_\sigma \right\} d\tau'.
  \]
  We now estimate the norm of $(1-\cL)^rw$.  By choosing $r>\frac12$ we ensure that this norm will be better than $\|w_\sigma\|$, while choosing $r<1$ leads to convergent integrals in the following application of the variation of constants formula.  Thus we get
  \begin{multline*}
    \|(1-\cL)^rw(\tau)\| \leq
    C\|w(\tau-1)\| +\\
    C\int_{\tau'-1}^\tau \frac{\|w(\tau')\|+ \epsilon\|w_\sigma(\tau')\| + \|\bar v(\tau')\| + e^{-\frac{1}{64}\delta(\tau)^{-2\theta}} } {(\tau-\tau')^r} d\tau'\ .
  \end{multline*}
  Estimate $\|w_\sigma(\tau')\|\leq C \|(1-\cL)^rw(\tau')\|$, and then proceed as before to conclude $ \|(1-\cL)^rw(\tau)\| \leq C \alpha_*(\tau) $.
  
\end{proof}

We are ready now to provide the proof of Lemma \ref{lemma-Sigurd}.

\begin{proof}[Proof of Lemma \ref{lemma-Sigurd}]
  At this point we have shown that derivatives up to order almost 3 of $\bar v(\tau)$ are bounded in terms of $\alpha_*(\tau)$.
  To prove Lemma~\ref{lemma-Sigurd} we have to show that $\bar v_{\pm}(\tau)=\bar v_+(\tau)+\bar v_-(\tau)$ and its first and second order derivatives are $o(\alpha_*(\tau))$.
  This follows from an interpolation argument.
  Namely, by Proposition \ref{lemma-prevails-v} we know that $\|\bar v_{\pm}(\tau)\| = o(\alpha_*(\tau))$, and we have shown that $\|(1-\cL)^r\bar v\| = O(\alpha_*(\tau))$ for any $r<\frac32$.
  The interpolation inequality
  \[
  \|(1-\cL)^s \phi\| \leq \|\phi\|^{1-s/r}\|(1-\cL)^r\phi\|^{s/r}
  \qquad (0\leq s\leq r)
  \]
  implies that $\|(1-\cL)^r\bar v(\tau)\| = o(\alpha_*(\tau))$ for $r\in \{\frac12, 1\}$.
  In view of \eqref{eq-derivatives-from-L}, this implies that $\|\pd_\sigma^m\bar v_{\pm}(\tau)\| = o(\alpha_*(\tau))$ for $m \in \{1,2\}$, and therefore also completes the proof of Lemma~\ref{lemma-Sigurd}.
\end{proof}

\subsection{Asymptotics of $\alpha(\tau)$}
It is easy to see that $v(\sigma,\tau)$ satisfies
\[
v_{\tau} =
v_{\sigma\sigma} - \frac{\sigma}{2}\, v_{\sigma} + v - v_{\sigma}^2 - \frac{v^2}{2} + \frac{v v_{\sigma}^2}{1+v} + \frac{v^3}{2(1+v)} - 2 v_{\sigma}\, \int_0^{\sigma} \frac{v_{\sigma}^2d\sigma}{(1+v)^2}.
\]
We also have
\begin{equation}
  \label{eq-bar-v}
  \bar{v}_{\tau} = \bar{v}_{\sigma\sigma} - \frac{\sigma}{2}\bar{v}_{\sigma} + \bar{v} - \bar{v}_{\sigma}^2 -\frac{\bar{v}^2}{2} + E + E_{\chi} + E_{nl}
\end{equation}
holding for almost all $\tau$ (where $\delta'(\tau)$ exits), and
\begin{equation}
  \label{eq-E-bar-v}
  E := \frac{\bar{v}\bar{v}_{\sigma}^2}{1+v} + \frac{\bar{v}^3}{2(1+v)},
\end{equation}
is the error term containing at least quadratic and higher order terms in $\bar{v}$ and its derivatives,
\begin{equation}
  \label{eq-E-chi}
  \begin{split}
    E_{\chi}
    &:= -2v_{\sigma} \chi_\sigma
    - v\chi_{\sigma\sigma}
    + \frac{\sigma}{2} v \chi_\sigma
    - (1- \chi)v_{\sigma}\bar{v}_{\sigma}
    + \chi_\sigma v \bar{v}_{\sigma}
    + \chi_\sigma v_{\sigma}^2 \\
    &- \frac{v\bar{v}(1- \chi)}{2}
    + \frac{\bar{v}(1- \chi)^2v_{\sigma}^2}{1+v}
    + \frac{\bar{v}\chi_\sigma^2v^2}{1+v}
    + \frac{\bar{v} v^2 (1- \chi)^2}{2(1+v)}
    +\frac{\bar{v}^2 v(1- \chi)}{1+v} \\
    &+ \frac{2\bar{v}}{1+v}\, \Big(\bar{v}_{\sigma} v_{\sigma} (1-\chi)
    - v \bar{v}_{\sigma}\chi_\sigma
    - \chi_\sigma (1- \chi) v v_{\sigma} \Big) + v \, \chi_\tau
  \end{split}
\end{equation}
is the error term coming from introducing the cut off function $\chi$ and
\begin{equation}
  \label{eq-E-NL}
  E_{nl} := -2 \chi \, v_{\sigma}
  \int_0^{\sigma} \frac{v_{\sigma}^2d\sigma}{(1+v)^2},
\end{equation}
is the non-local error term.

In order to analyze the behavior of $\alpha(\tau)$ we will multiply equation \eqref{eq-bar-v} by $h_2$ and integrate it over $\R $.  We get
\[
\alpha_{\tau} =
- \int \left(\frac{\bar{v}^2}{2} + \bar{v}_{\sigma}^2\right) h_2\, d\mu
+ \int \left(E + E_{\chi} + E_{nl}\right) h_2\, d\mu.
\]
We claim the first integral in the ODE for $\alpha(\tau)$ above is the leading order term, while the others are of lower order.  We verify this in the following proposition.
\sk

\begin{proposition}\label{prop-error-alpha}
  There exists a $\tau_0 \ll -1$ so that for all $\tau \le \tau_0$ we have
  \[
  \int \left(\frac{\bar{v}^2}{2} + \bar{v}_{\sigma}^2\right) h_2\, d\mu
  = 8\alpha(\tau)^2 \, \|h_2\|^2
  + o\bigl(\alpha_*(\tau)^2\bigr),
  \]
  and
  \[
  \left|\int E\, h_2\, d\mu\right|
  + \left|\int E_{\chi} h_2\, d\mu\right|
  + \left|\int E_{nl} h_2\, d\mu\right|
  = o\bigl(\alpha_*(\tau)^2\bigr),
  \]
  where the right hand sides in both integrals above are evaluated at time $\tau$.
\end{proposition}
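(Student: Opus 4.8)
The plan is to treat the two displayed claims separately, exploiting that by Proposition \ref{lemma-prevails-v} and Lemma \ref{lemma-Sigurd} the function $\bar v$ and all the derivatives we need are dominated by $\alpha_*(\tau)$, with the $\cH_\pm$ components and their derivatives being $o(\alpha_*(\tau))$. For the first claim, I would write $\bar v = \alpha(\tau)\|h_2\|^{-2} h_2 + \bar v_\pm$ — or more simply set $\phi(\tau) := \alpha(\tau)\, h_2 / \|h_2\|^2$ — and expand $\bar v^2$ and $\bar v_\sigma^2$. The cross terms and the $\bar v_\pm$-quadratic terms are $o(\alpha_*^2)$ by Lemma \ref{lemma-Sigurd} together with Cauchy--Schwarz, so the leading contribution is $\alpha(\tau)^2 \|h_2\|^{-4}\int\bigl(\tfrac12 h_2^2 + (h_2')^2\bigr)h_2\, d\mu$. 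Since $h_2(\sigma) = \sigma^2 - 2$ and $h_2' = 2\sigma$, this is a concrete Gaussian moment computation; one checks against the Hermite recursion (using $1-\cL = \partial_\sigma^*\partial_\sigma$ and $\cL h_2 = -h_2$, so that $\int (h_2')^2 h_2 \,d\mu = \langle h_2' , (h_2 h_2)'\rangle$-type manipulations, or just direct integration) that $\int\bigl(\tfrac12 h_2^2 + (h_2')^2\bigr) h_2\, d\mu = 8\|h_2\|^2$, giving the stated coefficient $8\alpha(\tau)^2\|h_2\|^2$. Here I would also need the elementary fact, proved as in \cite{ADS1} or \cite{Br}, that $\alpha(\tau)^2 = \alpha_*(\tau)^2 + o(\alpha_*(\tau)^2)$, i.e. that $\alpha$ does not oscillate — this follows once we know $\alpha(\tau)\to 0$ and will be re-derived in the proof of Theorem \ref{prop-parabolic-asym}; alternatively one simply replaces $\alpha(\tau)^2$ by $\alpha_*(\tau)^2$ up to an $o(\alpha_*^2)$ error in the statement.

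For the second claim I would bound each of the three error integrals. The term $E$ in \eqref{eq-E-bar-v} is at least cubic in $\bar v$ and its first derivative, multiplied by the bounded factor $(1+v)^{-1}$; since $|\bar v| + |\bar v_\sigma| \le C\,\delta(\tau)^{1/8}$ on the support of $\bar v$ by Lemma \ref{lemma-simon-crucial}, and $\|\bar v\| , \|\bar v_\sigma\| \le C\alpha_*(\tau)$, a Cauchy--Schwarz estimate gives $|\int E\, h_2\, d\mu| \le C\,\delta(\tau)^{1/8}\,\alpha_*(\tau)^2 = o(\alpha_*(\tau)^2)$, using also $\delta(\tau)\to 0$. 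The nonlocal term $E_{nl}$ from \eqref{eq-E-NL} is estimated the same way: the inner integral $\int_0^\sigma v_\sigma^2(1+v)^{-2}\,d\sigma'$ is bounded on the relevant range by $C\,\delta(\tau)^{1/8}\,|\sigma|$ (again Lemma \ref{lemma-simon-crucial}), so $|E_{nl}| \le C\,\delta(\tau)^{1/8}\,|\sigma|\,|\chi v_\sigma|$ and pairing against $h_2\,e^{-\sigma^2/4}$ yields $o(\alpha_*(\tau)^2)$ after one more Cauchy--Schwarz in the weighted $L^2$ norm. The cutoff-error term $E_\chi$ from \eqref{eq-E-chi} is handled by noting that every summand contains at least one factor of $\chi_\sigma$, $\chi_{\sigma\sigma}$, $1-\chi$, or $\chi_\tau$, all of which are supported in $\{\tfrac12\delta^{-\theta} \le |\sigma| \le \delta^{-\theta}\}$; there, $e^{-\sigma^2/4} \le e^{-\frac{1}{64}\delta^{-2\theta}}$, so the whole term contributes at most $C\,e^{-\frac{1}{64}\delta(\tau)^{-2\theta}}$ in $\cH$-norm, which by the interpolation bound $e^{-\frac{1}{64}\delta^{-2\theta}} \le C_m\,\delta(\tau)^m$ (Lemma \ref{lemma-simon-crucial}, second estimate, combined with \eqref{eq-key-simon}) is $o(\alpha_*(\tau)^2)$. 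Pairing against $h_2$ costs only the constant $\|h_2\|$.

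I expect the main obstacle to be the bookkeeping in $E_\chi$: it has many terms, and one must check in each that the combination of decay of $v,v_\sigma$ (order $\delta^{1/8}$), the smallness of $\chi$-derivatives (powers of $\delta^\theta$ or $\delta^{\theta-1}\delta'$), and above all the Gaussian weight on the support of the cutoff derivatives conspire to give a bound of the form $C\,e^{-c\,\delta^{-2\theta}}$, which then beats any power of $\alpha_*$. The term involving $\chi_\tau = \theta\,\delta^{\theta-1}\delta'(\tau)\,\sigma\,\hat\chi'(\delta^\theta\sigma)$ is the delicate one, since it carries the factor $\delta'(\tau)$; but $\delta'$ is uniformly bounded by Lemma \ref{prop-delta-Lipschitz}, and $\chi_\tau$ is again supported where $|\sigma|\ge\tfrac12\delta^{-\theta}$, so its $\cH$-norm is $\le C\,\delta(\tau)^{-1/2}\,e^{-\frac{1}{32}\delta^{-2\theta}}$, still super-polynomially small. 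Once all summands of $E_\chi$ are seen to be $O(e^{-c\,\delta(\tau)^{-2\theta}})$ in $\cH$, the proof is complete. The other two error terms are comparatively routine once Lemma \ref{lemma-simon-crucial} is in hand.
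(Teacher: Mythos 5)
Most of your outline coincides with the paper's proof: the split of $\bar v$ into its $h_2$-component plus $\bar v_\pm$, the Hermite identities $h_2^2=h_4+8h_2+8h_0$ and $h_{2\sigma}^2=4h_2+8h_0$ giving the constant $8\|h_2\|^2$, the sup bounds of Lemma \ref{lemma-simon-crucial} for the cubic local term $E$, the exponential smallness on the support of $\chi_\sigma$, $1-\chi$, $\chi_\tau$ for $E_\chi$ (with $\delta'$ bounded), and $\delta(\tau)^4\lesssim \alpha_*(\tau)^2$ to beat any fixed power of $\delta$. Two smaller remarks: since $h_2$ is unbounded, the terms $\int \bar v_\pm^2\,h_2\,d\mu$, $\int(\bar v_\pm)_\sigma^2 h_2\,d\mu$ and $\int E\,h_2\,d\mu$ are not controlled by plain Cauchy--Schwarz against $\|\bar v_\pm\|$; the paper uses the weighted inequality \eqref{eq-old-stuff} together with the derivative bounds of Lemma \ref{lemma-Sigurd}, and you should say this step (your ingredients do suffice). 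Also, with your own normalization $\bar v=\alpha\|h_2\|^{-2}h_2+\bar v_\pm$ the leading term comes out as $8\alpha^2\|h_2\|^{-2}$, not $8\alpha^2\|h_2\|^2$; the paper in effect takes $\bar v_\pm=\bar v-\alpha h_2$, so fix one convention consistently. Your remark that $\alpha^2=\alpha_*^2+o(\alpha_*^2)$ is not needed here, since the statement keeps $\alpha(\tau)^2$ in the main term.

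The genuine gap is in $E_{nl}$. Bounding the inner integral in \eqref{eq-E-NL} by $C\delta^{1/8}|\sigma|$ spends the sup estimate of Lemma \ref{lemma-simon-crucial} on \emph{both} copies of $v_\sigma$ inside the integral, so that only one copy of $v_\sigma$ is left to be measured in the Gaussian norm; your final bound is then at best $C\delta^{1/4}\alpha_*(\tau)$ (even $C\delta^{1/8}\alpha_*(\tau)$ as you wrote it). This is only $o(\alpha_*)$, not $o(\alpha_*^2)$: the only available comparison between $\delta$ and $\alpha_*$ is the interpolation \eqref{eq-simon} (or \eqref{eq-key-simon}), i.e.\ $\alpha_*\gtrsim\delta^2$, and in the extreme case $\alpha_*\sim\delta^2$ one has $\delta^{1/4}\alpha_*\sim\delta^{9/4}\gg\delta^4\sim\alpha_*^2$, so the step fails. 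The paper avoids this by keeping the full cubic structure: by reflection symmetry and concavity ($v_\sigma(0,\tau)=0$ and $v_{\sigma\sigma}\le0$, so $|v_\sigma|$ is nondecreasing on $\sigma\ge0$) the inner integral is bounded pointwise by $C\,|\sigma|\,v_\sigma(\sigma,\tau)^2$, whence $\bigl|\int E_{nl}\,h_2\,d\mu\bigr|\le C\int |v_\sigma|^3(\sigma^3+1)\,d\mu$; the sup bound $|v_\sigma|\le C\delta^{1/8}$ and $|\sigma|\le\delta^{-\theta}$ are then used on \emph{one} factor only, the remaining two going into $\sup_{\tau'\le\tau}\|\bar v_\sigma(\tau')\|^2=O(\alpha_*^2)$ up to exponentially small cutoff corrections, which yields $C\delta^{1/8-3\theta}\alpha_*^2=o(\alpha_*^2)$. (An alternative repair: convert the inner unweighted integral to the $d\mu$-norm at the cost of a factor $e^{\sigma^2/4}$, which the outer Gaussian weight cancels, and then integrate the remaining bounded quantity over $|\sigma|\le\delta^{-\theta}$; either way two powers of $v_\sigma$ must end up in an $L^2(d\mu)$-type quantity of size $O(\alpha_*^2)$.)
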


\begin{proof}
  Let ${\bar v}_\pm := \bv_+ + \bv_- =\bar{v} - \alpha h_2$.  We can write
  \begin{equation*}
    \begin{split}
      \int \left(\frac{\bar{v}^2}{2} + \bar{v}_{\sigma}^2\right) h_2\, d\mu
      =& \alpha(\tau)^2 \int \big ( \frac{h_2^3}2
      + h_{2\sigma}^2 h_2 \big ) \, d\mu \\
      &+ 2\alpha(\tau) \int \big ( h_2^2 \bv_\pm + (\bv_\pm)_{\sigma} h_{2\sigma} h_2\, \big ) \, d\mu \\
      &+\int \big ( \frac{\bv_\pm^2}{2} +
      ( \bv_\pm)_\sigma^2 \big ) \, h_2\, d\mu
    \end{split}
  \end{equation*}
  Using $h_2^2 = h_4 + 8h_2 + 8h_0$ and $h_{2\sigma}^2 = 4h_2 + 8h_0$ we compute
  \[
  \int \left(\frac{h_2^3}{2} + h_{2\sigma}^2h_2\right)\, d\mu = 8\, \|h_2\|^2.
  \]
  To estimate the other terms, we recall Lemma 4.12 in \cite{ADS1} which implies that there exist universal constants $C_1, C_2$ so that if a function $f$ is compactly supported in $\R $, then
  \begin{equation}
    \label{eq-old-stuff}
    \int f^2\sigma^2\, d\mu \le C_1 \int f^2\, d\mu + C_2 \int f_{\sigma}^2\, d\mu.
  \end{equation}
  Using this yields
  \[
  \left|\int \bv_\pm^2 h_2 \, d\mu\right|
  \le C_1 \int \bv_\pm^2\, d\mu
  + C_2 \int(\bv_\pm)_{\sigma}^2\, d\mu.
  \]
  Combining the last estimate with Lemma \ref{lemma-Sigurd} yields
  \[
  \left|\int \bv_\pm^2h_2\, d\mu \right|
  \le C \, \sup_{\tau'\leq \tau} \Big ( \|\bv_\pm(\tau)\|^2
  + \|(\bv_\pm)_{\sigma}(\tau) \|^2 \Big )
  = o(\alpha_*(\tau)^2).
  \]
  Similarly,
  \[
  \left|\int(\bar{v}_\pm )_{\sigma}^2 h_2\, d\mu \right| = o(\alpha_*(\tau)^2).
  \]
  By Cauchy-Schwartz
  \[
  \int \bar{v}_\pm h_2^2\, d\mu = o(\alpha_*(\tau)) \quad \mbox{and} \quad
  \int (\bv_\pm)_{\sigma} h_{2\sigma} h_2\, d\mu = o(\alpha_*(\tau)).
  \]
  By putting everything above together we obtain
  \[
  \int \left(\frac{\bar{v}^2}{2} + \bar{v}_{\sigma}^2\right) h_2\, d\mu
  = 8\alpha(\tau)^2\, \|h_2\|^2 + o(\alpha_*(\tau)^2),
  \]
  as stated in the first part of the Proposition.
  
  To show the second estimate in the statement of the Proposition let us start with $E$ given in \eqref{eq-E-bar-v}.  Recall
  that by Lemma \ref{lemma-simon-crucial}, we have
  \begin{equation}
    \label{eq-simon0}
    |v_{\sigma}(\sigma,\tau)| + |v(\sigma,\tau)| \le C \delta(\tau)^{\frac 18}, \qquad \mbox{for} \,\,\,\, |\sigma| \le \delta(\tau)^{-\theta}.
  \end{equation}
  This estimate and Lemma \ref{lemma-Sigurd} yield the bound
  \begin{equation}\label{eq-error1}
    \big | \int E \, h_2 d\mu \big | \leq C\, \sup |\bv | \, \int \Big ( \bar{v}_{\sigma}^2 + \frac{\bar{v}^2}{2} \Big ) (\sigma^2 +2)\, d\mu
    = o(\alpha_*(\tau)^2).
  \end{equation}
  \sk
  
  Any of the terms in \eqref{eq-E-chi} is supported in $|\sigma| \ge \delta(\tau)^{-\theta}$ and hence integral of any of them is exponentially small.  That is why we treat all terms in \eqref{eq-E-chi} the same way, so we will discuss in details only one of them.  Using that $|v(\sigma,\tau)| \le C \delta(\tau)^{\frac18}$ (Lemma 3.7 in \cite{Br}) and the key observation in \cite{Br} that
  \begin{equation}\label{eq-simon}
    \delta(\tau)^4 \le C \sup_{\tau'\leq \tau}\|v(\tau')\|^2 \le C\alpha_*(\tau)^2
  \end{equation}
  (that follows by standard interpolation inequalities) we have $|v(\sigma,\tau) \delta(\tau)^\theta h_2| \le C$ and for $\tau \le \tau_0 \ll -1$ sufficiently small, we obtain
  \begin{equation}
    \label{eq-error-chi-eg}
    \big | \int v \chi_\tau h_2\, d\mu\big | 
    = \big |\int \sigma v \hat\chi' \delta(\tau)^\theta h_2\, d\mu\big |
    \le C e^{-\frac14\,\delta(\tau)^{-2\theta}} 
    \le C \delta(\tau)^{10} 
    = o(\alpha_*(\tau)^2).
  \end{equation}
  All other estimates are similar, hence concluding
  \begin{equation}
    \label{eq-error2}
    \big |\int E_{\chi} h_2\, d\mu\big | = o(\alpha_*(\tau)^2).
  \end{equation}
  Finally, to treat the term $E_{nl}$, we express $v_{\sigma} = \bar{v}_{\sigma} + (1- \chi) v_{\sigma} - v \chi_\sigma$.  Then, using
  that $\chi_\sigma= \delta(\tau)^\theta \, \hat \chi'$ and also that $v_{\sigma} \le 0$ for $\sigma \ge 0$ and $v_{\sigma\sigma} \le 0$, we obtain
  \begin{align*}
    \big |\int E_{nl} h_2\, d\mu \big | &\le C\, \int |v_{\sigma}|^3 (\sigma^3 + 1)\, d\mu \\
    & \le C\, \int \big (|\bar{v}_{\sigma}| + (1- \chi) |v_{\sigma}| + |v \hat \chi' |\delta(\tau)^\theta\big )^3 (\sigma^3 + 1)\, d\mu.
  \end{align*}
  Once we expand the last integral on the right hand side, note that all integrals of terms that contain either $\hat\chi'$ or $1-\hat\chi$ (since those functions are supported on a set $|\sigma| \ge \delta(\tau)^{-\theta}$) can be estimated by $o(\alpha_*(\tau)^2)$, by the same reasoning as in \eqref{eq-error-chi-eg}.  On the other hand, using \eqref{eq-simon0}, we have
  \[
  \int |\bar{v}_{\sigma}|^3 (\sigma^3 + 1)\, d\mu 
  \le C\delta^{-3\theta + \frac18} \sup_{s\le\tau} \int \bar{v}_{\sigma}^2 \, d\mu 
  = o(\alpha_*(\tau)^2),
  \]
  since from our asymptotics we know $\sup_{\tau'\leq\tau} \|\bar{v}_{\sigma}(\tau')\| = O(\alpha_*(\tau))$.  Hence,
  \begin{equation}
    \label{eq-error3}
    \big |\int E_{nl} h_2\, d\mu\big | = o(\alpha_*(\tau)^2).
  \end{equation}
  Finally, estimates \eqref{eq-error1}, \eqref{eq-error2} and \eqref{eq-error3} conclude the proof of the Proposition.
\end{proof}

\sk
\sk
\subsection{The conclusion of the proof of Theorem \ref{prop-parabolic-asym}} Using the estimates we just obtained,
we will now conclude the proof of our parabolic region asymptotics.

\sk

\begin{proof}[Proof of Theorem \ref{prop-parabolic-asym}] 
  Multiply \eqref{eq-bar-v} by $h_2$ and integrate it over $\R $ with respect to measure $d\mu$.  By Proposition \ref{prop-error-alpha} we get
  \[
  \label{eq-alphadelta}
  \|h_2\|^2 \, \alpha'(\tau) = -8 \, \alpha(\tau)^2 \, \|h_2\|^2 + o(\alpha_*(\tau)^2) + \theta\, \delta'(\tau) \delta(\tau)^{\theta-1} \int v \chi' \sigma\, d\mu.
  \]
  Since \(\delta'(\tau)\geq 0\) is bounded, this implies
  \begin{equation*}
    \alpha'(\tau) 
    \le -8\, \alpha(\tau)^2 + o(\alpha_*(\tau)^2) 
    + C \delta(\tau)^{\theta-1+\frac18} e^{-\frac14 \delta(\tau)^{-2\theta}} 
  \end{equation*}
  Using Lemma \ref{prop-delta-Lipschitz}, Proposition \ref{lemma-prevails-v} and the key observation \eqref{eq-simon} from \cite{Br},
  namely that $\delta(\tau)^4 \le C \, \Gamma(\tau)$, we find that the last term is also $o(\alpha_*(\tau)^2)$. Thus, we conclude that
  \begin{equation}\label{eqn-a100}
    \alpha'(\tau) \le -8\, \alpha(\tau)^2 + o\bigl(\alpha_*(\tau)^2\bigr).
  \end{equation}
  
  We next claim that for $\tau \le \tau_0 \ll -1$, we have
  \begin{equation}\label{eqn-alpha10}
    |\alpha(\tau)| = \alpha_*(\tau), \qquad \mbox{implying that} \quad o\big(\alpha_*(\tau)^2\big) = o(\alpha(\tau)^2).
  \end{equation}
  In order to see that, it is sufficient to show that for $\tau_0\ll -1$ sufficiently negative, {\em $|\alpha(\tau)|$ is monotone increasing for $\tau \le \tau_0$.}
  We argue by contradiction.  If this were not true, since $\lim_{\tau\to -\infty} \alpha(\tau) = 0$, we would be able to find a decreasing sequence of times $\tau_j \to -\infty$ so that each $\tau_j$ is a local maximum for $|\alpha(\tau)|$ and $|\alpha(\tau_j)| = \sup_{\tau'\le\tau_j} |\alpha(\tau')|
  =\alpha_*(\tau_j) >0$.  Since $\alpha(\tau_j) \neq 0$ and $\tau_j$ is local maximum for $|\alpha(\tau_j)|$, we
  must have $\alpha'(\tau_j)=0$, and by \eqref{eqn-a100}
  \[
  8\, \alpha(\tau_j)^2
  = o\bigl(\alpha_*(\tau_j )^2\bigr)
  = o \bigl(\alpha(\tau_j )^2\bigr)
  \qquad
  (j\to\infty),
  \]
  which contradicts $|\alpha(\tau_j) | >0$.  We therefore conclude that \eqref{eqn-alpha10} holds.  
  
  Using what we have just shown, we conclude from \eqref{eqn-a100} that for $\tau \le \tau_0 \ll -1$ we have
  \[
  \alpha'(\tau) \le - 8 \, \alpha(\tau)^2 + o \big (\alpha(\tau)^2 \big ).
  \]
  Integration of this differential inequality yields
  \begin{equation}
    \label{eq-1alpha1}
    \frac{1}{\alpha(\tau)} \le -8 \, |\tau| \, \big (1+o(1) \big ).
  \end{equation}
  Similarly as above, \eqref{eq-alphadelta} also yields
  \[
  \alpha'(\tau) \ge -8\, \alpha(\tau)^2 - o(\alpha_*(\tau)^2)
  \]
  which after integration yields
  \begin{equation}
    \label{eq-1alpha2}
    \frac{1}{\alpha(\tau)} \ge -8 \, |\tau| \, \big (1+o(1) \big ).
  \end{equation}
  Finally, \eqref{eq-1alpha1} and \eqref{eq-1alpha2} imply
  \[
  \alpha(\tau) = -\frac{1}{8|\tau|}\, (1 + o(1)).
  \]
  This concludes the proof of Theorem \ref{prop-parabolic-asym}.
\end{proof}

\section{Intermediate region asymptotics}

\label{section-intermediate}

We will assume in this section that our solution $u(\sigma,\tau) $ of \eqref{eqn-un} satisfies the parabolic region asymptotics \eqref{eqn-u-asym-p}.
Our goal is to derive the asymptotic behavior of $u(\sigma,\tau)$, as $\tau \to -\infty$, in the {\em intermediate region}
\[
\cI_{M,\theta} := \big \{ \, (\sigma, \tau) :\,\, |\sigma| \geq M \quad \mbox{and} \quad u(\sigma,\tau) \geq \ \theta \, \big \}.
\]
To this end, we consider the change of variables $z:=\sigma/\sqrt{|\tau|}$ defining the function
\begin{equation}\label{eqn-dfn-uz}
  \bar u(z,\tau) = u(\sigma, \tau), \qquad z:=\frac{\sigma}{\sqrt{|\tau|}}.
\end{equation}
A direct computation starting from \eqref{eqn-u0} shows that $\bar u(z,\tau)$ satisfies the equation
\begin{equation}\label{eqn-uz}
  \bar u_\tau = \frac 1{|\tau|} \bar u_{zz} - \frac{z}{2} \, \bar u_{z}
  - \frac 1{|\tau|} \bar J(z,\tau) \, {\bar u}_{z} + \frac{{\bar u}_{z}^2}{|\tau| u} - \frac 1{\bar u} + \frac {\bar u}2.
\end{equation}
where
\begin{equation}\label{eqn-J25}
  \bar J(z,\tau):= 2 \, \int_0^z \frac{{\bar u}_{zz}}{u}\, dz.
\end{equation}
Our goal is to prove the following result.

\begin{prop}\label{prop-intermediate} 
  For any $\theta \in (0, \sqrt{2})$, we have
  \begin{equation}\label{eqn-asymz1}
    \lim_{\tau \to -\infty} \bar u(z,\tau) = \sqrt{2 - \frac{z^2}{2}}
  \end{equation}
  uniformly for $u \geq \theta$.  
  It follows that \eqref{eqn-asymz1} holds uniformly on $|z| \leq 2-\eta$,
  for any $\eta >0$ small.  In particular, we have
  \[
  \sigma(\tau) = 2\sqrt{|\tau|} (1 +o(1)), \qquad \mbox{as} \qquad \tau\to -\infty.
  \]
\end{prop}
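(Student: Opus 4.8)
The plan is to pass to the limit in equation \eqref{eqn-uz} as $\tau\to-\infty$: with the $|\tau|^{-1}$--terms removed, \eqref{eqn-uz} degenerates to the first order (transport) equation $\bar u_\tau=-\tfrac z2\bar u_z-\tfrac1{\bar u}+\tfrac{\bar u}2$, whose $\tau$--independent solutions with $\bar u(0)=\sqrt2$ form the one parameter family $U_C:=\sqrt{2+Cz^2}$. The task is then to show that the solution converges (not just along subsequences) to the member $C=-\tfrac12$, and to read off this value from the parabolic region asymptotics \eqref{eqn-u-asym-p}.

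\emph{Step 1: a priori estimates.} First I would record the estimates that make the limit procedure legitimate. Applying Brendle's barrier comparison (Proposition \ref{prop-Br2}) to equation \eqref{eqn-Y}, with the parameter $a$ chosen so that $\tfrac1{200}a^{-2}=\sup_{\tau'\le\tau}|u(0,\tau')-\sqrt2|$, and using that \eqref{eqn-u-asym-p} gives $\sup_{\tau'\le\tau}|u(0,\tau')-\sqrt2|=O(|\tau|^{-1})$, one obtains the gradient bound $Y(u,\tau)=u_\sigma^2\le C|\tau|^{-1}$ on $\{u\ge\theta/2\}$, hence $|\bar u_z|=\sqrt{|\tau|}\,|u_\sigma|\le C$ there, while $\theta/2\le\bar u\le u(0,\tau)\le\sqrt2+o(1)$ and $0\le-u_{\sigma\sigma}\le u^{-1}$ by Lemma \ref{lemma-bound-Q}. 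With these bounds every $|\tau|^{-1}$--term on the right of \eqref{eqn-uz} tends to $0$: the term $\bar u_z^2/(|\tau|\bar u)=u_\sigma^2/u=O(|\tau|^{-1})$; after an integration by parts the nonlocal term equals $J(\sigma,\tau)u_\sigma=2u_\sigma^2/u+2u_\sigma\!\int_0^\sigma u_\sigma^2u^{-2}\,d\sigma'=O(|\tau|^{-1})$ (since $\int_0^\sigma|u_\sigma|\,d\sigma'=u(0,\tau)-u(\sigma,\tau)=O(1)$ and $|u_\sigma|\le C|\tau|^{-1/2}$); and although $|\tau|^{-1}\bar u_{zz}=u_{\sigma\sigma}$ is only bounded, it tends to $0$ in the distributional sense in $z$ because $|\tau|^{-1}\!\int\bar u_{zz}\varphi\,dz=-|\tau|^{-1}\!\int\bar u_z\varphi'\,dz=O(|\tau|^{-1})$.

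\emph{Step 2: the limit equation and the value of $C$.} By Step 1, $\bar u$ is uniformly bounded and uniformly Lipschitz in $(z,\tau)$ on $\{u\ge\theta\}$, so along any sequence $\tau_j\to-\infty$ a subsequence of $\bar u(\cdot,\tau_j)$ converges locally uniformly. It is convenient to work with $w:=\bar u^2-2$, which satisfies $w_\tau=|\tau|^{-1}w_{zz}-\tfrac z2 w_z-|\tau|^{-1}\bar J\,w_z+w$ (the first order and zeroth order part being exactly $\mathcal L$ acting on $u^2-2$, whose reflection--symmetric kernel is spanned by $\sigma^2-2$). Passing to the limit, and using $\int_{\tau-1}^{\tau}w_{\tau'}\,d\tau'=w(\tau)-w(\tau-1)\to0$ to force stationarity, the limit $w_\infty$ solves $\tfrac z2 w_{\infty,z}=w_\infty$, so $w_\infty=Cz^2$ for a constant $C$; concavity $u_{\sigma\sigma}\le0$ forces $C\le0$ and compactness of $(S^3,g(t))$ excludes $C=0$. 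To pin down $C=-\tfrac12$ one matches with the parabolic region: \eqref{eqn-u-asym-p} is precisely the statement $|\tau|\,(u(\sigma,\tau)^2-2)\to 1-\tfrac{\sigma^2}2$ on the inner scale, i.e.\ along characteristics entering the cylindrical region one gets $(\bar u^2-2)/z^2\to-\tfrac12$; since $(\bar u^2-2)/z^2$ is asymptotically constant along characteristics (all of which emanate from $z=0$), this value propagates outward and gives $C=-\tfrac12$. Equivalently, one constructs super/sub--solutions of \eqref{eqn-uz} of the form $\sqrt{2-(\tfrac12\mp\varepsilon)z^2}\,(1+O(|\tau|^{-1}))$ and compares $u$ with them, using \eqref{eqn-u-asym-p} at the inner edge $\sigma=M$ and Brendle's barriers to control the nonlocal term, letting $\varepsilon\to0$.

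\emph{Step 3: conclusion.} I expect the \textbf{main obstacle} to be precisely this inner matching: Brendle's barriers by themselves only confine $C$ to a bounded interval, so the sharp value $-\tfrac12$ must be extracted from the $O(|\tau|^{-1})$ quadratic coefficient in \eqref{eqn-u-asym-p}, which requires either a mild strengthening of \eqref{eqn-u-asym-p} to a slowly growing window $|\sigma|\le L(\tau)$, or a careful trapping/reparametrization argument (analogous to the one of Lemma~5.11 in \cite{ADS1}) in the overlap of the two regions, combined with the non--local difficulty handled as in \cite{Br}. Once $C=-\tfrac12$ is known for every subsequence, the subsequential limit is unique, so $\bar u(z,\tau)\to\sqrt{2-z^2/2}$ uniformly on $\{u\ge\theta\}$. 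Since $\sqrt{2-z^2/2}>\theta$ exactly for $|z|<\sqrt{4-2\theta^2}$ and the lower barrier shows $\{u\ge\theta\}$ exhausts this $z$--interval, letting $\theta\downarrow0$ (so $\sqrt{4-2\theta^2}\uparrow2$) gives uniform convergence on $|z|\le2-\eta$ for every $\eta>0$. Finally $\sigma(\tau)\ge(2-\eta)\sqrt{|\tau|}$ is immediate because $\bar u(z,\tau)>0$ for $|z|\le2-\eta$ and $\tau\ll-1$, while the matching upper bound $\sigma(\tau)\le(2+\eta)\sqrt{|\tau|}$ follows on combining the convergence with the fact that the tip region $\{0<u<\theta\}$ has rescaled $\sigma$--width tending to $0$ as $\theta\downarrow0$ (established via the tip analysis of Section \ref{sec-tip}); hence $\sigma(\tau)=2\sqrt{|\tau|}\,(1+o(1))$.
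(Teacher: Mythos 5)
Your Step 1 is sound and is essentially what the paper extracts from Brendle's barriers, but the heart of the matter --- pinning down the constant $C=-\tfrac12$ in the limit profile $\sqrt{2+Cz^2}$ --- is exactly the step you leave unresolved: in Step 2 you assert that the value ``propagates outward along characteristics'' or could be obtained from super/sub-solutions, and in Step 3 you concede this inner matching is the main obstacle, suggesting it may require strengthening \eqref{eqn-u-asym-p} to a growing window $|\sigma|\le L(\tau)$. That is a genuine gap, and the suggested fix is not what is needed. The paper closes it with the fixed-$M$ asymptotics only, in two one-sided arguments. For the lower bound it first upgrades your rough gradient bound to the sharp one, $Y(u,\tau)\le\frac{1+\delta}{2|\tau|}\,(2u^{-2}-1)$ on $\theta\le u\le u(M_\delta,\tau)$ (Proposition \ref{prop-Y-bound-above}): one compares $Y$ with Brendle's barrier $Y_a$, $a^{-2}=\frac{1+\delta}{2|\tau|}$, and the only place the parabolic expansion enters is in verifying the inner boundary inequality $Y(u(M,\tau'),\tau')\le Y_a(u(M,\tau'))$ via \eqref{eqn-YM1}, which forces the explicit choice $M_\delta=\sqrt{2+4/\delta}$; integrating $-uu_\sigma/\sqrt{2-u^2}\le\sqrt{(1+\delta)/(2|\tau|)}$ in $\sigma$ from $M$ outward, with the initial value $\sqrt{2-u^2(M,\tau)}$ again supplied by \eqref{eqn-u13}, gives $\bar u\ge\sqrt{2-(1+\delta)z^2/2}$. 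For the upper bound one does not pass to a limit equation at all (note $|\tau|^{-1}\bar u_{zz}=u_{\sigma\sigma}$ is a priori only bounded, so your distributional limit would need a weak/viscosity formulation and a comparison theory for the degenerate limit equation): concavity lets one simply drop the diffusion term and the signed nonlocal term, and the resulting first-order inequality \eqref{eqn-buz-25} is integrated along characteristics back to the parabolic edge $z_1=M/\sqrt{|\tau_1|}$, where \eqref{eqn-u13} gives $\bar u^2-2=-\frac{M^2-2}{2|\tau_1|}(1+o(1))$; this yields $\limsup\bar u\le\sqrt{2-\frac{M^2-2}{2M^2}z^2}$ and one lets $M\to\infty$. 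So the matching is done by propagating data from a fixed-$M$ window, with errors quantified in $M$ and $\delta$, not by enlarging the window.

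A secondary point: your derivation of the upper bound $\sigma(\tau)\le(2+\eta)\sqrt{|\tau|}$ appeals to the tip analysis of Section \ref{sec-tip}, but that section (Proposition \ref{prop-J}, Corollary \ref{cor-curv-asymp}) already uses $\sigma(\tau)=2\sqrt{|\tau|}(1+o(1))$ from Proposition \ref{prop-intermediate}, so your argument as stated is circular. No tip analysis is needed: once $\bar u(z,\tau)$ is small at some $z_1$ close to $2$ (from the upper bound in \eqref{eqn-bound-ab}), concavity of $u$ in $\sigma$ (the graph lies below the chord through $(0,u(0,\tau))$ and $(z_1\sqrt{|\tau|},u(z_1\sqrt{|\tau|},\tau))$, with $u(0,\tau)\to\sqrt2$) forces $u$ to vanish before $(2+\eta)\sqrt{|\tau|}$, which together with the immediate lower bound gives the diameter asymptotics.
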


\sk

Recall the change of variable $Y(u,\tau):=u_\sigma^2(\sigma,\tau)$, $u=u(\sigma,\tau)$ which was introduced in Section \ref{sec-equations}.  We begin by observing that our asymptotics \eqref{eqn-u-asym-p} imply the following asymptotics for $Y(u,\tau)$.

\begin{lemma}
  \label{lemma-Y10} 
  For any $M \gg 1$ we have
  \begin{equation}\label{eqn-YM1}
    Y(u(\sigma, \tau), \tau) = \frac 1{2|\tau|} \big (u(\sigma,\tau)^{-2} - 1 \big ) + \frac 1{4|\tau|^2} + o_M \big (\frac 1{|\tau|^2} \big )
  \end{equation}
  on $|\sigma| \leq M$.
\end{lemma}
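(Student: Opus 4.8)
Since $Y(u(\sigma,\tau),\tau)=u_\sigma^2(\sigma,\tau)$, the plan is to promote the parabolic expansion \eqref{eqn-u-asym-p} of Theorem~\ref{prop-parabolic-asym} to a $C^1$ (in fact $C^2$) expansion on the fixed interval $\{|\sigma|\le M\}$, then to square the resulting first--order expansion of $u_\sigma$ and re-express the answer in terms of $u$ by invoking \eqref{eqn-u-asym-p} once more. It is worth pointing out that the barrier bound from Proposition~\ref{prop-Br2} is too crude here: with the admissible choice $a^{-2}\sim|\tau|^{-1}$ it only yields $Y=O(|\tau|^{-1})$, one power of $|\tau|$ short of \eqref{eqn-YM1}, so the derivative estimates from the proof of Theorem~\ref{prop-parabolic-asym} genuinely have to be used.

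\sk

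First I would record the $C^1$ refinement of \eqref{eqn-u-asym-p}. Writing $u=\sqrt2(1+v)$ as in \eqref{eq-v-defined}, for $\tau\le\tau_0(M)$ the cutoff satisfies $\chi(\cdot,\tau)\equiv 1$ on $[-M,M]$ (because $\chi=1$ where $|\sigma|\le\frac12\delta(\tau)^{-\theta}$ and $\delta(\tau)^{-\theta}\to\infty$), so $\bar v=v$ there. From the spectral splitting $\bar v=\alpha(\tau)h_2+\bar v_\pm$ and Lemma~\ref{lemma-Sigurd} one has $\|\bar v_\pm(\tau)\|+\|\bar v_{\pm,\sigma}(\tau)\|+\|\bar v_{\pm,\sigma\sigma}(\tau)\|=o(\alpha_*(\tau))$; since the weight $e^{-\sigma^2/4}$ is bounded above and below on $[-M,M]$, the one--dimensional embedding $H^2\hookrightarrow C^1$ gives $\|\bar v_\pm\|_{C^1([-M,M])}=o(\alpha_*(\tau))$. (Equivalently, interior parabolic regularity applied to \eqref{eq-v-evolution} upgrades the uniform convergence $v\to0$ to $C^1_{\mathrm{loc}}$ convergence with the same rate.) Combined with $\alpha(\tau)=-\tfrac1{8|\tau|}(1+o(1))$, hence $\alpha_*(\tau)\sim\tfrac1{8|\tau|}$, from the conclusion of Theorem~\ref{prop-parabolic-asym}, this gives, uniformly on $|\sigma|\le M$,
\[
v(\sigma,\tau)=\alpha(\tau)(\sigma^2-2)+o(|\tau|^{-1}),\qquad
v_\sigma(\sigma,\tau)=2\alpha(\tau)\,\sigma+o(|\tau|^{-1}).
\]

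\sk

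Next I would square. Since $u_\sigma=\sqrt2\,v_\sigma=2\sqrt2\,\alpha(\tau)\,\sigma+o(|\tau|^{-1})$ and $\alpha(\tau)\sigma=O(|\tau|^{-1})$ on $|\sigma|\le M$, the cross term in $u_\sigma^2$ is $O(|\tau|^{-1})\cdot o(|\tau|^{-1})=o_M(|\tau|^{-2})$, whence
\[
Y(u(\sigma,\tau),\tau)=u_\sigma^2=8\,\alpha(\tau)^2\,\sigma^2+o_M(|\tau|^{-2})=\frac{\sigma^2}{8|\tau|^2}+o_M(|\tau|^{-2}),
\]
using $8\alpha(\tau)^2=\tfrac1{8|\tau|^2}(1+o(1))$ together with $\sigma^2\le M^2$. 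Finally, \eqref{eqn-u-asym-p} rewritten reads $\sigma^2=2+8|\tau|\,\bigl(1-u/\sqrt2\bigr)+o(1)$ on $|\sigma|\le M$; substituting this into the previous display and performing a short Taylor expansion around $u=\sqrt2$ (legitimate because $\sqrt2-u=O(|\tau|^{-1})$ there) rewrites the $|\tau|^{-1}$--order term in the form appearing on the right-hand side of \eqref{eqn-YM1}, which gives \eqref{eqn-YM1}.

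\sk

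The main point to get right---the only part beyond routine bookkeeping---is the $C^1$ refinement in Step~1: one needs $u_\sigma(\sigma,\tau)$ controlled to order $o(|\tau|^{-1})$, not merely $u(\sigma,\tau)$. This is precisely where the derivative estimates of Lemma~\ref{lemma-Sigurd} enter (the fact that the non--dominant modes $\bar v_\pm$ and their first two $\sigma$--derivatives are $o(\alpha_*(\tau))$ in the weighted $L^2$ norm), promoted to pointwise $C^1$ control on the fixed interval through the local embedding. Once $u_\sigma$ is pinned down to that order, the remaining steps are elementary.
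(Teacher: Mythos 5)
Your proof is correct and follows essentially the same route as the paper: differentiate the parabolic expansion \eqref{eqn-u-asym-p} to obtain $u_\sigma^2=\frac{\sigma^2}{8|\tau|^2}\bigl(1+o_M(1)\bigr)$ on $|\sigma|\le M$, then re-express $\sigma^2$ in terms of $u$ via \eqref{eqn-u-asym-p}; your explicit $C^1$ upgrade through Lemma \ref{lemma-Sigurd} and the local Sobolev embedding simply fills in a step the paper asserts without comment. Note that what the computation (yours and the paper's own) actually yields is $\frac{1}{2|\tau|}\bigl(2u^{-2}-1\bigr)+\frac{1}{4|\tau|^2}+o_M\bigl(|\tau|^{-2}\bigr)$, i.e.\ the factor should read $2u(\sigma,\tau)^{-2}-1$ rather than $u(\sigma,\tau)^{-2}-1$ as displayed in \eqref{eqn-YM1}; this is the form used later in Proposition \ref{prop-Y-bound-above}, so the discrepancy is a typo in the statement, not a gap in your argument.
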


\begin{proof} We use the asymptotics \eqref{eqn-u-asym-p} which imply that on $|\sigma| \leq M$,
  \[
  u_\sigma^2 = \frac {\sigma^2}{8\, |\tau|^2} \big ( 1 + o_M (1 ) \big ).
  \]
  On the other hand, using also \eqref{eqn-u-asym-p} we have
  \begin{equation}\label{eqn-u13}
    u^2 = 2\, \big ( 1 - \frac{\sigma^2-2}{4|\tau|} + o_M( \frac 1{|\tau|}) \big )
  \end{equation}
  implying that
  \begin{equation}\label{eqn-u2-3}
    \frac 2{u^2} - 1 = \frac{\sigma^2-2}{4|\tau|} + o_M( \frac 1{|\tau|}).
  \end{equation}
  Thus, at $u=u(\sigma, \tau)$, we have
  \[
  u_\sigma^2 = \frac {\sigma^2}{8\, |\tau|^2} \big ( 1 + o_M (1 ) \big ) =
  \frac 1{2|\tau|} \big ( \frac 2{u^2} - 1 \big ) + \frac 1{4|\tau|^2} + o_M \big (\frac 1{|\tau|^2} \big ).
  \]
  
\end{proof}

\sk

The next result follows by using the barriers constructed by S.  Brendle in \cite{Br} (see Proposition \ref{prop-barriers}) and the exact behavior of $Y(u,\tau)$ at $u(M,\tau)$
shown in \eqref{eqn-YM1}.

\begin{prop}\label{prop-Y-bound-above}Assume that our solution $u(\sigma,\tau)$ satisfies the asymptotics \eqref{eqn-u-asym-p} in the parabolic region $ |\sigma| \leq M$, for all $M \gg 1$, implying by Lemma \ref{lemma-Y10}, that $Y(u,\tau):=u_\sigma^2(\sigma,\tau)$ with $u=u(\sigma,\tau)$ satisfies
  \[
  Y(u,\tau) = \frac 1{2|\tau|} \big ( 2 u^{-2} - 1 \big ) + o \big (\frac 1{|\tau|} \big ) \qquad \mbox{for} \,\, u = u(M, \tau) < \sqrt{2}
  \]
  and all $\tau \leq \tau_0(M) \ll -1$.
  Then, for any $\theta \in (0,1)$ and $\delta >0$, there exists $M_\delta>0$ depending on $\delta$ and $\tau_0 = \tau_0(\delta, \theta) \ll -1$ such that for all $\tau \leq \tau_0 \ll -1$, we have
  \begin{equation}\label{eqn-Y15}
    u_\sigma^2(\sigma,\tau) = Y(u,\tau) \leq \frac {1+\delta}{2|\tau|} \big ( 2u^{-2} - 1 \big ), \qquad \mbox{for} \,\, \, \theta \leq u \leq u(M_\delta, \tau).
  \end{equation}
\end{prop}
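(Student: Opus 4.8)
The plan is to use Brendle's family of supersolutions $Y_a$ from Proposition \ref{prop-barriers} as barriers from above for $Y(u,\tau)$ on the annular region $u\in[\theta, u(M_\delta,\tau)]$, applying the comparison principle backward in time via Propositions \ref{prop-Br1} and \ref{prop-Br2}. The key point is to choose the parameter $a$ in the barrier family as a slowly growing function of $\tau$, namely $a=a(\tau)$ so that $a(\tau)^{-2}\sim \frac{1+\delta/2}{|\tau|}$, and then verify that at the ``entry point'' $u=u(M_\delta,\tau)$ the hypotheses of Proposition \ref{prop-Br2} (or \ref{prop-Br1}) are met for $\tau$ sufficiently negative, so that $Y(u,\tau)\le Y_{a(\tau)}(u)$ throughout. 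The asymptotic expansion \eqref{eqn-asym-Ya} of $Y_a$ near $u=\sqrt{2}$ shows that $Y_a(u)=a^{-2}(2u^{-2}-1)+O(a^{-4})$, which with our choice of $a$ gives exactly the bound $\frac{1+\delta}{2|\tau|}(2u^{-2}-1)$ once the $O(a^{-4})=O(|\tau|^{-2})$ correction is absorbed into the $\delta$ (possibly after shrinking to $u\ge\theta$ and taking $\tau_0$ more negative).

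The steps, in order, are as follows. First I would fix $\delta>0$ and set $M_\delta$ large; from the hypothesis and Lemma \ref{lemma-Y10}, at $u=u(M_\delta,\tau)$ we have $Y(u,\tau)=\frac1{2|\tau|}(2u^{-2}-1)+o(|\tau|^{-1})$, and moreover $u(M_\delta,\tau)\to\sqrt2$ as $\tau\to-\infty$ with $|u(M_\delta,\tau)-\sqrt2|\le C M_\delta^2/|\tau|$. Second, I would choose $a=a(\tau)$ with $a(\tau)^{-2}=\frac{1+\delta/2}{|\tau|}$, so that $a(\tau)\to\infty$; then for $\tau\ll-1$ the quantities $|u(M_\delta,\tau)-\sqrt2|$ and $Y(u(M_\delta,\tau),\tau)$ satisfy the smallness conditions $|u-\sqrt2|\le \frac1{100}a^{-2}$ and $Y\le\frac1{32}a^{-4}$ — wait, this is the delicate comparison of rates, see below. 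Third, with those conditions verified, Proposition \ref{prop-Br1} (applied with $\bar u(\tau):=u(M_\delta,\tau)$, using reflection symmetry to reduce to $\sigma>0$ exactly as in the proof of Proposition \ref{prop-Br2}) gives $Y(u,\tau)\le Y_{a(\tau)}(u)$ for $r_*a^{-1}\le u\le u(M_\delta,\tau)$ and $\tau\le\tau_0$. Fourth, I would invoke the expansion \eqref{eqn-asym-Ya} together with the elementary positivity of $2u^{-2}-1$ on $u<\sqrt2$ to conclude $Y_{a(\tau)}(u)\le\frac{1+\delta}{2|\tau|}(2u^{-2}-1)$ on $\theta\le u\le u(M_\delta,\tau)$, for $\tau_0$ sufficiently negative; since the barrier is defined down to $u=r_*a^{-1}\to0$, the interval $[\theta,u(M_\delta,\tau)]$ is eventually contained in its domain.

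The main obstacle is the rate-matching in the third step: Proposition \ref{prop-Br1} requires $Y(\bar u(\tau),\tau)\le\frac1{32}a^{-4}$, i.e. $\le \frac1{32}\cdot\frac{(1+\delta/2)^2}{|\tau|^2}$, but at $u=u(M_\delta,\tau)<\sqrt2$ we only know $Y\sim\frac1{2|\tau|}(2u^{-2}-1)$, and $2u^{-2}-1$ at $u=u(M_\delta,\tau)$ is of size $\sim\frac{M_\delta^2}{4|\tau|}$ — so $Y(u(M_\delta,\tau),\tau)\sim\frac{M_\delta^2}{8|\tau|^2}$, which is \emph{not} $\le\frac1{32}a^{-4}$ if $M_\delta$ is too large. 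The resolution is that $M_\delta$ must be chosen \emph{first} and then $a(\tau)$ calibrated so that $u(M_\delta,\tau)$ sits in the narrow window $[\sqrt2-\eta,\sqrt2+\eta]$ where \eqref{eqn-asym-Ya} is valid, and so that the constant in the $Y\le\frac1{32}a^{-4}$ condition is satisfied — which forces a relation between $M_\delta$, $a$, and the allowed $\delta$. Concretely, one either applies Proposition \ref{prop-Br1} with $a$ chosen large enough (of order $M_\delta$) at a \emph{fixed entry radius} slightly below $\sqrt2$ rather than at $u(M_\delta,\tau)$ itself, and then propagates inward; or, more cleanly, one uses the full strength of the expansion \eqref{eqn-asym-Ya} — whose next term is $a^{-4}(8u^{-4}+\zeta(u))$ — to check directly that $Y_a$ with $a^{-2}=\frac{1+\delta/2}{|\tau|}$ lies above the known value of $Y$ at $u=u(M_\delta,\tau)$ and below $\frac{1+\delta}{2|\tau|}(2u^{-2}-1)$ on the whole interval, using that the two corrections are both $O(|\tau|^{-2})$ and of controlled sign. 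Once this calibration is pinned down the rest is the routine comparison-principle argument already carried out in the proof of Proposition \ref{prop-Br2}.
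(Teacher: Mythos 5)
Your overall strategy is the same as the paper's (compare $Y$ with Brendle's barrier $Y_{a(\tau)}$ with $a^{-2}\sim \mathrm{const}/|\tau|$ on the region between the tip scale and the parabolic boundary $u=u(M_\delta,\tau)$, then convert the expansion of $Y_a$ into \eqref{eqn-Y15}), and you correctly diagnose that Propositions \ref{prop-Br1}--\ref{prop-Br2} cannot be invoked with entry point $\bar u(\tau)=u(M_\delta,\tau)$: there $Y\approx\frac{M_\delta^2-2}{8|\tau|^2}$ and $|u-\sqrt{2}|\approx\frac{\sqrt{2}(M_\delta^2-2)}{8|\tau|}$, which violate the hypotheses $Y\le\frac{1}{32}a^{-4}$ and $|u-\sqrt{2}|\le\frac{1}{100}a^{-2}$ once $M_\delta$ is large. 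The genuine gap is that your fallback stops exactly where the proposition's real content lies. The needed comparison is \emph{not} ``already carried out in the proof of Proposition \ref{prop-Br2}'': that proof contains no comparison argument at all, it merely invokes Proposition \ref{prop-Br1}, which you have just discarded. One must run the maximum principle directly on $Q_\tau=\bigcup_{\bar\tau\le\tau'\le\tau}\big[r^*a^{-1},u(M_\delta,\tau')\big]\times\{\tau'\}$ and verify three boundary orderings: (i) on the moving outer boundary $u=u(M_\delta,\tau')$, combining Lemma \ref{lemma-Y10} with the refined lower bound $Y_a\ge a^{-2}(2u^{-2}-1)+\frac{1}{100}a^{-4}$ and $|\tau'|\ge|\tau|$, the inequality $Y\le Y_a$ reduces to $M^2+2\le(M^2-2)(1+\delta)$, which forces the specific calibration $M_\delta=\sqrt{2+4/\delta}$ --- this computation is the heart of the proof and is precisely what you leave as ``a relation is forced''; (ii) on the inner boundary $u=r^*a^{-1}$ one uses $Y\le 1$ together with $Y_a(r^*a^{-1})=2+\beta_a(r^*)\ge 1$ from Brendle's construction; (iii) at an initial slice $\bar\tau\ll\tau$ one needs $Y(\cdot,\bar\tau)\le Y_a$, which the paper obtains from Brendle's Lemma 2.7, i.e. $\liminf_{\bar\tau\to-\infty}\sup_{u\ge r^*a^{-1}}Y(u,\bar\tau)=0$, together with $\inf_u Y_a>0$. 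Items (ii) and (iii) do not appear in your proposal at all, and (i) is only gestured at.

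Two further corrections. Your normalization $a(\tau)^{-2}=\frac{1+\delta/2}{|\tau|}$ is off by a factor of $2$: with the paper's scaling it would only give $Y\le\frac{2+\delta}{2|\tau|}(2u^{-2}-1)$, which is weaker than \eqref{eqn-Y15}; the correct choice is $a^{-2}=\frac{1+\delta}{2|\tau|}$ (as in the paper). Also, \eqref{eqn-asym-Ya} is stated only for $u\in[\sqrt{2}-\eta,\sqrt{2}+\eta]$, so it cannot by itself bound $Y_a$ down to $u=\theta$; one must return to the definition $Y_a(u)=\varphi\big(au/\sqrt{2}\big)-a^{-2}+a^{-4}\zeta(u)$ with $\varphi(s)=s^{-2}+O(s^{-4})$ to get $Y_a\le a^{-2}(2u^{-2}-1)+C_\theta a^{-4}$ on $[\theta,\sqrt{2}]$. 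Finally, absorbing the $O(a^{-4})$ correction into $\delta\,\frac{2u^{-2}-1}{2|\tau|}$ near the endpoint $u=u(M_\delta,\tau)$, where $2u^{-2}-1\approx\frac{1}{\delta|\tau|}$, again relies on the same $M_\delta$--$\delta$ relation; neither restricting to $u\ge\theta$ nor taking $\tau_0$ more negative helps there, since both sides scale identically in $|\tau|$.
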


\smallskip

\begin{proof}[Proof of Proposition \ref{prop-Y-bound-above}] Fix any small numbers $\delta >0$ and $\theta >0$ and for
  $\tau \leq \tau_0 = \tau_0(\delta, \theta)$ to be determined later, consider the barrier $Y_a (u)$
  given in Proposition \ref{prop-barriers} with $a=a(\tau)$ satisfying
  ${\ds a^{-2}= \frac{1+\delta}{2|\tau|}.}$ For $M =M(\delta) \gg 1$ to be determined later set $\lambda(\tau) = u(M,\tau)$.  Our result will follow by
  comparing $Y(u,\tau)$ with $Y_a(u)$ using the maximum principle on the set
  \begin{equation}\label{eqn-Qtau}
    Q_{\tau} := \bigcup_{{\large \bar \tau \leq \tau' \leq \tau } } \Big ( \big [ a^{-1} r^*, \lambda(\tau) \big ] \times \{ \tau' \} \Big ),
  \end{equation}
  for $\bar \tau \ll \tau \leq \tau_0$.  Here $r^*$ is the universal constant as in Proposition \ref{prop-barriers}.
  
  Lets start by showing that there exists $\tau_0 = \tau_0(\delta,\theta) \ll -1$ such that for all $\tau \leq \tau_0$, we have
  \[
  Y(\lambda(\tau'), \tau') \leq Y_a ( \lambda(\tau') ), \qquad \,\,\, \mbox{for}\,\, \tau' \leq \tau.
  \]
  To this end we will combine \eqref{eqn-YM1} with \eqref{eqn-asym-Ya} from which we get the lower
  bound
  \[
  Y_a(u) \geq a^{-2} ( 2 u^{-2} -1 ) + \frac 1{100} \, a^{-4}
  \]
  (see Proposition 2.9 in \cite{Br} for this bound).
  Using this inequality we find that for $u=u(M,\tau')=\lambda(\tau')$ and $a^2=2 |\tau|/(1+\delta)$, we have
  \[
  Y_a( \lambda(\tau')) \geq \frac {1+\delta}{2 |\tau|} \, ( \frac 2{\lambda(\tau')^2} -1 )
  \]
  and on the other hand by \eqref{eqn-YM1},
  \[
  Y(\lambda(\tau'),\tau') = Y(u(M, \tau'), \tau') = \frac 1{2|\tau'|} \big ( \frac 2{\lambda^2(\tau')} - 1 \big ) + \frac 1{4|\tau'|^2} + o_M(\frac 1{|\tau'|}).
  \]
  Hence, in order to guarantee that $Y(\lambda(\tau'), \tau') \leq Y_a ( \lambda(\tau') )$, it is sufficient have
  \[
  \frac 1{2|\tau'|} \big ( \frac 2{\lambda^2(\tau')} - 1 \big ) + \frac 1{2|\tau'|^2} \leq \frac {1+\delta}{2 |\tau|} \, ( \frac 2{\lambda^2(\tau')} -1 )
  \]
  for all $\tau' \leq \tau$.
  But since $\lambda(\tau')=u(M,\tau')$, \eqref{eqn-u2-3} gives
  \[
  \frac 2{\lambda^2(\tau')} - 1 = \frac{M^2-2}{4|\tau'|} + o_M( \frac 1{|\tau'|}).
  \]
  Using the above and that $|\tau'| > |\tau|$, we conclude that it is sufficient to have
  \[
  \frac {M^2-2}{ 4 |\tau'|^2} + \frac 1{2|\tau'|^2} + o_M(\frac 1{|\tau'|^2}) \leq \frac{M^2-2}{4 |\tau'|^2 } (1+\delta) + o_M ( \frac 1{ |\tau'|^2 }),
  \qquad \forall \tau'\leq \tau\leq \tau_0
  \]
  which is guaranteed, after we absorb lower order terms, if
  \[
  \frac{M^2-2}{ 4 |\tau'|^2} + \frac 1{|\tau'|^2} \leq \frac{M^2-2}{4 |\tau'|^2} (1+\delta),
  \]
  or equivalently,
  $M^2+2 \leq (M^2-2) \, (1+\delta) $.  Hence, it is sufficient to choose $M = M(\delta)$ such that
  $M^2+2 = (M^2-2) (1+\delta)$, implying that
  \[
  M = M_\delta := \sqrt{2 + \frac 4{\delta}}.
  \]
  For this choice of $M$ the above inequality is satisfied for all $\tau' \leq \tau \leq \tau_0(\delta,\theta)\ll -1$.
  In all the above calculations we used the estimate $o_M(1) \leq \frac 1{100}$, for $\tau' \leq \tau$.  This is possible with our choice of
  $ M_\delta$, as we can take $\tau_0=\tau_0(\delta)$ to be very negative.
  
  We will now show that for the same choice of $a = a(\tau)$, we have
  \[
  Y(a^{-1} r^*, \tau') \leq Y_a(a^{-1} r^*), \qquad \forall \, \tau' \leq \tau \leq \tau_0.
  \]
  Since our solution $Y$ satisfies $Y(u,\tau) \leq 1$ always, it is sufficient to show that $Y_a(a^{-1} r^*) \geq 1$.  This readily follows from the construction of $Y_a$ in \cite{Br}, where
  $Y_a(a^{-1} r^*, \tau') = 2 + \beta_a(r^*)$ and $1+\beta_a(r^*) \geq 1$ (see in \cite{Br}, Proposition 2.4 for the definition of the function $\beta_a$,
  and in the proof of Proposition 2.8 for the property $1+\beta_a(r^*) \geq 1$).
  
  Finally, lets show that for our choice of $a=a(\tau)$ we can chose $\bar \tau \ll \tau \leq \tau_0 \ll -1$, such that
  \begin{equation}\label{eqn-Y111}
    Y(u, \bar \tau) \leq Y_a(u), \qquad \mbox{for all} \,\, u \in [a^{-1} r^*, \lambda(\bar \tau)].
  \end{equation}
  To this end, we use Lemma 2.7 in \cite{Br} which implies in our rescaled variables that
  \[
  \liminf_{\bar \tau \to -\infty} \sup_{u \geq a^{-1} r^*} Y(u, \bar \tau) =0.
  \]
  On the other hand, the construction of the barrier $Y_a(u)$ in \cite{Br} implies that
  \[
  \inf_{u \geq a^{-1} r^*} Y_a(u) =c(\tau) >0
  \]
  which implies that \eqref{eqn-Y111} holds, for some $\bar \tau \ll \tau$.
  
  We can now apply the comparison principle between our solution $Y(u,\tau)$ of \eqref{eqn-Y} and the supersolution $Y_a(u) $
  of the same equation on the domain $Q(\tau)$ defined
  by \eqref{eqn-Qtau}, to conclude that $Y(u,\tau') \leq Y_a(u)$ on $Q(\tau)$.  This in particular holds at $\tau'=\tau$, namely we have
  \begin{equation}\label{eqn-Y13}
    Y(u,\tau) \leq Y_{a(\tau)}(u), \qquad \,\, \forall \,\, u \in [a(\tau)^{-1} r^*, \lambda(\tau)].
  \end{equation}
  
  To finish the proof of the Proposition, we will bound from above $Y_{a(\tau)}(u)$ for $u \in [\theta, \sqrt{2}]$.  Recall the definition of $Y_a(u)$ for
  $u \in [N\, a^{-1}, \sqrt{2}]$ as
  \[
  Y_a(u) = \varphi \big ( a\, \frac{u}{\sqrt{2} } \big ) - a^{-2} + a^{-4} \zeta(u)
  \]
  where $ \varphi(s) = s^{-2} + O(s^{-4})$, as $s \gg +\infty$ and $\zeta(s)$ bounded for $s \in [\theta, \sqrt{2}]$ with a bound depending on $\theta$.
  Recalling the definition of $a=a(\tau)$ to satisfy ${\ds a^{2} = \frac{2 |\tau|}{1+\delta}}$, we have $a \gg 1$ for $\tau \leq \tau_0 \ll -1$.  Hence,
  \begin{equation}\label{eqn-Y14}
    Y_a(u) \leq a^{-2} (2u^{-2} -1 ) + C_\theta \, a^{-4} \leq \frac{1+\delta}{|\tau|} (2u^{-2} -1 ) + \frac {C_\theta}{|\tau|^2}
  \end{equation}
  for all $ u \in [\theta, \sqrt{2}].$ Since $r^*\, a^{-1} < \theta$ and $\lambda(\tau)=u(M,\tau) < \sqrt{2}$ for $\tau \leq \tau_0 \ll -1$, we finally conclude by combining \eqref{eqn-Y13} and \eqref{eqn-Y14} that our desired bound \eqref{eqn-Y15} (with $\delta$ replaced by $2\delta$) holds on $u \in [\theta, \lambda(\tau)]$ finishing the proof of the proposition.
  
\end{proof}

\smallskip

We will now give the proof of our main result in this section, Proposition \ref{prop-intermediate}.

\begin{proof}[Proof of Proposition \ref{prop-intermediate}] We will show that the two bounds
  \begin{equation}\label{eqn-bound-ab}
    \liminf_{\tau \to +\infty} \bar u(z,\tau) \geq \sqrt{2 - \frac{z^2}{2}} \qquad \mbox{and} \qquad \limsup_{\tau \to +\infty} \bar u(z,\tau) \leq \sqrt{2 - \frac{z^2}{2}}.
  \end{equation}
  hold uniformly on $u \geq \theta$, for any $\theta \in (0, \sqrt{2})$.
  
  \smallskip
  \smallskip
  
  \noindent{\em The bound from below in \eqref{eqn-bound-ab}}:
  The desired bound
  clearly holds in the parabolic region $|\sigma| \leq M$, for any $M >0$ (see Lemma \ref{lemma-Y10}).
  Hence, it is sufficient to show that for the given $\theta \in (0, \sqrt{2})$ and for
  any $\delta >0$ small, there exists $\tau_0=\tau_0(\theta, \delta)$
  such that
  \begin{equation}\label{eqn-lower10}
    u(z,\tau) \geq \sqrt{2 - (1+\delta) \, \frac{z^2}{2}}
  \end{equation}
  holds on the $\cI_{M,\theta} \cap \{ z : \,\, (1+\delta)\, z^2 <4 \}$, for $M \gg 1$ (we need to have $M \geq M_\delta$,
  where $M_\delta$ is defined in Proposition \ref{prop-Y-bound-above}).
  This will follow by integrating on $ [M, \sigma]$, for $|\sigma | \geq M \gg 1$ the bound
  \[
  u_\sigma^2(\sigma,\tau) \leq \frac {1+\delta}{2|\tau|} \big ( 2\, u(\sigma,\tau)^{-2} - 1 \big ), \qquad \tau \leq \tau_0(\theta, \delta) \ll -1
  \]
  shown in Proposition \ref{prop-Y-bound-above}.  Assume that $\sigma \geq M$ as the case $\sigma < -M$ is similar.
  Multiplying both sides of the previous inequality by $u^2$ and taking square roots, we obtain
  (since $u_\sigma <0$ for $\sigma \geq M \gg 1$), the differential inequality
  \[
  - \frac{u u_\sigma}{ \sqrt{ 2- u^2} } \leq \frac {\sqrt{1+\delta}}{\sqrt{2|\tau|}}.
  \]
  Hence, after integration on $[M, \sigma]$ we get
  \[
  \sqrt{ 2- u^2(\sigma,\tau)} - \sqrt{ 2- u^2(M,\tau)} \leq \frac {\sqrt{1+\delta}}{\sqrt{2|\tau|}} (\sigma - M).
  \]
  Next we use the parabolic region asymptotics \eqref{eqn-u13} which give for $M \gg 1$
  \[
  \sqrt{ 2- u^2(M,\tau)} = \frac{\sqrt{M^2-2}}{\sqrt{2|\tau|}} + o_M \big ( \frac 1{\sqrt{|\tau|}} \big ) \leq \frac{M}{\sqrt{2|\tau|}}\, \sqrt{1+\delta}
  \]
  and combine it with the previous estimate to obtain the bound
  \[
  \sqrt{ 2- u^2(\sigma,\tau) } \leq \frac {\sqrt{1+\delta}}{\sqrt{2|\tau|}} (\sigma - M) + \frac{M}{\sqrt{2|\tau|}} = \sqrt{1+\delta} \, \frac{ \sigma}{\sqrt{2|\tau|}}.
  \]
  Recalling our notation $\bar u(z,\tau) = u(\sigma,\tau)$, with $z:=\sigma/\sqrt{|\tau|}$, the above estimate can be written after being squared as
  \begin{equation}
    \label{eq-help-bound-120}
    2- \bar u^2(z,\tau) \leq (1+\delta) \, \frac{z^2}{2} \implies \bar u(z,\tau) \geq \sqrt{ 2 - (1+\delta) \, \frac{z^2}{2}}.
  \end{equation}
  The same estimate holds for $z < 0$
  and the bound \eqref{eqn-lower10} is shown.  Note that the above inequality holds only when $(1+\delta)\, z^2 <4 $
  and for $\bar u \geq \theta$.  It follows that
  \[
  \liminf_{\tau \to -\infty} \bar u(z,\tau) \geq \sqrt{ 2 - (1+\delta) \, \frac{z^2}{2}}
  \]
  and since this holds for any $\delta >0$, we conclude the bound from below in \eqref{eqn-bound-ab}.
  
  \smallskip
  
  \noindent {\em The bound from above in \eqref{eqn-bound-ab}}: This estimate follows from our equation \eqref{eqn-uz}
  after we use the concavity of $\bar u$ and estimate \eqref{eqn-Y15} to deduce from \eqref{eqn-uz} the first order partial differential inequality
  \begin{equation}\label{eqn-buz-25} \bar u_\tau \leq - \frac{z}{2} \, \bar u_{z} \big (1 - \frac{\kappa}{|\tau|} \big ) - \frac 1{ \bar u} + \frac {\bar u}2
  \end{equation}
  which holds on $\bar u \geq \theta$, for any $\theta \in (0,\sqrt{2})$ and for some $\kappa = \kappa(\theta) >0$.  Let us first verify that \eqref{eqn-buz-25} follows from our equation \eqref{eqn-uz} and $\bar u_{zz} \leq 0$.  First, $\bar u_{zz} <0$ and $u_z <0$ iff $z >0$, implies that
  \[
  - \frac 1{|\tau|} \bar J(z,\tau) \, {\bar u}_{z} = - \frac {2\,\bar u_z} {|\tau|} \int_0^z \bar u_{zz}\, dx \leq 0.
  \]
  Next we observe that
  \[
  \frac{{\bar u}_{z}^2}{|\tau| u} \leq \frac z2 \, \frac{\kappa}{|\tau|} \qquad \mbox{or equivalently} \qquad \frac{u_\sigma^2}{u} \leq \frac z2 \, \frac{\kappa}{|\tau|}
  \]
  for some $\kappa = \kappa(\theta) >0$.
  Indeed, the latter estimate follows immediately by \eqref{eqn-Y15} and \eqref{eq-help-bound-120}.  Using once more $\bar u_{zz} < 0$, to estimate the diffusion term in \eqref{eqn-uz}, we conclude that \eqref{eqn-buz-25}
  holds.
  
  Now let's integrate \eqref{eqn-buz-25}.
  Defining $\bv := \bu ^2 - 2$, \eqref{eqn-buz-25} becomes equivalent to
  \[
  \bv_\tau \le -\frac z2\, \big (1 - \frac{\kappa}{|\tau|} \big )\, \bv_z + \bv.
  \]
  We see $\bv(z,\tau)$ is a subsolution to the first order partial differential equation
  \[
  \frac{\pd}{\pd\tau} w = -\frac z2 \, \big (1 - \frac{\kappa}{|\tau|} \big ) \, w_z + w,
  \]
  which we can write as
  \begin{equation}
    \label{eq-method-char}
    \frac{d}{d\tau} w(z(\tau),\tau) = w(z(\tau),\tau),
  \end{equation}
  where
  \begin{equation}
    \label{eq-characteristics}
    \frac{d}{d\tau} z = \frac z2\, \big (1 - \frac{\kappa}{|\tau|} \big )
  \end{equation}
  is the characteristic equation for \eqref{eq-method-char}
  (see Figure~\ref{fig-characteristics}).
  
  Assume the curve $(z(\tau),\tau)$ connects $(z,\tau)$ and $(z_1,\tau_1)$ with $z_1 = M/\sqrt{|\tau_1|}$, for $M > 0$ big.
  Integrate \eqref{eq-characteristics} from $\tau_1$ to $\tau$ to get
  \[
  \tau_1 = \tau + \log \Big(\frac{M^2}{z^2|\tau_1|}\Big) + \kappa \, \log \Big (\frac{|\tau|}{|\tau_1|}\Big )
  \]
  from which it also follows that for $\bar u \geq \theta$, we have
  \[
  \tau_1 = \tau \, \big ( 1 + \epsilon_1(\tau) \big )
  \]
  where $\epsilon_1(\tau) <0$ satisfies $\lim_{\tau \to -\infty} \epsilon_1(\tau)=0$.
  At the point $z_1 = \frac{M}{\sqrt{|\tau_1|}}$ we can use \eqref{eqn-u13} to compute $\bv$:
  \[
  \bv (z_1,\tau_1) = \bu(z_1, \tau_1)^2 - 2 = -\frac{M^2-2}{2|\tau_1|}(1+\epsilon(\tau_1))
  \]
  where here and below $\epsilon(\tau)$ will denote functions which may change from line to line but all satisfy $\lim_{\tau \to -\infty} \epsilon(\tau)=0$.
  \begin{figure}\centering
    \includegraphics[scale=0.8]{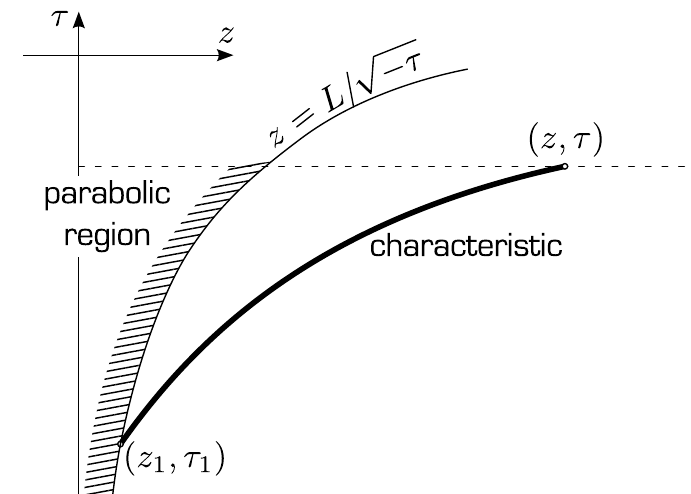}
    \caption{To estimate $\bv$ at $(z, \tau)$ we follow the characteristic through $(z,\tau)$ back to the boundary of the parabolic region, where $y=M$, $z=M/\sqrt{|\tau|}$.}
    \label{fig-characteristics}
  \end{figure}
  On the other hand, if we integrate \eqref{eq-method-char} from $\tau$ to $\tau_1$ we get
  \[
  w(z,\tau) = e^{\tau - \tau_1}\, w(z_1,\tau_1),
  \]
  and we can start $w$ with the initial condition $w(z_1,\tau_1) = \bv(z_1, \tau_1)$, so that
  \[
  w(z,\tau) = - \frac{z^2 |\tau|}{|\tau_1|}\,\frac{M^2-2}{2M^2} \, (1 + \epsilon(\tau_1)),
  \]
  where we have seen hat $ \tau_1 = \tau \big ( 1 + \epsilon_1(\tau) \big ).$
  Therefore,
  \[
  w(z,\tau)
  = -\frac{z^2|\tau|}{|\tau| \, \big (1 + \epsilon(\tau) \big ) }
  \frac{M^2-2}{2M^2}\,
  (1 + \epsilon_1(\tau)) = -z^2 \, \frac{M^2-2}{2M^2} \, \big (1+ \bar \epsilon(\tau) \big),
  \]
  where $\lim_{\tau \to + \infty } \bar \epsilon(\tau)=0$.
  Since $\bv (z_1,\tau_1) = w(z_1,\tau_1)$, by the maximum principle applied to \eqref{eq-method-char}, along characteristics $(z(\tau),\tau)$ connecting $(z_1,\tau_1)$ and $(z,\tau)$ we have
  \[
  \bv (z,\tau) \le w(z,\tau).
  \]
  This implies that for all $z\ge M/\sqrt{|\tau|}$ and as long as $\bar u(z,\tau) \geq \theta$ one has
  \[
  \bu (z,\tau) \le \sqrt{2 - \frac{M^2-2}{2M^2}\, z^2\, (1+ \epsilon(\tau))},
  \]
  where again $\lim_{\tau\to -\infty} \epsilon(\tau) = 0$.
  Hence, for all $z\in (0, \sqrt{2})$ with $\bar u(z,\tau) \geq \theta$, we have
  \[
  \limsup_{\tau\to-\infty} \bu(z, \tau) \le \sqrt{2 - \frac{M^2-2}{2M^2}\, z^2}.
  \]
  Since this holds for all $M>0$, we finally conclude the upper bound in \eqref{eqn-bound-ab}
  which holds uniformly on $\bar u \geq \theta$, for any $\theta \in (0, \sqrt{2})$.
  
\end{proof}

\section{Tip region asymptotics}

\label{sec-tip}
Recall that in commuting variables $\sigma$ and $\tau$ the rescaled radius $u(\sigma,\tau)$ satisfies equation \eqref{eqn-un}.  We will use this
equation to obtain an estimate for the rescaled diameter $\bd (\tau)$.  In our reflection symmetric case the location of the tips
is $\sigma = \sigma_\pm(\tau) = \pm \sigma(\tau)$, hence $\bd (\tau) = \sigma_+(\tau) - \sigma_-(\tau) = 2\sigma(\tau)$.
Since $u(\sigma(\tau),\tau) = 0$, after differentiating it in $\tau$ and using that $u_{\sigma}(\sigma(\tau),\tau) = -1$ and $u_{\sigma\sigma}(\sigma(\tau),\tau) = 0$, we obtain that $\bar \sigma(\tau)$ satisfies
\begin{equation}
  \label{ODE-diameter}
  \sigma'(\tau) = \frac{\sigma(\tau)}{2} + J(\sigma(\tau),\tau),
\end{equation}
where we recall that $J(\sigma,\tau)$ is the non-local term given by \eqref{eqn-defn-J}.
Using results in \cite{Br} we prove the following Proposition.

\begin{proposition}
  \label{prop-J}
  For every $\epsilon > 0$, there exists a $\tau_0 = \tau_0(\epsilon) \ll -1$ so that for all $\tau \le \tau_0$ we have
  \[
  \left| \frac{J(\sigma(\tau),\tau)}{\sqrt{\kappa(\tau)}} - C_0\right| < \epsilon, \qquad \mbox{with}\,\,\, C_0 := \int_0^{\infty} \frac{Z_0'(\rho)}{\rho \sqrt{Z_0(\rho)}}
  \, d\rho
  \]
  where $Z_0(\rho)$ is the Bryant soliton whose maximal curvature is equal to one, and $\kappa(\tau)$ is the maximum of scalar curvature of our evolving metric at time $\tau$.
\end{proposition}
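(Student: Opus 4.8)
The plan is to recast the nonlocal quantity $J(\sigma(\tau),\tau)$ as an integral of the rescaled tip profile and then pass to the limit using the convergence of that profile to the Bryant soliton. For the \emph{reduction}, note that with $Y:=u_\sigma^2$ one has $u_{\sigma\sigma}=\tfrac12 Y_u$, and on $[0,\sigma(\tau)]$ we have $u_\sigma<0$, so $d\sigma=du/u_\sigma=-du/\sqrt Y$. Substituting into $J(\sigma(\tau),\tau)=2\int_0^{\sigma(\tau)}\frac{u_{\sigma\sigma}}{u}\,d\sigma$ gives $J(\sigma(\tau),\tau)=\int_0^{u(0,\tau)}\frac{Y_u(u,\tau)}{u\sqrt{Y(u,\tau)}}\,du$. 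Rescaling by the tip curvature, set $\rho:=\sqrt{\kappa(\tau)}\,u$ and $Z(\rho,\tau):=Y(\rho/\sqrt{\kappa(\tau)},\tau)$, which is exactly $u_\sigma^2$ computed in the rescaled metric $\kappa(\tau)\,\tilde g(\cdot,\tau)$. Since $Y_u=\sqrt{\kappa(\tau)}\,Z_\rho$, the substitution yields
\[
\frac{J(\sigma(\tau),\tau)}{\sqrt{\kappa(\tau)}}=\int_0^{\rho_{\max}(\tau)}\frac{Z_\rho(\rho,\tau)}{\rho\,\sqrt{Z(\rho,\tau)}}\,d\rho ,\qquad \rho_{\max}(\tau):=\sqrt{\kappa(\tau)}\,u(0,\tau),
\]
so it suffices to show the right-hand side tends to $C_0=\int_0^\infty\frac{Z_0'(\rho)}{\rho\sqrt{Z_0(\rho)}}\,d\rho$ as $\tau\to-\infty$.

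The main \emph{input} is that, rescaled by $\kappa(\tau)$, the flow converges near the tip to the Bryant soliton. By Perelman's compactness theorem the rescalings $(S^3,\bar g_t(\cdot,s),p_t)$ subconverge in $C^\infty_{\mathrm{loc}}$ to a noncompact, nonflat, rotationally symmetric $\kappa$-solution with a tip — noncompact because $\diam \tilde g(\tau)\asymp\sqrt{|\tau|}\to\infty$ while $\kappa(\tau)$ is bounded below by Proposition \ref{prop-back-limit} — and by Brendle's classification in \cite{Br} it must be the Bryant steady soliton, normalized so that its maximal scalar curvature (attained at the tip) equals $1$. In profile form this is $Z(\rho,\tau)\to Z_0(\rho)$ in $C^2_{\mathrm{loc}}([0,\infty))$, with $Z_0(0)=1$, $Z_0'<0$, and $Z_0(\rho)=C_*\rho^{-2}(1+o(1))$ as $\rho\to\infty$ (the paraboloid end), so that $\frac{Z_0'}{\rho\sqrt{Z_0}}$ is bounded near $0$ and $O(\rho^{-3})$ at infinity and $C_0$ is a finite (negative) constant; also $\rho_{\max}(\tau)\to\infty$ since the limit is noncompact. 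Finally, I will use the scale comparison $\kappa(\tau)\asymp|\tau|$, obtained by matching the Bryant end of the tip region to the intermediate profile \eqref{eqn-asymz1} (equivalently, combining Proposition \ref{prop-Br1}, where $a^{-2}\asymp|\tau|^{-1}$, with the tip asymptotics); it enters only in the tail estimate.

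To \emph{pass to the limit}, fix $R\gg1$ and split the integral at $\rho=R$. On $[0,R]$ the $C^2_{\mathrm{loc}}$-convergence yields, for $\tau\le\tau_0(R)$, a uniform bound $Z(\cdot,\tau)\ge\tfrac12 Z_0(R)>0$ and, since $Z_\rho(0,\tau)=0$, a uniform bound $|Z_\rho(\rho,\tau)|\le C_R\rho$ near $\rho=0$; hence the integrand is uniformly bounded on $[0,R]$ and by dominated convergence $\int_0^R\frac{Z_\rho(\rho,\tau)}{\rho\sqrt{Z(\rho,\tau)}}\,d\rho\to\int_0^R\frac{Z_0'(\rho)}{\rho\sqrt{Z_0(\rho)}}\,d\rho$, which $\to C_0$ as $R\to\infty$. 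For the tail I write $\frac{Z_\rho}{\sqrt Z}=2\partial_\rho\sqrt Z$ and integrate by parts; since $Z(\rho_{\max}(\tau),\tau)=u_\sigma^2(0,\tau)=0$ the upper boundary term vanishes, leaving
\[
\int_R^{\rho_{\max}(\tau)}\frac{Z_\rho}{\rho\sqrt Z}\,d\rho=-\frac{2\sqrt{Z(R,\tau)}}{R}+\int_R^{\rho_{\max}(\tau)}\frac{2\sqrt Z}{\rho^2}\,d\rho .
\]
The first term tends to $-\frac{2\sqrt{Z_0(R)}}{R}=O(R^{-2})$. For the second I establish the uniform decay $Z(\rho,\tau)\le C\rho^{-2}$ on $[R,\rho_{\max}(\tau)]$: for $\rho\gtrsim\sqrt{\kappa(\tau)}$ (i.e.\ $u\gtrsim1$) this is Proposition \ref{prop-Br1}, which gives $Y\le Ca^{-2}\le C'/|\tau|$, combined with $\rho\le\rho_{\max}(\tau)\asymp\sqrt{\kappa(\tau)}\asymp\sqrt{|\tau|}$; for $R\le\rho\lesssim\sqrt{\kappa(\tau)}$ it follows from the barrier $Y(u,\tau)\le Y_a(u)$ of Proposition \ref{prop-Br2} with $a^2\asymp|\tau|$, using the cylindrical-part bound $Y_a(u)\le C(au)^{-2}$ and again $\kappa(\tau)\asymp|\tau|$. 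Thus $\int_R^{\rho_{\max}(\tau)}\frac{2\sqrt Z}{\rho^2}\,d\rho\le\int_R^\infty 2C\rho^{-3}\,d\rho=C/R^2$. Combining the three estimates, $\bigl|J(\sigma(\tau),\tau)/\sqrt{\kappa(\tau)}-C_0\bigr|<\epsilon$ for all $\tau$ below a threshold depending on $\epsilon$, after first choosing $R$ large.

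The \emph{main obstacle} will be the tail estimate: one has to control $Z(\rho,\tau)$ on the \emph{growing} range bridging the fixed-size tip neighborhood and the neck, which requires gluing the Bryant convergence to Brendle's barriers and, crucially, using the scale comparison $\kappa(\tau)\asymp|\tau|$, which is itself not immediate. A secondary point needing care is the justification of the $C^\infty_{\mathrm{loc}}$ Bryant convergence at the tip — noncompactness and nonflatness of the blow-up and the identification of its normalization — which rests on the diameter growth $\diam \tilde g(\tau)\asymp\sqrt{|\tau|}$ established in Section \ref{section-intermediate}.
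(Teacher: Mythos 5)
Your overall route is the same as the paper's: blow up at the tip by $\kappa(\tau)$, identify the limit with the Bryant soliton of maximal curvature one (Perelman compactness, noncompactness from the diameter growth $\sigma(\tau)\approx 2\sqrt{|\tau|}$, Brendle's classification -- note you should also say why the cylinder is excluded; the paper does this by observing $K_0=K_1$ at the tip), rewrite $J(\sigma(\tau),\tau)$ as $\int_0^{\rho_{\max}(\tau)}\frac{Z_\rho}{\rho\sqrt Z}\,d\rho$, split at a fixed $\rho=R$, and let the compact piece converge to $\int_0^R\frac{Z_0'}{\rho\sqrt{Z_0}}\,d\rho$. The genuine gap is in your tail estimate. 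Your uniform decay $Z(\rho,\tau)\le C\rho^{-2}$ on $[R,\rho_{\max}(\tau)]$ hinges on the scale comparison $\kappa(\tau)\asymp|\tau|$, and specifically on the \emph{upper} bound $\kappa(\tau)\le C|\tau|$: that is what converts $Y\le C/|\tau|$ (respectively $Y_a(u)\le C(au)^{-2}$ with $a^2\asymp|\tau|$) into a bound by $C/(\kappa(\tau)u^2)=C\rho^{-2}$. But that upper bound is not available at this stage: in the paper it is Corollary \ref{cor-curv-asymp}, which is deduced \emph{from} Proposition \ref{prop-J} together with the ODE \eqref{ODE-diameter}, so invoking it here is circular; and it does not follow from ``Proposition \ref{prop-Br1} plus the tip asymptotics'' as you assert, since Brendle's barriers are supersolutions, i.e.\ upper bounds on $Y$ and hence lower bounds on $K_1=(1-Y)/u^2$, and matching them to the Bryant profile at fixed $\rho$ can only give $\kappa(\tau)\gtrsim|\tau|$, not $\kappa(\tau)\lesssim|\tau|$. (A secondary point: the bound $Y_a(u)\le C(au)^{-2}$ down to $u\asymp R/\sqrt{\kappa(\tau)}$ uses finer information on Brendle's barrier than the expansion \eqref{eqn-asym-Ya} quoted in the paper, which is stated only for $u$ near $\sqrt2$.)

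The gap is avoidable, and your own computation almost shows how: since $Z_\rho=Y_u=2u_{\sigma\sigma}\le 0$ by concavity, the tail integrand $\frac{Z_\rho}{\rho\sqrt Z}$ is nonpositive, so after your integration by parts the tail integral is squeezed between $-\frac{2\sqrt{Z(R,\tau)}}{R}$ and $0$; the positive bulk term $\int_R^{\rho_{\max}(\tau)}\frac{2\sqrt Z}{\rho^2}\,d\rho$ never needs to be estimated at all. This is exactly the paper's device (it integrates by parts in $\sigma$ over the neck and discards $-\int u_\sigma^2/u^2\,d\sigma$ by sign, keeping only the boundary term $-\frac{u_\sigma}{u}$ at $u=L/\sqrt{\kappa(\tau)}$), and it requires only the compact-$\rho$ convergence $Z(R,\tau)\to Z_0(R)=O(R^{-2})$ -- no global decay of $Z$ across the growing range and no a priori relation between $\kappa(\tau)$ and $|\tau|$. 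With that modification your argument closes; as written, the tail step rests on an unproved and logically downstream fact.
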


\begin{proof}
  By Corollary \ref{cor-max-curv-location} we know the maximum of scalar curvature is attained at $\sigma = \sigma(\tau)$ and $\sigma = -\sigma(\tau)$.  Denote by $p_{\tau}$ the point of the maximum of scalar curvature at time $\tau$.  Take any sequence $\tau_i\to -\infty$ and set $p_i:=p_{\tau_i}$.  Dilate our metric by $\kappa_i = \kappa(\tau_i) := R(p_i,\tau_i)$.  By the compactness argument of Perelman for $\kappa$-solutions (see section 11 in \cite{Pe1}) we can extract a subsequence of dilated solutions converging to a $\kappa$-solution.  We claim that this limiting $\kappa$-solution needs to be non compact.  To show this, lets argue by contradiction and assume that the limit is compact.  This would imply there existed a uniform constant $C$ so that $\sqrt{R(p_i,\tau_i)}\, \sigma(\tau_i) \le C$ for all $i$.  By Proposition \ref{prop-intermediate} we have $\sigma(\tau_i) = 2\sqrt{|\tau_i|}\, (1 + o(1))$ and hence we would have $R(p_i,\tau_i) \le \frac{C}{|\tau_i|}$ for all $i$.  Since $p_i$ is the point of maximal scalar curvature at time $\tau_i$, we would in particular have the bound $R(0,\tau_i) \le \frac{C}{|\tau_i|}$, for all $i$.  Since $R(0,\tau_i) \ge \frac{1}{u(0,\tau_i)}$ and the $\lim_{i\to\infty} u(0,\tau_i) = \sqrt{2}$ we immediately obtain contradiction.
  
  Once we know the limiting $\kappa$-solution is non compact, by the result in \cite{Br} we know that the limit is isometric to either the shrinking cylinder or the Bryant soliton whose maximal curvature is one.  Since at the tips the two curvatures $K_0$ and $K_1$ are equal we conclude that the limit must be the Bryant soliton of maximal curvature one.  This is equivalent to saying that if
  \[
  Z(\rho,\tau) = Y(u,\tau), \qquad \rho = \sqrt{\kappa(\tau)}\, u,
  \]
  then the $\lim_{\tau\to -\infty} Z(\rho,\tau) = Z_0(\rho)$, where $Z_0(\rho)$ is the unique Bryant soliton with maximal curvature one, namely $Z_0(\rho)$ is a solution of the equation
  \begin{equation}\label{eqn-Z0-in}
    Z_0 \, Z_{0\rho\rho} - \frac 12\, (Z_{0\rho})^2 + (1 - Z_0)\, \frac{Z_{0\rho}}{\rho} + 2\, (1-Z_0) \, \frac{Z_0}{\rho^2}=0
  \end{equation}
  satisfying $Z_0(0)=1$.
  
  It was shown by R.  Bryant in \cite{Bry} that for large and small $\rho$ the function $Z_0(\rho)$ satisfies the asymptotics
  \begin{equation}
    \label{eq-Z0-asymptotics}
    Z_0(\rho) =
    \begin{cases}
      \rho^{-2} + O(\rho^{-4}), &\mbox{as} \,\,\, \rho\to\infty \
      \\[2pt]
      1 - \rho^2 /6 + O(\rho^4), &\mbox{as}\,\,\, \rho\to0
    \end{cases}
  \end{equation}
  (see also in Appendix \ref{appendix-bryant} for the derivation of the exact constants).
  
  For any $\tau \ll -1$, define $\sigma_L(\tau) >0$ to be the number at which
  ${\ds u(\sigma_L(\tau), \tau) = \tfrac{L}{\sqrt{\kappa(\tau)}}}$ where
  $\kappa(\tau)=R(\sigma(\tau), \tau)$ is the scalar curvature at the tip.
  Then, we split
  \[
  J(\sigma(\tau),\tau) = - 2 \int_0^{\sigma(\tau)} \frac{u_{\sigma\sigma}}{u} d\sigma' = - 2 \int_0^{\sigma_L(\tau)} \frac{u_{\sigma\sigma}}{u} d\sigma'
  -2 \int_{\sigma_L(\tau)}^{\sigma(\tau)} \frac{u_{\sigma\sigma}}{u} d\sigma'.
  \]
  By integration by parts we have
  \[
  0 \leq - \int_0^{\sigma_L(\tau)} \frac{u_{\sigma\sigma}}u\, d\sigma' = -\int_0^{\sigma_L(\tau)} \frac{u_{\sigma}^2}{u^2}\, d\sigma' - \frac{u_\sigma}{u} |_{u = \tfrac{L}{\sqrt{\kappa(\tau)}}} \leq - \frac{u_\sigma}{u} |_{u = \tfrac{L}{\sqrt{\kappa(\tau)}}}.
  \]
  On the other hand the convergence to the soliton implies that
  \[
  - \frac {u_\sigma}{u} |_{u = \tfrac{L}{\sqrt{\kappa(\tau)}}} = \frac{\sqrt{\kappa(\tau)}}{L} \,
  \sqrt{ Y( \frac{L}{\sqrt{\kappa(\tau)}} ,\tau)} \approx \frac{\sqrt{\kappa(\tau)}}{L} \, \sqrt{Z_0(L) }
  \leq C \, \frac{\sqrt{\kappa(\tau)}}{L^2},
  \]
  where in the last inequality we used the soliton asymptotics \eqref{eq-Z0-asymptotics}.
  Therefore we conclude that
  \[
  - \int_0^{\sigma_L(\tau)} \frac{u_{\sigma\sigma}}u\, d\sigma' \leq C \, \frac{\sqrt{\kappa(\tau)}}{L^2}.
  \]
  For the other integral, using the change of variables $Y(u,\tau) = u_\sigma^2(\sigma,\tau)$, $u=u(\sigma,\tau)$ and the convergence to
  the soliton $Z_0$ we have
  \[
  -2 \int_{\sigma_L(\tau)}^{\bdt} \frac{u_{\sigma\sigma}}{u} d\sigma' = \int_0^{\tfrac{L}{\sqrt{\kappa(\tau)}}} \frac{Y_u}{u\sqrt{Y}}\, du
  \approx \sqrt{\kappa(\tau)} \, \int_0^{L} \frac{Z_0'(\rho)}{\rho \sqrt{Z_0(\rho)}}\, d\rho
  \]
  where we used that
  $Y_u = Z_0'(\rho)\, \rho_u = \sqrt{\kappa(\tau)} \, Z_0'(\rho)$ and ${\ds \frac{du}{u} = \frac{d\rho}{\rho}}.$
  Combining the above yields that for any $L \gg 1$ we have
  \[
  \Big | \frac{J(\sigma(\tau),\tau)}{\sqrt{\kappa(\tau)}} - C_0 \Big |
  \leq \frac{C}{L^2} + \Big | \int_0^{L} \frac{Z_0'(\rho)}{\rho \sqrt{Z_0(\rho)}}\, d\rho - C_0 \Big |
  \]
  and since $C_0 := \int_0^{\infty} \frac{Z_0'(\rho)}{\rho \sqrt{Z_0(\rho)}}\, d\rho$, we obtain
  \[
  \Big | \frac{J(\sigma(\tau),\tau)}{\sqrt{\kappa(\tau)}} - C_0 \Big | \leq \frac{C}{L^2} + \int_L^{\infty} \frac{Z_0'(\rho)}{\rho \sqrt{Z_0(\rho)}}\, d\rho
  \to 0, \quad \, \mbox{as} \,\, L \to +\infty.
  \]
  This concludes the proof of the proposition.
  
\end{proof}

The next Proposition relates the constant $C_0$ with the asymptotic behavior of the Bryant soliton $Z_0(\rho)$ of maximum scalar curvature one, as $\rho \to -\infty$.

\begin{proposition}\label{prop-C0-11} Assume that $Z_0$ is the Bryant soliton with maximum scalar curvature at the tip equal to one.  Then,
  \[
  C_0 := \int_0^{\infty} \frac{Z_0'}{\rho \sqrt{Z_0}} \, d\rho = -1.
  \]
\end{proposition}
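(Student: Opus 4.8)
The plan is to recognize $C_0$ as a boundary term once we pass to the intrinsic geometry of the Bryant soliton. Write the soliton metric as $g = ds^2 + \rho^2\, g_{S^2}$, with $s$ the distance to the tip, so that $\rho = \rho(s)$; since $Z_0(\rho)$ arises as the limit of $Y = u_\sigma^2 = \psi_s^2$, we have $Z_0(\rho) = \rho_s^2$. Because $Z_0 > 0$ on $(0,\infty)$, the map $\rho \mapsto s$ is a smooth increasing bijection of $[0,\infty)$ onto $[0,\infty)$ with $d\rho = \sqrt{Z_0}\, ds$, and $Z_0'(\rho) = \tfrac{d}{d\rho}(\rho_s^2) = 2\rho_{ss}$. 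Recalling $K_0 = -\rho_{ss}/\rho$, this lets me rewrite
\[
C_0 = \int_0^\infty \frac{Z_0'(\rho)}{\rho\sqrt{Z_0}}\, d\rho = \int_0^\infty \frac{2\rho_{ss}}{\rho}\, ds = -2\int_0^\infty K_0\, ds .
\]

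Next I would bring in the soliton potential $f$, normalized so that $\mathrm{Ric} + \mathrm{Hess}\, f = 0$. By rotational symmetry $f = f(s)$, and $f_s(0)=0$ because $\nabla f$ must vanish at the tip. Taking the $ds^2$-component of the soliton equation and using $\mathrm{Rc} = 2K_0\, ds^2 + (K_0+K_1)\rho^2 g_{S^2}$ gives the pointwise identity $f_{ss} = -2K_0$. Hence
\[
C_0 = \int_0^\infty f_{ss}\, ds = \lim_{s\to\infty} f_s(s) - f_s(0) = \lim_{s\to\infty} f_s(s),
\]
the integral converging since $K_0$ is bounded near the tip and, by \eqref{eq-Z0-asymptotics}, $K_0 = O(\rho^{-4})$ (equivalently $O(s^{-2})$) at infinity.

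To finish I would evaluate the limit via the steady-soliton identity $R + |\nabla f|^2 \equiv \text{const}$; evaluating at the tip, where $\nabla f = 0$, identifies the constant with $R$ at the tip, i.e.\ with the maximal scalar curvature, which equals $1$ by hypothesis. By \eqref{eq-Z0-asymptotics}, $R = 4K_0 + 2K_1 = 2\rho^{-2} + O(\rho^{-4}) \to 0$ as $s\to\infty$, so $f_s^2 \to 1$; and since $\mathrm{Hess}\, f = -\mathrm{Ric} < 0$ makes $f$ strictly concave with maximum at the tip, $f_s < 0$ for $s>0$, so $f_s \to -1$. Therefore $C_0 = -1$.

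The only real work is the bookkeeping in the reduction: checking that $Z_0$ stays positive on $(0,\infty)$, that $\rho(s)\to\infty$, and that $f$ and $f_s$ have the claimed behavior at both ends — all standard features of the Bryant soliton, consistent with the asymptotics \eqref{eq-Z0-asymptotics} already recorded above. No delicate estimate is involved beyond that.
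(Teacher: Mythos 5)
Your proof is correct, but it follows a genuinely different route from the paper. The paper stays entirely inside the ODE framework: setting $\Psi=\sqrt{Z_0}$, it rewrites the soliton equation \eqref{eqn-Z0-in} so that the integrand becomes an exact derivative, $\tfrac{2}{\rho}\Psi_\rho=\tfrac{d}{d\rho}\bigl(\Psi_\rho+\tfrac1\rho(\Psi-\Psi^{-1})\bigr)$, and then evaluates the two boundary terms using the asymptotics \eqref{eq-Z0-asymptotics}, getting $-1-0$. You instead pass to arclength from the tip, identify $Z_0'(\rho)/(\rho\sqrt{Z_0})\,d\rho=2\rho_{ss}/\rho\,ds=-2K_0\,ds=f_{ss}\,ds$ via the gradient steady soliton equation $\mathrm{Ric}+\mathrm{Hess}\,f=0$, and evaluate the resulting boundary term with the standard identity $R+|\nabla f|^2\equiv\mathrm{const}=R_{\max}=1$, so that $C_0=\lim_{s\to\infty}f_s=-1$ (the sign fixed by concavity of $f$). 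Each step checks out: $f_s(0)=0$ at the tip, $f_{ss}=-2K_0$ is the radial component of the soliton equation in the warped-product frame, the integral converges since $K_0=O(s^{-2})$, and $R\to0$ at infinity follows from \eqref{eq-Z0-asymptotics}. What your argument buys is a conceptual explanation of the value: $-C_0$ is the asymptotic norm of $\nabla f$, i.e.\ $\sqrt{R_{\max}}$, and you only need the qualitative facts $R\to 0$ and $K_0=O(\rho^{-4})$ rather than the precise expansion; the cost is importing the soliton potential and the identity $R+|\nabla f|^2=\mathrm{const}$, which the paper's proof avoids by working purely with $Z_0$ and its ODE, keeping the argument self-contained in the variables used throughout Section \ref{sec-tip}.
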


\begin{proof}
  
  We first observe that
  \[
  C_0 := \int_0^{\infty} \frac{Z_0'}{\rho \sqrt{Z_0}} \, d\rho = \int_0^{\infty} \frac{2}{\rho} \, \frac{d \sqrt{Z_0}}{d\rho} \, d \rho.
  \]
  The idea is to express the integrand ${\ds \frac{2}{\rho} \, \frac{d}{d\rho} \sqrt{Z_0(\rho)} }$ in divergence form, using the soliton equation \eqref{eqn-Z0-in}.
  This will allow us to integrate it.
  
  We observe that \eqref{eqn-Z0-in} can be recognized as the pressure equation of $\Psi:=\sqrt{Z_0}$.  Indeed, a direct calculation shows that if $Z_0= \Psi^2$, then
  \[
  Z_0 \, Z_{0\rho\rho} - \frac 12\, (Z_{0\rho} )^2 = \Psi^2 \big (2 \Psi \Psi_{\rho\rho} + 2 \Psi_\rho^2 \big ) - \frac 12 (2 \Psi \Psi_{\rho} )^2 = 2 \Psi^3 \Psi_{\rho\rho}
  \]
  hence the soliton equation can be expressed in terms of $\Psi$ as
  \[
  2 \Psi^3 \Psi_{\rho\rho} + 2 \Psi \, (1 - \Psi^2)\, \frac{\Psi_\rho}{\rho} + 2\, (1-\Psi^2) \, \frac{\Psi^2}{\rho^2}=0.
  \]
  Dividing by $2\Psi^3$ gives
  \begin{equation}\label{eqn-PSI-C} \Psi_{\rho\rho} + (\Psi^{-2} - 1)\, \frac{\Psi_{\rho}}{\rho} + \frac 1{\rho^2}\, (\Psi^{-1}-\Psi)=0.
  \end{equation}
  Next notice that
  \[
  \frac d{d\rho} \big ( \frac 1\rho \, (\Psi-\Psi^{-1}) \big ) = - \frac 1{\rho^2} \, (\Psi-\Psi^{-1}) +(\Psi^{-2} + 1)\, \frac{\Psi_{\rho}}{\rho}
  \]
  which gives
  \[
  \frac 1{\rho^2}\, (\Psi^{-1}-\Psi) = \frac d{d\rho} \big ( \frac 1\rho \, (\Psi-\Psi^{-1}) \big ) - (\Psi^{-2} + 1)\, \frac{\Psi_{\rho}}{\rho}.
  \]
  Substituting this into \eqref{eqn-PSI-C} yields that the soliton equation can be expressed in terms of $\Psi$ as
  \[
  \Psi_{\rho\rho} + \frac d{d\rho} \big ( \frac 1\rho \, (\Psi-\Psi^{-1}) \big ) - 2 \frac{\Psi_\rho}{\rho} =0
  \]
  implying that
  \begin{equation}\label{eqn-divergence}
    \frac 2\rho \, \Psi_\rho = \frac d{d\rho} \big ( \Psi_\rho + \frac 1\rho \, (\Psi-\Psi^{-1}) \big ).
  \end{equation}
  
  \sk
  Now using \eqref{eqn-divergence} we obtain
  \[
  \begin{split}
    C_0 &:= \int_0^{\infty} \frac{2}{\rho} \, \frac{d \sqrt{Z_0}}{d\rho} \, d \rho = \int_0^{\infty} \frac 2\rho \, \Psi_\rho \, d \rho = \int_0^{\infty} \frac d{d\rho} \big ( \Psi_\rho + \frac 1\rho \, (\Psi-\Psi^{-1}) \big )\, d\rho \\
    & = \lim_{\rho \to +\infty } \big ( \Psi_\rho + \frac 1\rho \, (\Psi-\Psi^{-1}) \big ) - \lim_{r\to 0} \big ( \Psi_\rho + \frac 1\rho \, (\Psi-\Psi^{-1}) \big ).
  \end{split}
  \]
  By the Bryant soliton asymptotics \eqref{eq-Z0-asymptotics} it is easy to see that
  \[
  \lim_{r\to 0} \big ( \Psi_\rho + \frac 1\rho \, (\Psi-\Psi^{-1}) \big ) =0, \quad \lim_{r\to +\infty } \big ( \Psi_\rho + \frac 1\rho \, (\Psi-\Psi^{-1}) \big ) = - 1
  \]
  thus concluding that
  $C_0:= -1$, as claimed.
\end{proof}

\sk
As a corollary of the previous two Propositions we next compute the maximum rescaled scalar curvature.

\begin{corollary}
  \label{cor-curv-asymp} The maximum rescaled scalar curvature $\kappa(\tau)$ of our solution satisfies the asymptotic behavior
\begin{equation} \label{eqn-curv-asymp} \kappa(\tau) = |\tau| (1 + o(1)), \qquad \mbox{as} \,\, \tau\to -\infty.\end{equation}
  
\end{corollary}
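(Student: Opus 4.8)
The plan is to feed the asymptotics of the three ingredients of the diameter ODE \eqref{ODE-diameter} into that ODE. Propositions \ref{prop-J} and \ref{prop-C0-11} give $J(\sigma(\tau),\tau)=C_0\sqrt{\kappa(\tau)}(1+o(1))=-\sqrt{\kappa(\tau)}(1+o(1))$, so \eqref{ODE-diameter} can be rewritten as
\[
\frac{\sigma(\tau)}{2}-\sigma'(\tau)=\sqrt{\kappa(\tau)}\,(1+o(1)),\qquad \tau\to-\infty,
\]
while Proposition \ref{prop-intermediate} gives $\sigma(\tau)=2\sqrt{|\tau|}(1+o(1))$, hence $\tfrac12\sigma(\tau)=\sqrt{|\tau|}(1+o(1))$. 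Formally one would like to argue that $\sigma'(\tau)$ is of lower order and conclude $\sqrt{\kappa(\tau)}=\sqrt{|\tau|}(1+o(1))$; the obstacle is that the expansion $\sigma(\tau)=2\sqrt{|\tau|}(1+o(1))$ cannot be differentiated termwise without extra control on $\sigma'$.

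To avoid differentiating, I would integrate. Fix $h>0$ and integrate the displayed identity over $[\tau,\tau+h]$. Using $\int_\tau^{\tau+h}\sigma'(s)\,ds=\sigma(\tau+h)-\sigma(\tau)$ together with the expansion $\sigma(s)=2\sqrt{|s|}(1+o(1))$, one has $\int_\tau^{\tau+h}\tfrac12\sigma(s)\,ds=h\sqrt{|\tau|}(1+o(1))$ and, writing $\sigma(\tau+h)-\sigma(\tau)=\bigl[2\sqrt{|\tau+h|}-2\sqrt{|\tau|}\bigr]+2\bigl[\sqrt{|\tau+h|}\,\epsilon(\tau+h)-\sqrt{|\tau|}\,\epsilon(\tau)\bigr]$ with $\epsilon(\tau):=\sigma(\tau)/(2\sqrt{|\tau|})-1\to0$, also $\sigma(\tau+h)-\sigma(\tau)=o(\sqrt{|\tau|})$. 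Hence, for each fixed $h>0$,
\[
\frac1h\int_\tau^{\tau+h}\sqrt{\kappa(s)}\,ds=\sqrt{|\tau|}\,(1+o(1))\qquad(\tau\to-\infty).
\]

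The remaining and main point is to upgrade this average to the pointwise statement $\kappa(\tau)=|\tau|(1+o(1))$, which needs a bound on the oscillation of $\kappa$ over unit intervals. For this I would invoke Hamilton's Harnack inequality, already recalled in the introduction: $\partial_t R\ge0$ along the unrescaled flow, so $\max_x R(\cdot,t)$ is non-decreasing in $t$; since $\kappa(\tau)$ is the maximal scalar curvature of the parabolically rescaled metric, i.e. $\kappa(\tau)=(T-t)\max_x R(\cdot,t)$ with $T-t=e^{-\tau}$, the product $e^{\tau}\kappa(\tau)$ is non-decreasing in $\tau$. Consequently, for all $s\in[\tau,\tau+h]$ one has $e^{-h}\kappa(\tau)\le\kappa(s)\le e^{h}\kappa(\tau+h)$. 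Inserting the first inequality into the averaged relation gives $e^{-h/2}\sqrt{\kappa(\tau)}\le\sqrt{|\tau|}(1+o(1))$, hence $\limsup_{\tau\to-\infty}\kappa(\tau)/|\tau|\le e^{h}$; inserting the second gives $e^{h/2}\sqrt{\kappa(\tau+h)}\ge\sqrt{|\tau+h|}(1+o(1))$, hence $\liminf_{\tau\to-\infty}\kappa(\tau)/|\tau|\ge e^{-h}$. Letting $h\downarrow0$ in both bounds yields $\kappa(\tau)=|\tau|(1+o(1))$, which is \eqref{eqn-curv-asymp}. Every step except the Harnack monotonicity is elementary, so the genuinely substantive input is exactly that monotonicity, which is what converts the $L^1$-averaged control of $\sqrt{\kappa}$ into the pointwise asymptotic.
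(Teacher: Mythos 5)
Your argument is correct and shares the paper's skeleton: both proofs feed Propositions \ref{prop-J} and \ref{prop-C0-11} (i.e. $J(\sigma(\tau),\tau)=-\sqrt{\kappa(\tau)}\,(1+o(1))$) and the intermediate-region asymptotics \eqref{eq-diameter-asymp} into the tip ODE \eqref{ODE-diameter}, and then integrate over a short time interval. Where you genuinely differ is in the passage from the integrated identity to the pointwise statement \eqref{eqn-curv-asymp}. The paper integrates $\sigma'/\sigma$ over $[\tau,\tau+\epsilon]$, evaluates $\ln\bigl(\sigma(\tau+\epsilon)/\sigma(\tau)\bigr)$ from \eqref{eq-diameter-asymp}, and then divides by $\epsilon$ and lets $\epsilon\to 0$, which implicitly uses continuity of $\kappa$ and treats the difference of the $o(1)$-errors in $\sigma(\tau)=2\sqrt{|\tau|}(1+o(1))$ at two nearby times as negligible compared to $\epsilon/|\tau|$ --- precisely the non-differentiability issue you flag. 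You instead keep the interval length $h$ fixed, derive the averaged relation $\frac1h\int_\tau^{\tau+h}\sqrt{\kappa(s)}\,ds=\sqrt{|\tau|}\,(1+o(1))$, and upgrade it to a pointwise bound via Hamilton's Harnack monotonicity $R_t\ge 0$ (so that, up to the harmless $(T-t)/(-t)\to 1$ normalization factor, $e^{\tau}\kappa(\tau)$ is nondecreasing and the oscillation of $\kappa$ on $[\tau,\tau+h]$ is controlled by $e^{h}$), sending $h\downarrow 0$ only at the very end. What each route buys: the paper's is shorter but leans on a formal $\epsilon\to0$ limit interchange, while yours is more careful on exactly that point at the cost of one extra ingredient --- which, however, is standard and already quoted in the introduction of the paper --- so your proof is a valid and in this respect tighter variant of the same computation.
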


\begin{proof}
  On the one hand by Propositions \ref{prop-J} and \ref{prop-C0-11} we have
  \[
  J(\sigma(\tau),\tau) = - \sqrt{\kappa(\tau)} \, (1 + o(1)).
  \]
  On the other hand, the intermediate region asymptotics discussed in section \ref{section-intermediate} imply that
  \begin{equation}
    \label{eq-diameter-asymp}
    \sigma(\tau) = 2\, \sqrt{|\tau|}\, (1 + o(1)),\qquad \mbox{as} \,\,\, \tau\to -\infty.
  \end{equation}
  Hence by \eqref{ODE-diameter}
  \[
  \frac{\sigma'(\tau)}{\sigma(\tau)} = \frac12 + \frac{J(\sigma(\tau),\tau)}{\sigma(\tau)} = \frac12 - \frac{\sqrt{\kappa(\tau)}\, (1+o(1))}{2 \sqrt{|\tau|}}.
  \]
  Integrating this from $\tau$ to $\tau+\epsilon$, for any small $\epsilon > 0$ yields
  \[
  \ln\frac{\sigma(\tau+ \epsilon)}{\sigma(\tau)} = \frac{\epsilon}{2} - \frac{ (1 + o(1))}{2 \sqrt{|\tau|}}\, \int_{\tau}^{\tau+\epsilon} \sqrt{\kappa(\tau')}\, d\tau'.
  \]
  Using also \eqref{eq-diameter-asymp} we obtain
  \[
  -\frac{ \epsilon}{2|\tau|}\, (1+o(1)) = \frac{\epsilon}{2} - \frac{(1 + o(1))}{2\sqrt{|\tau|}}\, \int_{\tau}^{\tau+\epsilon} \sqrt{\kappa(\tau')}\, d\tau'.
  \]
  Divide by $\epsilon$ and then let $\epsilon\to 0$ above, to conclude that \eqref{eqn-curv-asymp} holds.
\end{proof}

Using the above result and similar arguments as in the proof of Theorem 1.6 in \cite{ADS1}, we obtain the following convergence theorem which we express in
terms of the unrescaled solution $(M, g(t))$.

\begin{theorem}
  Denote by $p_t$ the point of maximal scalar curvature of $(M,g(t))$.  For any $t < 0$ define the rescaled metric
  \[
  \tilde{g}_{t}(\cdot,s) = k(t) \, g(\cdot,t+k(t)^{-1}\, s),
  \]
  where $k(t) = R_{\max}(t) = R(p_t,t).$ Then
  \[
  k(t) = (1 + o(1))\, \frac{\log|t|}{|t|}, \qquad \mbox{as} \,\, t \to -\infty.
  \]
  Moreover, the family of rescaled solutions $(M,\tilde{g}_{t}(\cdot,s), p_{t})$ to the Ricci flow, converges to the unique Bryant soliton of maximal scalar curvature equal to one,
  namely the unique rotationally symmetric steady soliton with maximal scalar curvature equal to one.
\end{theorem}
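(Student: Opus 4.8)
The plan is to read off the curvature asymptotics from Corollary~\ref{cor-curv-asymp} by unwinding the two rescalings of $g$, and then to upgrade the subsequential convergence already obtained inside the proof of Proposition~\ref{prop-J} to convergence of the full rescaled Ricci flows.

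\emph{Curvature asymptotics.} Scalar curvature is homogeneous of degree $-1$ under scalings of the metric, so the parabolically rescaled metric $\tilde g(\cdot,\tau)=(-t)^{-1}g(\cdot,t)$ (with $\tau=-\log(T-t)$) has maximal scalar curvature $\kappa(\tau)=(-t)\,R_{\max}(g(\cdot,t))=|t|\,k(t)$. As $t\to-\infty$ one has $T-t=|t|(1+o(1))$, hence $|\tau|=\log(T-t)=\log|t|+o(1)=\log|t|\,(1+o(1))$ since $\log|t|\to\infty$. Combining this with Corollary~\ref{cor-curv-asymp} gives
\[
|t|\,k(t)=\kappa(\tau)=|\tau|\,(1+o(1))=\log|t|\,(1+o(1)),
\]
that is, $k(t)=(1+o(1))\,\frac{\log|t|}{|t|}$ as $t\to-\infty$.

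\emph{Compactness of the rescaled flows.} By construction $\tilde g_t(\cdot,s)$ is a parabolic rescaling of $g$ with the time origin moved to $t$; it is therefore again a Ricci flow in $s$, it remains $\kappa$-noncollapsed on all scales with the same $\kappa$ (noncollapsedness is scale invariant), it has nonnegative bounded curvature operator on each time slice, and hence it is a $\kappa$-solution on its time interval $s\in(-\infty,(T-t)k(t))$, whose right endpoint tends to $+\infty$ by the asymptotics of $k(t)$ just proved. At $s=0$ its scalar curvature is maximal at $p_t$ with $R_{\tilde g_t}(p_t,0)=k(t)^{-1}R(p_t,t)=1$. Moreover $\tilde g_t(\cdot,0)=|t|\,k(t)\,\tilde g(\cdot,\tau)=\kappa(\tau)\,\tilde g(\cdot,\tau)$, so the $s=0$ slice is exactly the dilation of $\tilde g(\cdot,\tau)$ by $\kappa(\tau)$ analysed in the proof of Proposition~\ref{prop-J}. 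Fix any sequence $t_j\to-\infty$. By Perelman's compactness theorem for $\kappa$-solutions (Section~11 of~\cite{Pe1}), applied with basepoints $p_{t_j}$ and the normalization $R_{\tilde g_{t_j}}(p_{t_j},0)=1$, a subsequence of $(M,\tilde g_{t_j}(\cdot,s),p_{t_j})$ converges in the pointed $C^\infty$ Cheeger--Gromov sense, as Ricci flows, to an ancient (in fact eternal) $\kappa$-solution $(M_\infty,g_\infty(s),p_\infty)$ with $R_{g_\infty}(p_\infty,0)=1$.

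\emph{Identification of the limit and convergence of the whole family.} The limit is noncompact: since $\diam(\tilde g_t(\cdot,0))=\sqrt{k(t)}\,d(t)$ and, by Proposition~\ref{prop-intermediate} (together with $d(t)=2\sqrt{T-t}\,\sigma(\tau)$ and $\sigma(\tau)=2\sqrt{|\tau|}(1+o(1))$), $d(t)=4\sqrt{|t|\log|t|}\,(1+o(1))$, the rescaled diameter equals $4\log|t|\,(1+o(1))\to\infty$; alternatively one may repeat the contradiction argument in the proof of Proposition~\ref{prop-J}. Hence, by Brendle's classification of three-dimensional noncompact ancient $\kappa$-solutions~\cite{Br}, $(M_\infty,g_\infty)$ is isometric either to a (quotient of a) shrinking round cylinder $S^2\times\mathbb R$ or to a Bryant soliton. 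The cylinder and its quotients are excluded because $p_t$ is a tip of the rotationally symmetric solution, where the two distinguished sectional curvatures satisfy $K_0(p_t,t)=K_1(p_t,t)$, a relation preserved under Cheeger--Gromov convergence, while on $S^2\times\mathbb R$ and its quotients these two sectional curvatures are nowhere equal. Thus $g_\infty$ is a Bryant soliton, and the normalization $R_{g_\infty}(p_\infty,0)=1$ identifies it with the unique rotationally symmetric steady gradient Ricci soliton of maximal scalar curvature one. Since this limit is independent of the chosen sequence and of the extracted subsequence, and since on the Bryant soliton the point of maximal scalar curvature is the tip, which is canonical, the whole family $(M,\tilde g_t(\cdot,s),p_t)$ converges to it as $t\to-\infty$; this last step is carried out exactly as in the proof of Theorem~1.6 in~\cite{ADS1}.

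\emph{Main obstacle.} The substantive input is the non-degeneracy/noncollapsing argument — already supplied by Proposition~\ref{prop-J} and Corollary~\ref{cor-curv-asymp} — that forces the subsequential limit to be noncompact with a definite scale; the rest is an assembly of Perelman's and Brendle's classification and compactness results. The most delicate routine point is making sure the curvature of $\tilde g_{t_j}$ stays uniformly bounded on a fixed forward interval $s\in[0,S]$, so that the limit is a genuine Ricci flow for positive times as well; this is immediate from the structure of $\kappa$-solutions, since the curvature bound at $s=0$ propagates to a uniform bound on any fixed interval $[0,S]$.
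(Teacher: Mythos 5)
Your route is essentially the paper's: the published proof is little more than a citation, transferring Corollary \ref{cor-curv-asymp} to the unrescaled variable $k(t)$ and then running the argument of Theorem 1.6 in \cite{ADS1} together with Hamilton's Harnack estimate \cite{Ha1}; your identification of the blow-up limit (Perelman compactness for $\kappa$-solutions, Brendle's classification \cite{Br}, exclusion of the cylinder and its quotients via $K_0=K_1$ at the tips) is exactly the mechanism already used in the proof of Proposition \ref{prop-J}, and your diameter computation from Proposition \ref{prop-intermediate} is a correct substitute for the contradiction argument there. The unwinding of $\kappa(\tau)=|\tau|(1+o(1))$ into $k(t)=(1+o(1))\log|t|/|t|$ is also fine, since $T-t=|t|(1+o(1))$ and $|\tau|=\log(T-t)$.

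The one place where your justification is wrong is precisely the point you flag as ``the most delicate routine point''. It is \emph{not} true for a general $\kappa$-solution that the normalization $R\le 1$ at $s=0$ ``propagates to a uniform bound on any fixed interval $[0,S]$'': the shrinking round sphere (or cylinder), normalized so that $R_{\max}=1$ at $s=0$, blows up at a definite forward time, and the universal estimate $\partial_s R\le \eta R^2$ for $\kappa$-solutions only controls the curvature forward for $s<1/\eta$. So the forward-in-time curvature bound needed to make the limit a Ricci flow on $(-\infty,S]$ cannot be dismissed as structural. It does, however, follow from what you have already established: by Hamilton's Harnack inequality \cite{Ha1} one has $R_t\ge 0$ pointwise, so $R_{\max}$ is nondecreasing, hence for $t'\in[t,\,t+k(t)^{-1}S]$ one has $R_{\max}(t')\le k\bigl(t+k(t)^{-1}S\bigr)$; since $k(t)^{-1}S=o(|t|)$, the asymptotics $k(t)=(1+o(1))\log|t|/|t|$ give $k\bigl(t+k(t)^{-1}S\bigr)=(1+o(1))\,k(t)$, so the curvature of $\tilde g_{t}$ is bounded by $1+o(1)$ on $[0,S]$. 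This is exactly the role the Harnack estimate (together with Corollary \ref{cor-curv-asymp}) plays in the argument of Theorem 1.6 of \cite{ADS1} that the paper invokes. With that repair, the remainder of your proposal, including the passage from subsequential to full convergence via uniqueness of the pointed limit, goes through.
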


\begin{proof}
  The proof follows by the same arguments as in the proof of Theorem 1.6 in \cite{ADS1}, using Corollary \ref{cor-curv-asymp} and Hamilton's Harnack estimate for the Ricci flow shown in \cite{Ha1}.
\end{proof}

\appendix

\section{Bryant soliton}

\label{appendix-bryant}

In \cite{Bry} Bryant showed that up to constant multiples, there is only one complete, steady, rotationally symmetric soliton in dimension three that is not flat.  It has positive sectional curvature which reaches the maximum at the center of rotation.  He showed that if we write the complete soliton (whose maximal sectional curvature is $1/6$) in the form $g = d\bar{s}^2 + f(\bar{s})^2 g_{S^2}$, where $\bar{s}$ is the distance from the center of rotation, one has, for large $\bar{s}$ the following asymptotics: the aperature $a(\bar{s})$ has leading order term $\sqrt{2\bar{s}}$, the radial sectional curvature has leading order term $1/4\bar{s}^2$ and the orbital sectional curvature has leading order term $1/{2\bar{s}}$.  Note that in our old notation we have $s = s(t) - \bar{s}$, where $s(t)$ is the distance from the origin to the center of rotation (or the point of maximal scalar curvature).

As it is explained at the end of Section \ref{sec-equations}, around the point of maximal scalar curvature (which in this case coincides with the center of rotation, see Corollary \ref{cor-max-curv-location}), it is more convenient to write the metric in the form $f_s^{-2} \, df^2+ f^2 g_{S^2}$, where $Z = f_s^2$ is a function of $f$.  In the case of a steady Ricci soliton, the profile function $Z$ is known to satisfy the equation
\[
Z Z_{ff} - \frac12 Z_f^2 + (1 -Z) \frac{Z_f}{f} + \frac{2(1-Z) Z}{f^2} = 0.
\]
Moreover, it is known that $Z(f)$ has the following asymptotics.  Near $ f = 0$, $Z$ is smooth and has the asymptotic expansion
\begin{equation}
  \label{eq-as-origin}
  Z(f) = 1 + b_0 \, f^2 + \frac25 \, b_0^2 \, f^4 + o(f^4),
\end{equation}
where $b_0 < 0$ is arbitrary.  As, $f \to + \infty$, $Z$ is smooth and has the asymptotic expansion
\begin{equation}
  \label{eq-as-infty}
  Z(f) = c_0 \, f^{-2} + 2 c_0^2 \, f^{-4} + o(f^{-4}),
\end{equation}
where $c_0 > 0$ depends on $b_0$.

\sk
\sk

We will next find (for the convenience of the reader) the exact values of the constants $b_0$ and $c_0$ in the above asymptotics
for the {\em Bryant soliton of maximal scalar curvature one}.  These exact asymptotics were used in Section \ref{sec-tip}.
\sk

Recall that the scalar curvature $R = 4K_0 + 2K_1$.  By Corollary \ref{cor-max-curv-location} we know that the maximal scalar curvature is attained at $\bar{s} = 0$, at which $K_0 = K_1$ and hence $R = 6 K_0$.  An easy consequence of Lemma \ref{lemma-bound-Q} is that the maximal scalar curvature being equal to one is equivalent to the maximal sectional curvature being equal to $1/6$.  Bryant's asymptotics
imply that for $\bar{s}$ is sufficiently large, the aperture $f \approx \sqrt{2\bar{s}}$, implying that $\bar{s} \approx {f^2}/{2}$.  We also have the radial sectional curvature $K_0 \approx 1/4\bar{s}^2 \approx f^{-4}$, for $a \gg 1$.  On the other hand, using \eqref{eq-as-infty} we also have
\[
K_0 = -\frac{Z_f}{2f} \approx \frac{c_0}{f^4}, \qquad \mbox{for}\,\,\, a\gg1
\]
implying that $c_0 = 1$.
We also have at $f = 0$ (which corresponds to a point of maximal sectional curvature),
$1/6 = K_1 =(1-Z)/a^2 \approx -b_0$, for $a \ll1$,
implying that $b_0 = -1/6$.

\sk

Summarizing the above discussion we conclude the following asymptotics for the {\em Bryant soliton with maximal scalar curvature equal to
one}:
\begin{equation}\label{eqn-Zasym10}
  Z(f)= \begin{cases} 1 - f^2/6 + O(f^4), &\qquad \mbox{as} \,\,\,\,\, f\to 0 \\
    f^{-2 } + O(f^{-4}), &\qquad \mbox{as} \,\,\,\, f\to\infty.
  \end{cases}
\end{equation}

\sk
\sk

\section{Properties of a rotationally symmetric solution}

\label{sec-apriori}

We assume throughout this section that $\psi(\cdot, s)$ is a solution to equation \eqref{eq-psi} or equivalently
$u(\cdot, \tau)$ is a solution of \eqref{eq-u}.  We will prove some crucial geometric properties which we used
in the proof of our main result in this paper.  We remark that we do not need to assume reflection symmetry.  Let us recall that we have denoted by $s_\pm(t)$ the tips of
our solution $M_t$ which in rescaled variables are denoted by $\sigma_\pm(\tau)$.  We will analyze our solution for $s\in [0,s_+(t)]$ (or $\sigma\in [0,\sigma_+(\tau)]$), since the solution for $s\in [s_-(t),0]$ (or $\sigma\in [\sigma_-(\tau),0]$) can be treated similarly.

We define next the scaling invariant quantity
\[
Q:= \frac{K_0}{K_1} = -\frac{\psi\psi_{ss}}{1 - \psi_s^2} = -\frac{u u_{\sigma\sigma}}{1 - u_{\sigma}^2}.
\]
In this section we will prove $Q \le 1$ everywhere on our manifold and that the maximum scalar curvature is attained at the tips $\sigma = \sigma_\pm(\tau)$.
A direct calculation shows that $Q$ satisfies the equation
\begin{equation}
  \label{eqn-Q}
  Q_t = Q_{ss} - \frac{2\psi_s}{\psi} (1-2Q) \, Q_s + \frac{2(1-Q)}{\psi^2} \, \left( (1-\psi_s^2) \, Q^2 + \psi_s^2 \, (2Q + 1)\right).
\end{equation}

Note that the closing conditions \eqref{closing-psi} imply that we can apply l'Hospital's rule to conclude
\[
K_1(s_+,t) = \lim_{s\to s_+} \left(-\frac{\psi_{ss}}{\psi}\right) = K_0(s_+,t)
\]
for all $t\in (-\infty,T)$.  Similarly we have $K_1(s_-,t) = K(s_-,t)$ for all $t\in (-\infty,T)$.
Hence, $Q(s_-,t) = Q(s_+,t) = 1$, for all $t\in (-\infty,T)$.

\begin{lemma}
  \label{lemma-bound-Q}
  Let $(S^3, g(t))$ be any rotationally symmetric compact, $\kappa$-noncollapsed ancient solution to the Ricci flow on $S^3$.
  Then, $Q \le 1$ on $S^3$, for all $t\in (-\infty,T)$.
\end{lemma}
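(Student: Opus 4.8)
The plan is to run a maximum-principle argument on the evolution equation \eqref{eqn-Q} for $Q$, exploiting the fact that $Q\equiv 1$ is a stationary solution of that equation. Three structural observations drive the proof. First, the closing conditions \eqref{closing-psi} and l'Hôpital's rule (as already noted in the excerpt) force $Q=1$ at the two tips at every time, so $Q$ extends continuously to the tips and $Q_{\max}(t):=\max_{S^3}Q(\cdot,t)\ge 1$ for all $t$. Second, concavity of $\psi$ --- equivalently $K_0\ge 0$, which holds on any $\kappa$-solution by Hamilton--Ivey pinching --- together with $\psi_s(s_\mp)=\pm1$ gives $0\le\psi_s^2\le 1$, hence $1-\psi_s^2\ge 0$ on all of $S^3$. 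Third, therefore whenever $Q>1$ the zero-order term
\[
\frac{2(1-Q)}{\psi^2}\Bigl((1-\psi_s^2)Q^2+\psi_s^2(2Q+1)\Bigr)
\]
in \eqref{eqn-Q} is strictly negative, since $1-Q<0$ while the bracket is $\ge\min(Q^2,2Q+1)>0$.

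Next I would apply Hamilton's trick to $Q_{\max}$. It is Lipschitz, and for a.e.\ $t$ one has $Q_{\max}'(t)\le Q_t(x,t)$ where $x$ is an interior spatial maximum of $Q(\cdot,t)$ --- interior whenever $Q_{\max}(t)>1$, since the tips give $Q=1$. At such $x$ we have $Q_s=0$ and $Q_{ss}\le 0$, so \eqref{eqn-Q} together with the third observation gives $Q_t(x,t)<0$; hence $Q_{\max}'(t)<0$ at every $t$ with $Q_{\max}(t)>1$. Consequently, if $Q_{\max}(t_1)>1$ for some $t_1$, then $Q_{\max}$ is strictly increasing as $t$ decreases along $(-\infty,t_1]$, so it stays above $1$ there and converges to some $A_\infty\in(1,+\infty]$ as $t\to-\infty$.

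The main work, and the main obstacle, is to rule this out, which splits into two cases. If $A_\infty<\infty$, then $Q\le A_\infty$ on $S^3\times(-\infty,t_1]$; using the uniform curvature bound \eqref{eq-curv-bound} and $\kappa$-noncollapsedness, Perelman's compactness theory for $\kappa$-solutions (no rescaling is needed, since $Q$ is scale invariant) produces an ancient limit $\kappa$-solution, still rotationally symmetric with $\psi_{ss}\le 0$, on which $Q\le A_\infty$ everywhere and $Q=A_\infty>1$ at an interior space-time point; the third observation then forces $Q_t<0$ there, contradicting that it is a space-time maximum. If $A_\infty=+\infty$, then since $K_0\le C$ by \eqref{eq-curv-bound} the maximizing points satisfy $K_1\to 0$; rescaling by $R\sim 4K_0$ at those points and extracting a limit $\kappa$-solution produces an interior point $x_\infty$ with $K_1(x_\infty)=0$, $K_0(x_\infty)=\tfrac14$, $R(x_\infty)=1$. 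Since $K_1\ge 0$ everywhere, $x_\infty$ is an interior minimum of $K_1=(1-\psi_s^2)/\psi^2$; differentiating at a point where $\psi_s^2=1$ gives $\frac{d}{ds}K_1=\frac{2\psi_s}{\psi}K_0$, which cannot vanish since $K_0(x_\infty)\ne 0$ and $\psi_s(x_\infty)^2\ne 0$ --- a contradiction. Hence $Q_{\max}(t)\le 1$ for all $t$, i.e. $Q\le 1$ on $S^3$.

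Two technical points need care and should be stated precisely: the regularity of $Q$ up to the tips (hence Lipschitz continuity of $Q_{\max}$), which follows from the closing conditions \eqref{closing-psi}; and the standard but delicate extraction of limits of ancient solutions as $t\to-\infty$, justified by Perelman's $\kappa$-solution theory. Everything else is the routine verification of the sign of the reaction term and the one-line computation of $\frac{d}{ds}K_1$ at a point with $\psi_s^2=1$ indicated above.
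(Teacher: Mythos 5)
Your route is genuinely different from the paper's. The paper never takes blow-up limits at $t\to-\infty$: after disposing of the round sphere, it uses concavity together with the backward convergence of $u$ to the cylinder value to get $\psi\le C\sqrt{T-t}$, so that in the rescaled time $\tau=-\log(T-t)$ the maximum-principle inequality at the spatial maximum becomes $\frac{d}{d\tau}(Q_{\max}-1)\le -c\,(Q_{\max}-1)^2$ with a uniform constant $c>0$, and backward integration of this Riccati inequality contradicts $Q_{\max}>1$ on an ancient time interval --- no compactness theory is needed. Your first case ($A_\infty<\infty$) does work along your lines, with one correction and one addendum: to invoke Perelman's compactness you must rescale by $R(x_i,t_i)$ (harmless, as you note, since $Q$ is scale invariant), and you should record that rotational symmetry passes to the limit (as in Lemma \ref{lemma-point-q}); then the normalization $4K_0+2K_1=1$ together with $K_0/K_1\to A_\infty<\infty$ keeps $K_1$ bounded below at the basepoints, hence the rescaled orbit radius $\sqrt{R_i}\,\psi$ bounded, so the limit point lies in a genuine warped-product region, \eqref{eqn-Q} applies there, and the strictly negative reaction term contradicts the final-time space-time maximum.

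The genuine gap is in the case $A_\infty=+\infty$. From $K_1(x_\infty)=0$ you infer $\psi_s^2=1$ at $x_\infty$ and differentiate $K_1$ in $s$ there; but $K_1=(1-\psi_s^2)/\psi^2\to 0$ at the rescaled basepoints can happen in two ways: either $\psi_s^2\to 1$ with $\sqrt{R_i}\,\psi(x_i,t_i)$ bounded, or $\sqrt{R_i}\,\psi(x_i,t_i)\to\infty$ with $\psi_s^2$ staying away from $1$. Nothing in your argument excludes the second scenario, and in it the $SO(3)$-orbits degenerate, the limit near $x_\infty$ is not a warped product over an interval with finite radius, there is no point with $\psi_s^2=1$, and the identity $\frac{d}{ds}K_1=\frac{2\psi_s}{\psi}K_0$ has nothing to act on. The case can be repaired, but by a different mechanism: in either scenario the tangential two-plane still has zero sectional curvature at the interior point $x_\infty$ of a nonflat ancient limit with nonnegative curvature operator, so Hamilton's strong maximum principle forces a local splitting; a $\kappa$-solution that splits is the shrinking round cylinder or a quotient, whose Ricci tensor has a zero eigenvalue at every point, which is incompatible with the eigenvalues $\bigl(2K_0,\,K_0+K_1,\,K_0+K_1\bigr)=\bigl(\tfrac12,\tfrac14,\tfrac14\bigr)$ at $x_\infty$. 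Alternatively, drop both limit arguments and close the proof quantitatively as the paper does, via $\psi\le C\sqrt{T-t}$ and the Riccati inequality in $\tau$.
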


\begin{proof}
  If $(S^3, g(t))$ is isometric to the sphere, then our claim clearly holds.  Hence, we may assume that the rescaled backward limit of our solution is the cylinder.
  
  \sk
  
  We have just observed that at both poles $s_\pm$ we have $Q = 1$.  Hence, $Q_{\max}(t)$ is achieved on the surface
  and $Q_{\max}(t) \geq 1$.  We will now show that $Q_{\max}(t) \leq 1$.
  At a maximum of $Q$ we have $Q_{s} = 0$ and $Q_{ss} \le 0$ so that
  \[
  \frac{dQ_{\max}}{dt} \le - \frac{2}{\psi^2}(Q_{\max} - 1)\, \left((1-\psi_s^2) Q_{\max}^2 + \psi_s^2 (2Q_{\max} + 1)\right).
  \]
  In particular this shows that if $Q_{\max}(t_0) > 1$ for some $t_0 < T$, then the same holds for all $t \le t_0$.
  
  Since the rescaled solution $u(\cdot, \tau)$ converges to $\sqrt{2(n-1)}$ as $\tau\to-\infty$ and it is concave,
  we have $u(\cdot, \tau) \leq C$, for all $\tau\leq \tau_0$.  Hence, $\psi(\cdot, t)\leq C\, \sqrt{T-t}$, for all $t\leq t_0 \ll -1$,
  implying that $Q_{\max}$ as a function of $\tau$, satisfies
  \[
  \frac{d(Q_{\max}-1)}{d\tau} \leq -c \, (Q_{\max} - 1) \left((1-u_\sigma^2) Q_{\max}^2 + u_\sigma^2 (2Q_{\max} + 1)\right),
  \]
  where we have used that $\psi_s=u_\sigma$.
  If $Q_{\max}$ is attained at a point where $u_{\sigma}^2 \le 1/2$, then using that $Q_{\max} > 1$
  \[
  \frac{d}{d\tau} (Q_{\max} - 1) \le -c \, (Q_{\max} - 1) \, Q_{\max}^2 \le -c \, (Q_{\max} - 1)^2,
  \]
  where constant $c$ may change from line to line, but is uniform, independent of $\tau$.  On the other hand, if $Q_{\max}$ is attained at a point where $\frac12 \le u_{\sigma}^2 \le 1$, then
  \begin{equation}
    \label{eq-always}
    \frac{d}{d\tau} (Q_{\max} - 1) \le -c \, (Q_{\max} - 1) (2Q_{\max} +1) \le - c\, (Q_{\max} - 1)^2.
  \end{equation}
  This means that we actually have \eqref{eq-always} holding for all $\tau \le \tau_0$, and thus
  \[
  \frac{d}{d\tau} \Big ( \frac{1}{Q_{\max} - 1} \Big ) \ge c, \qquad \mbox{ for all} \,\,\, \tau \leq \tau_0 \ll -1.
  \]
  As $\tau\to -\infty$ this leads to contradiction with $Q_{\max} - 1 > 0$.
\end{proof}

\begin{corollary}
  \label{cor-max-curv-location}
  For each $\tau$, the rescaled scalar curvature $R(\cdot,\tau)$ achieves its maximum at $\sigma = \pm \sigma(\tau)$.
\end{corollary}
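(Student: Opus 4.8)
The plan is to deduce the corollary from Lemma~\ref{lemma-bound-Q} by showing that $R$ is dominated everywhere by $6K_1$, with equality exactly at the tips, and that $K_1$ is monotone along the profile up to each tip. First I would recall that $R = 4K_0 + 2K_1$, and that the closing conditions \eqref{closing-psi} together with l'Hospital's rule give $K_0(s_\pm,t) = K_1(s_\pm,t)$ (equivalently $Q = 1$ at the tips, as already noted in the excerpt), so that $R = 6K_1$ at $\sigma = \pm\sigma(\tau)$. Since Lemma~\ref{lemma-bound-Q} gives $Q = K_0/K_1 \le 1$, i.e. $K_0 \le K_1$, everywhere on the manifold, we obtain $R \le 6K_1$ everywhere.

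The second ingredient is a one–variable computation. Differentiating \eqref{eq-K1} and using $\psi\psi_{ss} = -\psi^2 K_0$ and $1-\psi_s^2 = \psi^2 K_1$ one finds
\[
\pd_s K_1 = -\frac{2\psi_s}{\psi}\bigl(K_1 - K_0\bigr) = -\frac{2\psi_s}{\psi}\,K_1\,(1-Q).
\]
On the interval $[0,s_+]$ the concavity of $\psi$ (that is, $\psi_{ss}<0$) together with $\psi_s(0)=0$, which follows from reflection symmetry, forces $\psi_s \le 0$; and $1-Q \ge 0$ by Lemma~\ref{lemma-bound-Q}. Hence $\pd_s K_1 \ge 0$ on $[0,s_+]$, so $K_1(s) \le K_1(s_+)$ there. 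Combining the two ingredients gives, for $s \in [0,s_+]$,
\[
R(s,t) \le 6K_1(s,t) \le 6K_1(s_+,t) = R(s_+,t),
\]
and the interval $[s_-,0]$ is handled identically using the reflection symmetry (or, in the general rotationally symmetric case, the analogous estimate on the increasing part of $\psi$, which shows $R$ is maximized at a tip). Since the rescaled metric differs from $g(t)$ by a spatially constant factor, the rescaled scalar curvature differs from $R$ by the same constant factor, so the spatial location of its maximum is unchanged and equals $\sigma=\pm\sigma(\tau)$; this proves the corollary.

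The computation of $\pd_s K_1$ is routine; the only points that need care are tracking the signs of $\psi_s$ and of $K_1-K_0$ on $[0,s_+]$, and observing that the whole argument is scale–invariant so the parabolic rescaling does not move the location of the maximum. Beyond this bookkeeping there is no real obstacle, as the corollary is essentially an immediate consequence of the bound $Q\le 1$ already established.
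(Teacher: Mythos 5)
Your proof is correct and follows essentially the same route as the paper: the bound $Q\le 1$ from Lemma~\ref{lemma-bound-Q} gives both $R\le 6K_1$ and the monotonicity of $K_1$ toward the tips (your identity $\pd_s K_1=-\tfrac{2\psi_s}{\psi}(K_1-K_0)$ is exactly the paper's equivalence between $uu_{\sigma\sigma}-u_\sigma^2+1\ge 0$ and $(K_1)_\sigma\ge 0$ when $u_\sigma\le 0$), and equality $R=6K_1$ at the tips follows from $K_0=K_1$ there. No substantive difference.
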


\begin{proof}
  By Lemma \ref{lemma-bound-Q} we have that ${K_0}/{K_1} \le 1$ on $S^3$, for all $\tau$.  This implies
  \[
  u u_{\sigma\sigma} - u_{\sigma}^2 + 1 \ge 0, \qquad \mbox{which is equivalent to} \qquad (K_1)_{\sigma} \ge 0,
  \]
  for $\sigma \ge 0$, since in that region $u_{\sigma} \le 0$.  Using that observation we conclude that for the scalar curvature, at any point $0 \le \sigma < \sigma_+$ and any $\tau$, we have
  \[
  R(\sigma,\tau) = 4K_0(\sigma,\tau) + 2 K_1(\sigma,\tau) \le 6K_1(\sigma,\tau) \le 6K_1(\sigma_+,\tau) = R(\sigma_+,\tau).
  \]
  The case $\sigma <0$ can be shown similarly.
\end{proof}

\end{document}